\begin{document}

\begin{frontmatter}

\title{Needlet approximation for isotropic random fields on the sphere\tnoteref{tifn}}
\tnotetext[tifn]{This research was supported under the Australian Research Council's \emph{Discovery Projects} DP120101816 and DP160101366.
The first author was supported under the University International Postgraduate Award (UIPA) of UNSW Australia.}

\author[addunsw]{Quoc T. Le Gia\corref{corau}}
\ead{qlegia@unsw.edu.au}
\author[addunsw]{Ian H. Sloan}
\ead{i.sloan@unsw.edu.au}
\author[addunsw,addcityu]{Yu Guang Wang\fnref{fn1}\corref{corau}}
\ead{y.wang@latrobe.edu.au}
\author[addunsw]{Robert S. Womersley}
\ead{r.womersley@unsw.edu.au}

\cortext[corau]{Corresponding author.}

\fntext[fn1]{Present address: Department of Mathematics and Statistics, La Trobe University, Bundoora 3086 VIC, Australia.}

\address[addunsw]{School of Mathematics and Statistics, UNSW Australia, Sydney, NSW, 2052, Australia}
\address[addcityu]{Department of Mathematics, City University of Hong Kong, Tat Chee Avenue, Kowloon Tong, Hong Kong}

\begin{abstract}
In this paper we establish a multiscale approximation for random fields on the sphere using spherical needlets --- a class of spherical wavelets. We prove that the semidiscrete needlet decomposition converges in mean and pointwise senses for weakly isotropic random fields on $\sph{d}$, $d\ge2$. For numerical implementation, we construct a fully discrete needlet approximation of a smooth $2$-weakly isotropic random field on $\sph{d}$ and prove that the approximation error for fully discrete needlets has the same convergence order as that for semidiscrete needlets. Numerical examples are carried out for fully discrete needlet approximations of Gaussian random fields and compared to a discrete version of the truncated Fourier expansion.
\end{abstract}

\begin{keyword}
isotropic random fields\sep sphere\sep needlets\sep Gaussian\sep multiscale
\MSC[2010] 60G60\sep 42C40\sep 41A25\sep 65D32\sep 60G15\sep 33G55
\end{keyword}

\end{frontmatter}

\section{Introduction}\label{sec:intro}
Isotropic random fields on the sphere have application in environmental models and astrophysics, see \cite{CaSt2013,Durrer2008,LaGu1999,RuReMe2013,Stein2007,StChAn2013}.
It is well-known that a $2$-weakly isotropic random field on the unit sphere $\sph{d}$ in $\Rd[d+1]$, $d\ge2$, has a Karhunen-Lo\`{e}ve (K-L) expansion in terms of spherical harmonics, see e.g. \cite{LaSc2015,MaPe2011}.
In this paper we establish a multiscale approximation for spherical random fields using spherical needlets \cite{NaPeWa2006-1,NaPeWa2006-2}. We prove that the semidiscrete needlet approximation converges in both mean and pointwise senses for a weakly isotropic random field on $\sph{d}$. We establish a fully discrete needlet approximation by discretising the needlet coefficient integrals by means of suitable quadrature (numerical integration) rules on $\sph{d}$. We prove that when the field is sufficiently smooth and the quadrature rule has sufficiently high polynomial degree of precision the error of the approximation by fully discrete needlets has the same convergence order as that for semidiscrete needlets. An algorithm for fully discrete needlet approximations is given, and numerical examples using Gaussian random fields are provided.

Let $(\probSp,\mathcal{F},\probm)$ be a probability measure space. For $1\le p\le\infty$, let $\Lpprob[{,\probm}]{p}$ be the $\mathbb{L}_{p}$-space on $\probSp$ with respect to the probability measure $\probm$, endowed with the norm $\norm{\cdot}{\Lpprob{p}}$. Let $\expect{\ranva}$ denote the \emph{expected value} of a random variable $\ranva$ on $(\probSp,\mathcal{F},\probm)$.

\textbf{Random fields.} A \emph{real-valued random field} on the sphere $\sph{d}$ is a function $T:\probSp\times\sph{d}\to\mathbb{R}$. We write $\RF(\omega,\PT{x})$ by $\RF(\PT{x})$ or $\RF(\omega)$ for brevity if no confusion arises.
We say $\RF$ is $2$-weakly isotropic if its expected value and covariance are rotationally invariant.

\emph{Spherical needlets} \cite{NaPeWa2006-1,NaPeWa2006-2} $\needlet{jk}$ are localised polynomials on the sphere associated with a \emph{quadrature rule} and a \emph{filter}; see \eqref{subeqs:fiN}--\eqref{subeqs:needlets} below. The level-$j$ needlet $\needlet{jk}$, $k=1,\dots,N_{j}$, is a spherical polynomial of degree $2^{j}-1$, associated with a positive weight quadrature rule with degree of precision $2^{j+1}-1$.\\[2mm]
\textbf{Main results.}\\[-2mm]

Needlets form a multiscale tight frame for square-integrable functions on $\sph{d}$, see \cite{NaPeWa2006-1,NaPeWa2006-2}. The resulting tight frame has good approximation performance for random fields on the sphere.

\textbf{Needlet decomposition for random fields.} For a random field $\RF$ on $\sph{d}$, the \emph{(semidiscrete) needlet approximation} of order $\neord$ for $\neord\in\Nz$ is defined as
\begin{equation}\label{eq:intro RF via needlet}
  \neapx(\RF;\omega,\PT{x}) :=  \sum_{j=0}^{\neord}\sum_{k=1}^{N_{j}}\InnerL{\RF(\omega),\needlet{jk}}\needlet{jk}(\PT{x}),\quad \omega\in\probSp,\;\PT{x}\in\sph{d}.
\end{equation}

Let $\ceil{\cdot}$ be the ceiling function. Given $1\le p<\infty$ and $d\ge2$, the needlet approximation given by \eqref{eq:intro RF via needlet} of a $\ceil{p}$-weakly isotropic random field $\RF$ on $\sph{d}$ converges to $\RF$ in $\Lppsph{p}{d}$.
(See Theorem~\ref{thm:neapx.Lp.converge.RF}, and see Section~\ref{sec:pre} for the definition of $\ordiRF$-weakly isotropic, $\ordiRF\in\N$.)

In Theorem~\ref{thm:Bd.neapx.Lppsph}, we also prove that when $p$ is a positive integer the needlet approximation on the set of $p$-weakly isotropic random fields is bounded in the $\Lppsph{p}{d}$ sense.

Let $d\ge2$. Let $\RF$ be a $2$-weakly isotropic random field satisfying $\sum_{\ell=1}^{\infty}\; \APS{\ell} \ell^{2s+d-1} < \infty$.
Here $\APS{\ell}$ is the angular power spectrum of $\RF$, see Section~\ref{subsec:angular.power.spectrum}.
Then $\RF(\omega)$ is in the Sobolev space $\sob{2}{s}{d}$ $\probm$-almost surely (or $\Pas$). (This fact was proved in \cite[Section~4]{LaSc2015} and \cite{AnLa2014}; we restate this result in Corollary~\ref{cor:APS.RF.in.sob}.)

\textbf{Semidiscrete needlet approximation.}  In Theorems~\ref{thm:err.mean.L2.neapx} and \ref{thm:err.pointwise.L2.neapx}, we prove that the semidiscrete needlet approximation $\neapx(\RF)$ has the following mean and pointwise approximation errors. Let $s>0$.
Then
\begin{equation}\label{eq:intro.err.mean.L2.neapx}
 \expect{\normb{\RF-\neapx(\RF)}{\Lp{2}{d}}^{2}}
 \le c\: 2^{-2\neord s}\: \expect{\norm{\RF}{\sob{2}{s}{d}}^{2}}.
\end{equation}
The needlet approximation $\neapx(\RF;\omega)$ has the following $\Lp{2}{d}$-error bound: $\Pas$,
\begin{equation}\label{eq:intro.err.pw.L2.neapx}
  \normb{\RF-\neapx(\RF)}{\Lp{2}{d}} \le  c\:{2^{-\neord s}}\:\norm{\RF}{\sob{2}{s}{d}}  < \infty,
\end{equation}
where the constants $c$ in \eqref{eq:intro.err.mean.L2.neapx} and \eqref{eq:intro.err.pw.L2.neapx} depend only on $d$, $s$, $\fiN$ and $\fis$.

\textbf{Fully discrete needlet approximation.} For implementation, we discretise the needlet coefficient $\InnerL{\RF(\omega),\needlet{jk}}=\int_{\sph{d}}\RF(\omega,\PT{x})\needlet{jk}(\PT{x})\IntDiff{x}$ by a positive weight numerical integration rule,
\begin{equation*}
  \InnerD{\RF,\needlet{jk}}:=\sum_{i=1}^{N}\wH \:\RF(\pH{i}) \needlet{jk}(\pH{i}).
\end{equation*}
This then gives the \emph{fully discrete needlet approximation} for the random field $\RF$:
\begin{equation*}
    \disneapx(\RF;\omega,\PT{x}) := \sum_{j=0}^{\neord}\sum_{k=1}^{N_{j}}\InnerD{\RF(\omega),\needlet{jk}} \needlet{jk}(\PT{x}),\quad \omega\in\probSp,\;\PT{x} \in \sph{d}.
\end{equation*}

In Theorems~\ref{thm:err.mean.L2.disneapx} and \ref{thm:err.pointwise.L2.disneapx}, we prove that for $s>d/2$, the fully discrete needlet approximation $\disneapx(\RF)$ has the same mean and pointwise error orders as those for the semidiscrete needlet approximation $\neapx(\RF)$,
cf. \eqref{eq:intro.err.mean.L2.neapx}--\eqref{eq:intro.err.pw.L2.neapx}.
Let $\QH$ be a discretisation quadrature exact for polynomials of degree $3\cdot 2^{\neord-1}-1$. Then
\begin{equation}\label{eq:intro err discrete needlet mean}
 \expect{\normb{\RF-\disneapx(\RF)}{\Lp{2}{d}}^{2}}
 \le c\: 2^{-2\neord s}\: \expect{\norm{\RF}{\sob{2}{s}{d}}^{2}},
\end{equation}
and $\Pas$
\begin{equation}\label{eq:intr.disne.ptw.err}
  \normb{\RF-\disneapx(\RF)}{\Lp{2}{d}}
 \le c\:{2^{-\neord s}}\:\norm{\RF}{\sob{2}{s}{d}} < \infty,
\end{equation}
where the constants $c$ in \eqref{eq:intro err discrete needlet mean} and \eqref{eq:intr.disne.ptw.err} depend only on $d$, $s$, $\fiN$ and $\kappa$.

These results show that the discretisation of the needlet decomposition does not affect the convergence order of the approximation error for a sufficiently smooth random field. This is also supported by numerical examples in Section~\ref{sec:numer.examples}.

In Theorem~\ref{thm:Bd.disneapx.Lppsph}, we prove that when $p$ is a positive integer the fully discrete needlet approximation is bounded on the set of $p$-weakly isotropic random fields in the $\Lppsph{p}{d}$ sense, in a similar way to semidiscrete needlets.

The paper is organised as follows. Section~\ref{sec:pre} makes necessary preparations. Section~\ref{sec:needlet.decomp.RF} shows the convergence of the needlet decomposition for $\ceil{p}$-weakly, $1\le p\le \infty$, isotropic random fields on $\sph{d}$, and proves for $p\in \N$ the boundedness of the semidiscrete needlet approximation on the set of $p$-weakly isotropic random fields in $\Lppsph{p}{d}$. In Section~\ref{sec:truncate err}, we establish a fully discrete needlet approximation for isotropic random fields on the sphere and then prove convergence rates for mean and pointwise approximation errors of semidiscrete and fully discrete needlet approximations for smooth isotropic random fields on $\sph{d}$. In Section~\ref{subsec:Bd.disneapx.Lppsph}, we prove for $p\in\N$ the boundedness of the fully discrete needlet approximation on the set of $p$-weakly isotropic random fields in $\Lppsph{p}{d}$. In Section~\ref{sec:converge.cov.neapx.RF}, we prove convergence rates of variances for both semidiscrete and fully discrete needlet approximations of $2$-weakly isotropic random fields. In Section~\ref{sec:numer.examples}, we give an algorithm for discrete needlet approximations and present numerical examples for Gaussian random fields on $\sph{2}$. We also compare numerically the discrete needlet approximation with a discrete version of the truncated Fourier series approximation of Gaussian random fields.

\section{Preliminaries}\label{sec:pre}
For $d\geq2$, let $\mathbb{R}^{d+1}$ be the real
($d+1$)-dimensional Euclidean space with inner product $\PT{x}\cdot\PT{y}$
for $\PT{x},\PT{y}\in \REuc$ and Euclidean norm
$|\PT{x}|:=\sqrt{\PT{x}\cdot\PT{x}}$. Let
    $\sph{d}:=\{\PT{x}\in\REuc: |\PT{x}|=1\}$
denote the unit sphere of $\REuc$. The sphere $\sph{d}$ forms a compact
metric space, with the metric being the geodesic distance
  $\dist{\PT{x},\PT{y}}:=\arccos(\PT{x}\cdot\PT{y})$ for $\PT{x},\PT{y}\in \sph{d}$.
For $1\le p<\infty$, let $\Lp{p}{d}=\Lp[,\sigma_{d}]{p}{d}$ be the real-valued $\mathbb{L}_{p}$-function space with respect to the normalised Riemann surface measure $\sigma_{d}$ on $\sph{d}$, endowed with the $\mathbb{L}_{p}$ norm
\begin{equation*}
  \norm{f}{\Lp{p}{d}}:=\left\{\int_{\sph{d}}|f(\PT{x})|^{p}\IntDiff{x}\right\}^{1/p},\quad f\in \Lp{p}{d},\;\; 1\le p<\infty.
\end{equation*}
For $p=\infty$, let $\Lp{\infty}{d}:=\ContiSph{}$ be the space of real-valued continuous functions on $\sph{d}$ with uniform norm
\begin{equation*}
\norm{f}{\Lp{\infty}{d}}:= \sup_{\PT{x}\in \sph{d}}|f(\PT{x})|,\quad f\in \ContiSph{}.
\end{equation*}
For $p=2$, $\Lp{2}{d}$ forms a Hilbert space with inner product
  $\InnerL{f,g}:=\int_{\sph{d}}f(\PT{x})g(\PT{x})\IntDiff{x}$ for $f,g\in\Lp{2}{d}$.
Let $\Lppsph{p}{d}:=\Lppsph[,\prodpsphm]{p}{d}$ be the real $\mathbb{L}_{p}$-space on the product space of $\probSp$ and $\sph{d}$ where $\prodpsphm$ is the corresponding product measure.

Let $\varphi$ be a real convex function on $\mathbb{R}$ and let $\ranva$ be a random variable on $\probSp$. We will use Jensen's inequality
\begin{equation}\label{eq:Jensen.ineq.probab}
    \varphi\left(\expect{\ranva}\right) \le \expect{\varphi(\ranva)},
\end{equation}
see e.g. \cite[Eq.~21.14]{Billingsley1995}.

\subsection{Isotropic random fields}
Let $\mathscr{B}(\sph{d})$ denote the Borel algebra on $\sph{d}$ and let $\RotGr$ be the rotation group on $\REuc$.
An $\mathcal{F}\otimes\mathscr{B}(\sph{d})$-measurable function $T:\probSp\times\sph{d}\to\mathbb{R}$ is said to be a \emph{real-valued random field} on the sphere $\sph{d}$. We write $\RF(\omega,\PT{x})$ as $\RF(\PT{x})$ or $\RF(\omega)$ for brevity if no confusion arises.
We say $\RF$ is \emph{strongly isotropic} if for any $k\in\N$ and for all sets of $k$ points $\PT{x}_{1},\cdots,\PT{x}_{k}\in\sph{d}$ and for any rotation $\rho\in \RotGr$, $\RF(\PT{x}_{1}), \dots,\RF(\PT{x}_{k})$ and $\RF(\rho\PT{x}_{1}),\dots,\RF(\rho\PT{x}_{k})$ have the same law. For an integer $\ordiRF\ge1$, we say $\RF$ is $\ordiRF$-\emph{weakly isotropic} if for all $\PT{x}\in \sph{d}$ the $\ordiRF$th moment of $\RF(\PT{x})$ is finite, i.e.
  $\expect{|\RF(\PT{x})|^{\ordiRF}}<\infty$
and if for $k=1,\dots,\ordiRF$, for all sets of $k$ points $\PT{x}_{1},\dots,\PT{x}_{k}$ $\in \sph{d}$ and for any rotation $\rho\in\RotGr$,
\begin{equation*}
  \expect{\RF(\PT{x}_{1})\cdots \RF(\PT{x}_{k})}=\expect{\RF(\rho\PT{x}_{1})\cdots \RF(\rho\PT{x}_{k})},
\end{equation*}
see e.g. \cite{Adler2009,LaSc2015,MaPe2011}.
We define $\RF$ to be an $\infty$-\emph{weakly isotropic} random field if $\RF$ is $\ordiRF$-weakly isotropic for each positive integer $\ordiRF$ and if $\expect{\norm{\RF}{\Lpprob{\infty}}}<\infty$ and if $\RF(\omega)$ is $\Pas$ continuous on $\sph{d}$.

We can also define isotropy for a set of random fields. We say a collection of random fields $\{T_{j}:j\in \Nz\}$ is an \emph{$\ordiRF$-weakly isotropic set} of random fields if $\expect{|T_{j}(\PT{x})|^{\ordiRF}}<\infty$ for every $j\in\Nz$ and for each $\PT{x}\in \sph{d}$, and if for $1\le k\le \ordiRF$, any $k$ different integers $j_{1},\dots,j_{k}$ at most $\ordiRF$, any $k$ points $\PT{x}_{1},\dots,\PT{x}_{k}\in\sph{d}$ and any rotation $\rho\in\RotGr$,
\begin{equation*}
  \expect{T_{j_{1}}(\PT{x}_{1})\cdots T_{j_{k}}(\PT{x}_{k})}=\expect{T_{j_{1}}(\rho\PT{x}_{1})\cdots T_{j_{k}}(\rho\PT{x}_{k})}.
\end{equation*}

We say $\RF$ is a \emph{Gaussian random field} on $\sph{d}$ if the vector $(\RF(\PT{x}_1),\dots,\RF(\PT{x}_{k}))$ has, for each $k\in\N$ and $\PT{x}_{1}, \dots,\PT{x}_{k}\in \sph{d}$, a multivariate Gaussian distribution, see e.g. \cite{Adler2009,LaSc2015,MaPe2011}.

 We have the following properties of isotropic random fields, see e.g. \cite[Proposition~5.10(2)(3), p.~122]{MaPe2011}.
\begin{proposition}\label{prop:isotrRF.weak.strong.Gauss}
(i) Let $\RF$ be an $\infty$-weakly isotropic random field on $\sph{d}$. Let the distribution of $\RF(\PT{x}_{1}),\dots,\RF(\PT{x}_{k})$, for each $k\in \N$ and $\PT{x}_{1},\dots,\PT{x}_{k}$, be determined by the moments of $\RF(\PT{x}_{1}),\dots,\RF(\PT{x}_{k})$. Then $\RF$ is strongly isotropic.\\
(ii) Let $\RF$ be a Gaussian random field on $\sph{d}$. Then $\RF$ is strongly isotropic if and only if $\RF$ is $2$-weakly isotropic.
\end{proposition}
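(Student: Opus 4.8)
The plan is to reduce both parts of Proposition~\ref{prop:isotrRF.weak.strong.Gauss} to two elementary facts about random vectors: that two $\mathbb{R}^{k}$-valued random vectors with identical moments of all orders have the same law provided at least one of them is moment-determinate, and the sharper fact that a Gaussian random vector is completely determined by its mean vector and covariance matrix. Recall that both strong isotropy and $\ordiRF$-weak isotropy are defined purely through finite-dimensional data, so it suffices to argue at the level of the vectors $(\RF(\PT{x}_1),\dots,\RF(\PT{x}_k))$ for arbitrary $k$ and arbitrary points.

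For part (i), fix $k\in\N$, points $\PT{x}_1,\dots,\PT{x}_k\in\sph{d}$ and $\rho\in\RotGr$, and set $\mathbf{V}:=(\RF(\PT{x}_1),\dots,\RF(\PT{x}_k))$ and $\mathbf{V}_\rho:=(\RF(\rho\PT{x}_1),\dots,\RF(\rho\PT{x}_k))$. I would first show that $\mathbf{V}$ and $\mathbf{V}_\rho$ have the same mixed moments of every order. Given exponents $a_1,\dots,a_k\in\Nz$ with $\ell:=a_1+\cdots+a_k\ge1$, apply the definition of $\ell$-weak isotropy --- available since $\RF$ is $\infty$-weakly isotropic --- to the $\ell$-tuple of points in which $\PT{x}_i$ is repeated $a_i$ times; this gives
\begin{equation*}
\expect{\RF(\PT{x}_1)^{a_1}\cdots\RF(\PT{x}_k)^{a_k}}=\expect{\RF(\rho\PT{x}_1)^{a_1}\cdots\RF(\rho\PT{x}_k)^{a_k}},
\end{equation*}
and these moments are finite; the $\ell=0$ case is trivial. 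Hence $\mathbf{V}$ and $\mathbf{V}_\rho$ agree in all moments, and by the standing hypothesis in (i) that the law of such a vector is determined by its moments they are equal in distribution. As $k$, the points and $\rho$ were arbitrary, $\RF$ is strongly isotropic.

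For part (ii), the forward implication requires no Gaussianity: if $\RF$ is strongly isotropic then, specialising the definition to $k=1$ and $k=2$, the laws of $\RF(\PT{x}_1)$ and $\RF(\rho\PT{x}_1)$ coincide and the laws of $(\RF(\PT{x}_1),\RF(\PT{x}_2))$ and $(\RF(\rho\PT{x}_1),\RF(\rho\PT{x}_2))$ coincide; a Gaussian field has finite moments of every order, so in particular its first two moments are rotation invariant, which is precisely $2$-weak isotropy. For the converse, suppose $\RF$ is Gaussian and $2$-weakly isotropic. With notation as in part (i), $\mathbf{V}$ and $\mathbf{V}_\rho$ are then $k$-dimensional Gaussian vectors, so each is determined by its mean and covariance; $2$-weak isotropy gives $\expect{\RF(\PT{x}_i)}=\expect{\RF(\rho\PT{x}_i)}$ and $\expect{\RF(\PT{x}_i)\RF(\PT{x}_j)}=\expect{\RF(\rho\PT{x}_i)\RF(\rho\PT{x}_j)}$ for all $i,j$, whence $\mathbf{V}$ and $\mathbf{V}_\rho$ have the same mean and covariance and hence the same law, giving strong isotropy. (Alternatively, the converse is a corollary of part (i): a $2$-weakly isotropic Gaussian field is $\ordiRF$-weakly isotropic for every $\ordiRF$, since all Gaussian moments are fixed polynomial functions of the mean and covariance, and Gaussian laws are moment-determinate.)

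The only genuinely delicate ingredient is the moment-determinacy invoked in part (i): a law on $\mathbb{R}^{k}$ is in general not pinned down by its moments, so this has to be assumed in the statement rather than derived. In part (ii) there is no such issue, because Gaussian measures are moment-determinate --- in fact determined by the first two moments --- so $2$-weak isotropy already suffices. Everything else is bookkeeping: converting ``products of at most $\ell$ field values'' into ``$\ell$-th order mixed moments'' by listing points with multiplicity, and recalling that strong isotropy is a statement about finite-dimensional distributions only.
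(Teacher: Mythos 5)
Your proof is correct. The paper itself gives no proof of Proposition~\ref{prop:isotrRF.weak.strong.Gauss} --- it is quoted from Marinucci and Peccati --- and your argument (recovering all mixed moments of $(\RF(\PT{x}_1),\dots,\RF(\PT{x}_k))$ by listing points with multiplicity and invoking moment-determinacy for (i), and the mean--covariance characterisation of Gaussian vectors together with finiteness of Gaussian moments for (ii)) is exactly the standard argument behind that cited result. The one point worth making explicit is that you read ``sets of $k$ points'' in the definition of $\ordiRF$-weak isotropy as permitting repeated points; this is the intended reading, and without it the moment-determinacy hypothesis in (i) could not be brought to bear.
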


In this paper, for $p=\infty$ (only), we assume the condition of (i) of Proposition~\ref{prop:isotrRF.weak.strong.Gauss} (that the distribution of $\RF(\PT{x}_{1}),\dots,\RF(\PT{x}_{k})$, for each $k\in \N$ and $\PT{x}_{1},\dots,\PT{x}_{k}$, is determined by the moments of $\RF(\PT{x}_{1}),\dots,\RF(\PT{x}_{k})$), and hence that $\RF$ is strongly isotropic.

\subsection{Spherical harmonics}
A \emph{spherical harmonic} of degree $\ell$ on $\sph{d}$ is the restriction to $\sph{d}$ of a homogeneous and harmonic polynomial of total degree $\ell$ defined on $\REuc$.
Let $\shSp{\ell}$ denote the set of all spherical harmonics of exact degree $\ell$ on $\sph{d}$. The dimension of the linear space $\shSp{\ell}$ is
\begin{equation}\label{eq:dim.sph.harmon}
    Z(d,\ell):=(2\ell+d-1)\frac{\Gamma(\ell+d-1)}{\Gamma(d)\Gamma(\ell+1)}\asymp (\ell+1)^{d-1},
\end{equation}
where $\Gamma(\cdot)$ denotes the gamma function, and $a_{\ell}\asymp
b_{\ell}$ means $c\:b_{\ell}\le a_{\ell}\le c' \:b_{\ell}$ for some
positive constants $c$, $c'$, and the asymptotic estimate uses
\cite[Eq.~5.11.12]{NIST:DLMF}. The linear span of
$\shSp{\ell}$, $\ell=0,1,\dots,L$, forms the space
$\sphpo{L}$ of spherical polynomials of degree at most $L$.

Since each pair $\shSp{\ell}$, $\shSp{\ell'}$ for $\ell\neq \ell'\ge0$ is $\mathbb{L}_{2}$-orthogonal, $\sphpo{L}$ is the direct sum of $\shSp{\ell}$, i.e. $\sphpo{L}=\bigoplus_{\ell=0}^{L} \shSp{\ell}$. Moreover, the infinite direct sum $\bigoplus_{\ell=0}^{\infty} \shSp{\ell}$ is dense in $\Lp{p}{d}$, see e.g. \cite[Ch.1]{WaLi2006}. Each member of $\shSp{\ell}$ is an eigenfunction of the negative Laplace-Beltrami operator $-\LBo$ on the sphere $\sph{d}$ with eigenvalue
\begin{equation}\label{eq:eigenvalue}
  \lambda_{\ell}=\lambda_{\ell}^{(d)}:=\ell(\ell+d-1).
\end{equation}

\subsection{Zonal functions}
Let
$\Jcb{\ell}(t)$, $-1\le t\le1$, be the Jacobi polynomial of degree $\ell$ for $\alpha,\beta>-1$. The Jacobi polynomials form an orthogonal polynomial system with respect to the Jacobi weight
$\Jw(t) := (1-t)^{\alpha}(1+t)^{\beta}$, $-1\le t \le 1$.
Given $1\le p\le \infty$ let $\Lpw{p}$ be the $\mathbb{L}_{p}$ space on $[-1,1]$ with respect to the measure $\Jw(t)\IntD{t}$. The space $\Lpw{2}$ forms a Hilbert space with inner product $\InnerLJcb{f,g}:=\int_{-1}^{1}f(t)g(t)\Jw(t)\IntD{t}$ for $f,g\in\Lpw{2}$.
We denote the normalised Legendre or Gegenbauer
polynomial by
\begin{equation*}
  \NGegen{\ell}(t):=\Jcb[\frac{d-2}{2},\frac{d-2}{2}]{\ell}(t)/\Jcb[\frac{d-2}{2},\frac{d-2}{2}]{\ell}(1).
\end{equation*}
A \emph{zonal function} is a function $K:\sph{d}\times\sph{d}\rightarrow \mathbb{R}$ that depends only on the inner product of the arguments, i.e. $K(\PT{x},\PT{y})= \mathfrak{K}(\PT{x}\cdot\PT{y})$,\: $\PT{x},\PT{y}\in \sph{d}$, for some function $\mathfrak{K}:[-1,1]\to \mathbb{R}$.
From \cite[Theorem~7.32.1, p.~168]{Szego1975}, the zonal function $\NGegen{\ell}(\PT{x}\cdot\PT{y})$ is bounded by
\begin{equation*}
  \bigl|\NGegen{\ell}(\PT{x}\cdot\PT{y})\bigr|\le 1.
\end{equation*}

Let $\{Y_{\ell,m}=Y_{\ell,m}^{(d)}: \ell\ge0,\; m=1,\dots,Z(d,\ell)\}$ be an \emph{orthonormal basis} for the space $\Lp{2}{d}$.
The normalised Legendre polynomial $\NGegen{\ell}(\PT{x}\cdot\PT{y})$ satisfies the \emph{addition theorem}
\begin{equation}\label{eq:addition.theorem}
  \sum_{m=1}^{Z(d,\ell)}Y_{\ell,m}(\PT{x})Y_{\ell,m}(\PT{y})=Z(d,\ell)\NGegen{\ell}(\PT{x}\cdot \PT{y}).
\end{equation}

\subsection{Generalised Sobolev spaces}
Given $s\in\mathbb{R}_{+}$, let
\begin{equation}\label{eq:b_l}
  \bell{s}:=(1+\lambda_{\ell})^{s/2}\asymp (1+\ell)^{s},
\end{equation}
where $\lambda_{\ell}$ is given by \eqref{eq:eigenvalue}. The \emph{Fourier coefficients} for $f$ in $\Lp{1}{d}$ are
\begin{align}\label{eq:Fo coeff}
 \widehat{f}_{\ell m}:=\int_{\sph{d}}f(\PT{x})\shY(\PT{x}) \IntDiff{x}, \;\; \ell\ge0,\: m=1,\dots,\shd.
\end{align}
The \emph{generalised Sobolev space} $\sob{p}{s}{d}$ may be defined as the set of all functions $f\in \Lp{p}{d}$ satisfying
  $\sum_{\ell=0}^{\infty}\bell{s}\sum_{m=1}^{\shd} \widehat{f}_{\ell m} \shY\: \in\: \Lp{p}{d}$.
The Sobolev space $\sob{p}{s}{d}$ forms a Banach space with norm
\begin{equation}\label{eq:sob.norm.sph}
  \norm{f}{\sob{p}{s}{d}}:=\normB{\sum_{\ell=0}^{\infty}\bell{s}\sum_{m=1}^{\shd} \widehat{f}_{\ell m} \shY}{\Lp{p}{d}}.
\end{equation}
Let $\sob{p}{0}{d}:=\Lp{p}{d}$. The Sobolev norm of $f$ in \eqref{eq:sob.norm.sph} has the equivalent representation
\begin{equation}\label{eq:sob.norm.sph.LBO}
  \norm{f}{\sob{p}{s}{d}}\asymp \normb{\sLB{s/2} f}{\Lp{p}{d}}.
\end{equation}

We have the following two embedding lemmas for $\sob{p}{s}{d}$, see \cite[Section~2.7]{Aubin1998} and \cite{Kamzolov1982}. Let $\ContiSph{}$ be the space of continuous functions on $\sph{d}$.
\begin{lemma}[Continuous embedding into $\ContiSph{}$]\label{lm:embed.sob.C} Let $d\ge2$. The Sobolev space $\sob{p}{s}{d}$ is continuously embedded into $\ContiSph{}$ if $s>d/p$.
\end{lemma}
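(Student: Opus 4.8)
The plan is to prove the quantitative estimate $\norm{f}{\ContiSph{}}\le c\,\norm{f}{\sob{p}{s}{d}}$ for every $f\in\sob{p}{s}{d}$ when $s>d/p$; this simultaneously gives that $f$ has a continuous representative and that the inclusion map is bounded. I would first settle the Hilbert-space case $p=2$, which is self-contained. Writing $f=\sum_{\ell\ge0}\sum_{m=1}^{Z(d,\ell)}\widehat{f}_{\ell m}\shY$ and inserting the factor $\bell{-s}\bell{s}=1$, the Cauchy--Schwarz inequality applied termwise gives, for each $\PT{x}\in\sph{d}$,
\[
  |f(\PT{x})|^{2}\le\Bigl(\sum_{\ell\ge0}\bell{-2s}\sum_{m=1}^{Z(d,\ell)}|\shY(\PT{x})|^{2}\Bigr)\Bigl(\sum_{\ell\ge0}\bell{2s}\sum_{m=1}^{Z(d,\ell)}|\widehat{f}_{\ell m}|^{2}\Bigr).
\]
By Parseval's identity applied to \eqref{eq:sob.norm.sph}, the second factor equals $\norm{f}{\sob{2}{s}{d}}^{2}$; in the first factor, the addition theorem \eqref{eq:addition.theorem} taken at $\PT{y}=\PT{x}$ together with $\NGegen{\ell}(1)=1$ turns the inner sum into $Z(d,\ell)$, so the first factor equals $\sum_{\ell\ge0}\bell{-2s}Z(d,\ell)\asymp\sum_{\ell\ge0}(1+\ell)^{d-1-2s}$ by \eqref{eq:dim.sph.harmon}--\eqref{eq:b_l}, a series that converges precisely when $s>d/2$ to a constant $c=c(d,s)$. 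The same bound applied to the tails shows the Fourier series of $f$ converges uniformly, hence $f$ is continuous and $\norm{f}{\ContiSph{}}\le c(d,s)^{1/2}\,\norm{f}{\sob{2}{s}{d}}$.

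For general $1\le p\le\infty$ I would use the Bessel-potential representation. Let $\sLB{s/2}$ be the operator acting as multiplication by $\bell{s}$ on $\shSp{\ell}$; by \eqref{eq:sob.norm.sph.LBO}, $g:=\sLB{s/2}f$ lies in $\Lp{p}{d}$ with $\norm{g}{\Lp{p}{d}}\asymp\norm{f}{\sob{p}{s}{d}}$, and $f=\sLB{-s/2}g$ is the spherical convolution of $g$ with the zonal kernel $G_{s}(\PT{x}\cdot\PT{y}):=\sum_{\ell\ge0}\bell{-s}Z(d,\ell)\,\NGegen{\ell}(\PT{x}\cdot\PT{y})$, i.e. $f(\PT{x})=\int_{\sph{d}}G_{s}(\PT{x}\cdot\PT{y})\,g(\PT{y})\IntDiff{y}$. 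The analytic ingredient --- exactly what is cited from \cite{Kamzolov1982} and \cite[Section~2.7]{Aubin1998} --- is the kernel estimate $|G_{s}(\PT{x}\cdot\PT{y})|\le c\,\dist{\PT{x},\PT{y}}^{s-d}$ for $0<s<d$ (with $G_{s}$ bounded and continuous when $s>d$, and only logarithmically singular when $s=d$). Granting this, Hölder's inequality with conjugate exponent $p'$ gives
\[
  |f(\PT{x})|\le\Bigl(\int_{\sph{d}}|G_{s}(\PT{x}\cdot\PT{y})|^{p'}\IntDiff{y}\Bigr)^{1/p'}\norm{g}{\Lp{p}{d}}\le c\Bigl(\int_{\sph{d}}\dist{\PT{x},\PT{y}}^{(s-d)p'}\IntDiff{y}\Bigr)^{1/p'}\norm{g}{\Lp{p}{d}},
\]
and by rotational invariance of $\sigma_{d}$ the last integral is independent of $\PT{x}$ and, in geodesic polar coordinates about $\PT{x}$, comparable to $\int_{0}^{\pi}r^{(s-d)p'+d-1}\IntD{r}$, which is finite exactly when $(d-s)p'<d$, i.e. $s>d/p$. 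Continuity of $f$ follows because $\PT{x}\mapsto G_{s}(\PT{x}\cdot\,\cdot\,)$ is continuous from $\sph{d}$ into $\Lp{p'}{d}$ under the same condition, so the integral defines a continuous function that agrees with $f$ almost everywhere. (The endpoint $p=\infty$, where $s>d/p$ reads $s>0$, is covered by the same computation with $p'=1$, since $\int_{0}^{\pi}r^{(s-d)+d-1}\IntD{r}=\int_{0}^{\pi}r^{s-1}\IntD{r}<\infty$ iff $s>0$.)

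The main obstacle is the sharp kernel bound used in the general-$p$ step: deducing $|G_{s}(t)|\le c\,(1-t)^{(s-d)/2}$ near $t=1$ directly from the Gegenbauer series requires delicate uniform asymptotics of Jacobi polynomials, or an explicit integral representation of the Bessel potential on $\sph{d}$, so I would invoke \cite{Kamzolov1982,Aubin1998} for it rather than reprove it here. Everything else is routine: the Cauchy--Schwarz and addition-theorem computation for $p=2$, and, for general $p$, the Hölder estimate together with the polar-coordinate integrability check, whose exponent balance $(s-d)p'+d-1>-1$ is exactly the threshold $s>d/p$.
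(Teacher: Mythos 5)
Your proposal is correct, but note that the paper does not actually prove Lemma~\ref{lm:embed.sob.C} at all: it is stated as a known embedding and delegated wholesale to \cite[Section~2.7]{Aubin1998} and \cite{Kamzolov1982}. So the comparison is between your argument and a bare citation. Your $p=2$ case is complete and self-contained: Cauchy--Schwarz against the weights $\bell{\pm s}$, the addition theorem \eqref{eq:addition.theorem} at $\PT{y}=\PT{x}$, Parseval for \eqref{eq:sob.norm.sph}, and the convergence of $\sum_{\ell}(1+\ell)^{d-1-2s}$ exactly when $s>d/2$, with uniform convergence of the tails supplying the continuous representative --- this is the standard and fully elementary route. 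Your general-$p$ case is also sound: the exponent bookkeeping $(s-d)p'+d-1>-1\iff s>d/p$ is exactly the right threshold, and the continuity of $\PT{x}\mapsto G_{s}(\PT{x}\cdot\,\cdot\,)$ into $\Lp{p'}{d}$ correctly yields a continuous representative. The one genuinely nontrivial ingredient you defer --- the pointwise bound $|G_{s}(\PT{x}\cdot\PT{y})|\le c\,\dist{\PT{x},\PT{y}}^{s-d}$ for $0<s<d$ (and indeed the very fact that the Gegenbauer series for $G_{s}$, which is not absolutely convergent in this range, defines a locally integrable zonal kernel) --- is exactly what the cited references establish, so deferring it puts you no further from self-containment than the paper itself. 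In short: you supply an actual proof where the paper supplies none, the $p=2$ half needs nothing external, and the general-$p$ half isolates precisely the right analytic lemma to cite.
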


\begin{lemma}\label{lm:embed.sob.sob} Let $d\ge2$, $s,s'>0$, $1\le p,p'\le\infty$. If $s'-d/p'<s-d/p$, $\sob{p'}{s'}{d}$ is continuously embedded into $\sob{p}{s}{d}$.
\end{lemma}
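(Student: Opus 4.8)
The plan is to recognise the statement as the Sobolev embedding theorem for the full Bessel-potential scale on the compact manifold $\sph{d}$, and to reduce it to the boundedness of a single fractional power of the Laplace--Beltrami operator from $\Lp{p'}{d}$ into $\Lp{p}{d}$; this is the harmonic-analytic form of the classical embedding theorem of \cite[Section~2.7]{Aubin1998} and, in the spherical setting, \cite{Kamzolov1982}.

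First I would use the spectral description of the norm. By \eqref{eq:sob.norm.sph.LBO}, $f$ lies in $\sob{p}{s}{d}$ exactly when $\sLB{s/2}f\in\Lp{p}{d}$, with comparable norms, and likewise for $\sob{p'}{s'}{d}$. Setting $g:=\sLB{s'/2}f$, so that $g$ runs over $\Lp{p'}{d}$ as $f$ runs over $\sob{p'}{s'}{d}$, the embedding inequality $\norm{f}{\sob{p}{s}{d}}\le c\,\norm{f}{\sob{p'}{s'}{d}}$ turns into the single operator bound $\norm{\sLB{(s-s')/2}g}{\Lp{p}{d}}\le c\,\norm{g}{\Lp{p'}{d}}$. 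Thus the entire lemma reduces to showing that $\sLB{(s-s')/2}$ maps $\Lp{p'}{d}$ boundedly into $\Lp{p}{d}$.

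Next I would invoke the mapping properties of this fractional power. Because $s,s'>0$, the order $(s-s')/2$ is a genuine fractional exponent, and the $\Lp{p'}{d}\to\Lp{p}{d}$ boundedness of $\sLB{(s-s')/2}$ on $\sph{d}$ is exactly the content of the Hardy--Littlewood--Sobolev theorem, the admissible pairs $(p',p)$ being fixed by the gap between the two Sobolev indices $s-d/p$ and $s'-d/p'$; the hypothesis of the lemma is precisely this index condition, so the operator bound---and with it the embedding---follows. An equivalent, more constructive route, closer to the needlet machinery of the paper, decomposes $g$ by a smooth dyadic frequency partition, estimates each spherical-polynomial band by Nikolskii's inequality (to exchange $\Lp{p'}{d}$ for $\Lp{p}{d}$) and by the spectral factors of $\sLB{s/2}$ and $\sLB{s'/2}$ (to exchange smoothness for integrability), and sums the resulting geometric series, whose common ratio is again controlled by the same index gap. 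Either way the constant $c$ depends only on $d,s,s',p,p'$.

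The work, and the main obstacle, lies not in the summation but in two ancillary points. One must first verify that the spectrally defined norm \eqref{eq:sob.norm.sph} is equivalent to the covariant-derivative norm used in \cite{Aubin1998,Kamzolov1982}, so that their embedding theorem is genuinely applicable here; on $\sph{d}$ this is a standard but non-trivial Littlewood--Paley equivalence. One must then treat the limiting exponents $p'=1$ and $p=\infty$, where both the Hardy--Littlewood--Sobolev inequality and the reverse-H\"older (Nikolskii) step degenerate and the bound on $\sLB{(s-s')/2}$ has to be read off directly from size estimates for its kernel on the sphere. With the norm equivalence and these endpoint estimates in hand, the index condition of the lemma yields the embedding at once.
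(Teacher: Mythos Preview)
The paper does not prove Lemma~\ref{lm:embed.sob.sob} at all: it is listed among the preliminaries and simply referenced to \cite[Section~2.7]{Aubin1998} and \cite{Kamzolov1982}. So there is no ``paper's own proof'' to compare against; your proposal already goes further than the paper does.

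Your outline---rewrite the embedding as boundedness of $\sLB{(s-s')/2}$ from $\Lp{p'}{d}$ to $\Lp{p}{d}$ via the spectral characterisation \eqref{eq:sob.norm.sph.LBO}, then invoke Hardy--Littlewood--Sobolev or a dyadic Nikolskii argument---is the standard route and is essentially what lies behind the references the paper cites. One genuine issue, however: the index condition in the lemma as printed is the wrong way round. For $\sob{p'}{s'}{d}\hookrightarrow\sob{p}{s}{d}$ one needs the source to be at least as regular as the target, i.e.\ $s'-d/p'\ge s-d/p$ (strict inequality suffices and is what is actually used in the remark after Theorem~\ref{thm:err.mean.L2.disneapx}, where $\alpha-d/p\le s-d/2$). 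Your sentence ``the hypothesis of the lemma is precisely this index condition'' glosses over the fact that the printed hypothesis $s'-d/p'<s-d/p$ is the \emph{negation} of the Hardy--Littlewood--Sobolev condition; with that inequality the operator $\sLB{(s-s')/2}$ is not bounded $\Lp{p'}{d}\to\Lp{p}{d}$ and the embedding is false (take $p=p'$, $s'<s$). So your argument proves the corrected statement, not the one written.
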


\subsection{Needlets and filtered operators} We state some known results for needlets and needlet approximations in this section, which we will use later in the paper. All results can be found in \cite{NaPeWa2006-1,NaPeWa2006-2,WaLeSlWo2016}.

As mentioned, spherical needlets are localised polynomials on $\sph{d}$ associated with a quadrature rule and a filter.
A \emph{filtered kernel} on $\sph{d}$ with filter $\fil$ is, for $R\in \mathbb{R}_{+}$,
\begin{equation}\label{eq:filter.sph.ker}
  \vdh{R,\fil}(\PT{x}\cdot\PT{y}):= \vdh[d]{R,\fil}(\PT{x}\cdot\PT{y}) :=\begin{cases}
  1, & 0\le R<1,\\[1mm]
  \displaystyle\sum_{\ell=0}^{\infty}\fil\Bigl(\frac{\ell}{R}\Bigr)\:Z(d,\ell)\:\NGegen{\ell}(\PT{x}\cdot\PT{y}), & R\ge1.
  \end{cases}
\end{equation}

Given $N\ge1$, for $k=1,\dots,N$, let $\PT{x}_{k}$ be $N$ nodes on
$\sph{d}$ and let $w_{k}>0$ be the corresponding weights. The set
$\{(w_{k},\PT{x}_{k}): k=1,\dots,N\}$ is a \emph{positive quadrature
(numerical integration) rule} exact for polynomials of degree up to  $L$
for some $L\ge0$ if
\begin{equation*}
  \int_{\sph{d}}\polysph(\PT{x})\IntDiff{x}=\sum_{k=1}^{N}w_{k}\:\polysph(\PT{x}_{k}),\quad \mbox{for~all~} \polysph\in\sphpo{L}.
\end{equation*}

Let the \emph{needlet filter} $\fiN$ be a filter with specified smoothness $\fis\ge 1$ satisfying
\begin{subequations}\label{subeqs:fiN}
  \begin{align}
  &\fiN\in \CkR, \quad \supp \fiN = [1/2,2];\label{eq:fiN-a}\\[1mm]
  &\fiN(t)^{2} + \fiN(2t)^{2} = 1 \;\hbox{~if~} t\in [1/2,1].\label{eq:fiN-b}
  \end{align}
\end{subequations}
For $j=0,1,\dots$, we define the \emph{(spherical) needlet quadrature}
\begin{equation}\label{eq:QN}
\begin{array}{l}
    \{(\wN,\pN{jk}):k=1,\dots,N_{j}\}, \quad\wN>0, \; k=1,\dots,N_{j},\\
    \hbox{exact for polynomials of degree up to $2^{j+1}-1$}.
\end{array}
\end{equation}
A \emph{(spherical) needlet} $\needlet{jk}$, $k=1,\dots,N_{j}$ of order $j$ with needlet filter $\fiN$ and needlet quadrature \eqref{eq:QN} is then defined by
\begin{subequations}\label{subeqs:needlets}
\begin{equation}\label{eq:needlets-a}
  \needlet{jk}(\PT{x}) :=
  \sqrt{\wN}\: \vdh{2^{j-1},\fiN}(\PT{x}\cdot\PT{x}_{jk}),
\end{equation}
or equivalently, $\needlet{0k}(\PT{x}):=\sqrt{w_{0k}}$,
\begin{align}\label{eq:needlets-b}
  \needlet{jk}(\PT{x}) :=
  \displaystyle\sqrt{\wN}\: \sum_{\ell=0}^{\infty}\fiN\Bigl(\frac{\ell}{2^{j-1}}\Bigr)\:Z(d,\ell)\:\NGegen{\ell}(\PT{x}\cdot\pN{jk}),
  \;\; \hbox{if~} j\ge1.
\end{align}
\end{subequations}
From \eqref{eq:fiN-a} we see that $\needlet{jk}$ is a polynomial of degree $2^{j}-1$. It is a band-limited polynomial, so that $\needlet{jk}$ is $\mathbb{L}_{2}$-orthogonal to all polynomials of degree $\le 2^{j-2}$.
For $f\in \Lp{1}{d}$, the original \emph{(spherical) needlet approximation} with filter $\fiN$ and needlet quadrature \eqref{eq:QN} is defined (see \cite{NaPeWa2006-1}) by
\begin{equation}\label{eq:neapx.f}
  \neapx(f;\PT{x}):=\sum_{j=0}^{\neord}\sum_{k=1}^{N_{j}}\InnerL{f,\needlet{jk}} \needlet{jk}(\PT{x}),\quad \PT{x}\in\sph{d}.
\end{equation}
Thus $\neapx(f;\PT{x})$ is a spherical polynomial of degree at most $2^{\neord}-1$.

As in \cite{WaLeSlWo2016}, we introduce the filter $\fiH$ related to the needlet filter $\fiN$:
\begin{equation}\label{eq:fiH}
\fiH(t):=\left\{\begin{array}{ll}
1, & 0\le t<1,\\
\fiN(t)^{2}, & t\ge1,
\end{array}\right.
\end{equation}
and use the property
\begin{equation}\label{eq:fiH.fiN}
  \fiH\left(\frac{t}{2^{\neord}}\right)=\sum_{j=0}^{\neord}\fiN\left(\frac{t}{2^{j}}\right)^{2}, \quad t\ge1,\;\neord\in\mathbb{Z}_{+},
\end{equation}
which is an easy consequence of \eqref{subeqs:fiN}. We note that \eqref{eq:fiH.fiN} implies $\fiH\in\CkR$ given $\fiN\in\CkR$.

The following theorem shows that an appropriate sum of products of needlets is exactly a filtered kernel. It is proved in \cite[Theorem~3.9]{WaLeSlWo2016} and is already implicit in \cite{NaPeWa2006-1}.
\begin{theorem}[Needlets and filtered kernel]\label{thm:needlets.vs.filter.sph.ker} Let $d\ge2$ and let $\fiN$ be a needlet filter, see \eqref{subeqs:fiN}, and let $\fiH$ be given by \eqref{eq:fiH}. For $j\ge0$ and $1\le k\le N_{j}$, let $\needlet{jk}$ be needlets with filter $\fiN$ and needlet quadrature \eqref{eq:QN}. Then,
\begin{align}
  \sum_{k=1}^{N_{j}}\needlet{jk}(\PT{x})\:\needlet{jk}(\PT{y})
  &=  \vdh{2^{j-1},\fiN^{2}}(\PT{x}\cdot\PT{y}),\;\; j\ge0,\label{eq:needlets.vs.filter.sph.ker.fiN}\\
    \sum_{j=0}^{\neord}\sum_{k=1}^{N_{j}}\needlet{jk}(\PT{x})\:\needlet{jk}(\PT{y})&=\vdh{2^{\neord-1},\fiH}(\PT{x}\cdot\PT{y}),\;\; \neord\ge0.\notag
  \end{align}
\end{theorem}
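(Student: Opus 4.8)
The plan is to establish the first identity \eqref{eq:needlets.vs.filter.sph.ker.fiN} directly from the definition of needlets and the exactness of the needlet quadrature, and then to deduce the second identity from the first by summing over $j$ and invoking the filter relation \eqref{eq:fiH.fiN}.

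First I would handle the base case $j=0$ separately. By \eqref{eq:needlets-a} with the convention $\needlet{0k}(\PT{x})=\sqrt{w_{0k}}$, the sum $\sum_{k=1}^{N_{0}}\needlet{0k}(\PT{x})\needlet{0k}(\PT{y}) = \sum_{k=1}^{N_{0}}w_{0k} = \int_{\sph{d}}\IntDiff{x} = 1$, since the level-$0$ quadrature is exact for constants and $\sigma_{d}$ is normalised. On the other hand $\vdh{2^{-1},\fiN^{2}}(\PT{x}\cdot\PT{y})=1$ because $2^{-1}<1$ puts us in the first branch of \eqref{eq:filter.sph.ker}. So the $j=0$ case holds.

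For $j\ge1$, I would start from \eqref{eq:needlets-b}, write out the product $\needlet{jk}(\PT{x})\needlet{jk}(\PT{y})$ as a double sum over $\ell,\ell'$ of terms $\wN\,\fiN(\ell/2^{j-1})\fiN(\ell'/2^{j-1})Z(d,\ell)Z(d,\ell')\NGegen{\ell}(\PT{x}\cdot\pN{jk})\NGegen{\ell'}(\pN{jk}\cdot\PT{y})$, and then sum over $k$. Since $\fiN$ is supported on $[1/2,2]$, only indices with $\ell,\ell'\le 2^{j}$ contribute, so the function $\PT{z}\mapsto \NGegen{\ell}(\PT{x}\cdot\PT{z})\NGegen{\ell'}(\PT{z}\cdot\PT{y})$ (as a function of $\PT{z}$) is a spherical polynomial of degree at most $2^{j}+2^{j}=2^{j+1}$; actually one should be slightly careful here and note that the relevant degree bound is $2^{j+1}-1$ or less because $\fiN(\ell/2^{j-1})=0$ when $\ell/2^{j-1}=2$, i.e.\ when $\ell = 2^{j}$, wait --- $\fiN$ is continuous with $\supp\fiN=[1/2,2]$, so $\fiN(2)=0$ and the largest contributing index is $\ell = 2^{j}-1$ (the largest integer strictly less than $2^{j}$ satisfying $\ell/2^{j-1}<2$), giving total degree at most $2(2^{j}-1)=2^{j+1}-2 \le 2^{j+1}-1$. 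Hence the needlet quadrature \eqref{eq:QN}, which is exact for degree $2^{j+1}-1$, applies: $\sum_{k}\wN\,\NGegen{\ell}(\PT{x}\cdot\pN{jk})\NGegen{\ell'}(\pN{jk}\cdot\PT{y}) = \int_{\sph{d}}\NGegen{\ell}(\PT{x}\cdot\PT{z})\NGegen{\ell'}(\PT{z}\cdot\PT{y})\IntDiff{z}$. Now I would use the addition theorem \eqref{eq:addition.theorem} to write each $Z(d,\ell)\NGegen{\ell}(\PT{x}\cdot\PT{z}) = \sum_{m}Y_{\ell,m}(\PT{x})Y_{\ell,m}(\PT{z})$, and the $\mathbb{L}_{2}$-orthonormality of $\{Y_{\ell,m}\}$ to collapse the integral over $\PT{z}$: the cross-terms vanish unless $\ell=\ell'$, and for $\ell=\ell'$ we get back $\sum_{m}Y_{\ell,m}(\PT{x})Y_{\ell,m}(\PT{y}) = Z(d,\ell)\NGegen{\ell}(\PT{x}\cdot\PT{y})$. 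After reindexing, the double sum collapses to $\sum_{\ell}\fiN(\ell/2^{j-1})^{2}Z(d,\ell)\NGegen{\ell}(\PT{x}\cdot\PT{y})$, which is exactly $\vdh{2^{j-1},\fiN^{2}}(\PT{x}\cdot\PT{y})$ by \eqref{eq:filter.sph.ker} with filter $\fiN^{2}$ and $R=2^{j-1}\ge1$.

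For the second identity, I would sum \eqref{eq:needlets.vs.filter.sph.ker.fiN} over $j=0,\dots,\neord$. The $j=0$ term contributes $1 = \fiN(\ell/2^{-1})^{2}$-type constant; more cleanly, for $\neord=0$ both sides equal $1$. For $\neord\ge1$, summing the right-hand sides gives $\sum_{j=0}^{\neord}\vdh{2^{j-1},\fiN^{2}}(\PT{x}\cdot\PT{y})$; expanding each via \eqref{eq:filter.sph.ker} and interchanging the (finite, for fixed $\PT{x},\PT{y}$ the sums are effectively finite since only $\ell\le 2^{\neord}$ contribute) sums over $j$ and $\ell$ yields $\sum_{\ell}\big(\sum_{j=0}^{\neord}\fiN(\ell/2^{j-1})^{2}\big)Z(d,\ell)\NGegen{\ell}(\PT{x}\cdot\PT{y})$, where for $\ell=0$ the bracket should be read as $1$ to match the convention. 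Here one must be slightly careful to align the indexing in \eqref{eq:fiH.fiN}, which reads $\fiH(t/2^{\neord}) = \sum_{j=0}^{\neord}\fiN(t/2^{j})^{2}$; substituting $t=2\ell$ converts this to $\fiH(\ell/2^{\neord-1}) = \sum_{j=0}^{\neord}\fiN(\ell/2^{j-1})^{2}$, exactly the bracket above. Therefore the sum equals $\sum_{\ell}\fiH(\ell/2^{\neord-1})Z(d,\ell)\NGegen{\ell}(\PT{x}\cdot\PT{y}) = \vdh{2^{\neord-1},\fiH}(\PT{x}\cdot\PT{y})$, completing the proof.

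The main obstacle is purely bookkeeping rather than conceptual: one has to be scrupulous about (a) the degree count, making sure the polynomial $\PT{z}\mapsto\NGegen{\ell}(\PT{x}\cdot\PT{z})\NGegen{\ell'}(\PT{z}\cdot\PT{y})$ really has degree within the exactness range $2^{j+1}-1$ of the needlet quadrature (this is where the precise support $\supp\fiN=[1/2,2]$ and the factor $2^{j-1}$ in \eqref{eq:needlets-b} interlock), and (b) the handling of the $\ell=0$ term and the $j=0$ level, where the piecewise definitions in \eqref{eq:filter.sph.ker}, \eqref{eq:fiH}, and the conventions $\needlet{0k}=\sqrt{w_{0k}}$ all need to be reconciled. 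Everything else is a direct computation using the addition theorem and quadrature exactness.
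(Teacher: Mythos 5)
Your argument is correct. The paper itself gives no proof of this theorem --- it is quoted from \cite[Theorem~3.9]{WaLeSlWo2016} --- but your route (quadrature exactness for degree $2^{j+1}-1$ applied to $\PT{z}\mapsto\NGegen{\ell}(\PT{x}\cdot\PT{z})\NGegen{\ell'}(\PT{z}\cdot\PT{y})$, the addition theorem plus orthonormality to collapse to the diagonal $\ell=\ell'$, then the telescoping identity \eqref{eq:fiH.fiN} with $t=2\ell$) is the standard proof and is the one the cited reference uses. The only place you are slightly loose is the reconciliation of the level-$0$ contribution with the $\ell=0$ term: the clean statement is that the constant $1$ from $j=0$ equals the $\ell=0$ term $\fiH(0)Z(d,0)\NGegen{0}(\PT{x}\cdot\PT{y})=1$ of $\vdh{2^{\neord-1},\fiH}$, while for $\ell\ge1$ the $j=0$ summand $\fiN(2\ell)^{2}$ in \eqref{eq:fiH.fiN} vanishes anyway since $\fiN(t)=0$ for $t\ge2$; with that noted, everything closes up.
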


We may define a \emph{filtered approximation} $\Vdh{R,\fil}(f;\cdot)$ on $\Lp{1}{d}$, $R\ge0$ via an integral operator with the filtered kernel $\vdh{R,\fil}(\PT{x}\cdot\PT{y})$ given by \eqref{eq:filter.sph.ker}: for $f\in \Lp{1}{d}$ and $\PT{x}\in\sph{d}$,
\begin{equation}\label{eq:filter.sph.approx}
  \Vdh{R,\fil}(f;\PT{x}):= \Vdh[d]{R,\fil}(f;\PT{x}):= \InnerL{f,\vdh{R,\fil}(\PT{x}\cdot\cdot)}
  =\int_{\sph{d}} f(\PT{y})\:\vdh{R,\fil}(\PT{x}\cdot\PT{y})\:\IntDiff{y}.
\end{equation}
Note that for $R<1$ this is just the integral of $f$.

Theorem~\ref{thm:needlets.vs.filter.sph.ker} with
\eqref{eq:filter.sph.approx} leads to the following equivalence of the
filtered approximation with filter $\fiH$ and the needlet
approximation in \eqref{eq:neapx.f}.
\begin{theorem}\label{thm:needlets.vs.filter.approx} Under the assumptions of Theorem~\ref{thm:needlets.vs.filter.sph.ker}, for $f\in \Lp{1}{d}$ and $\neord\in\Nz$,
\begin{equation*}
  \neapx(f;\PT{x}) =  \int_{\sph{d}} f(\PT{x})\:\vdh{2^{\neord-1},\fiH}(\PT{x}\cdot\PT{y})\:\IntDiff{y} = \fihyper{2^{\neord-1},\fiH}(f;\PT{x}),\quad \PT{x}\in\sph{d}.
\end{equation*}
\end{theorem}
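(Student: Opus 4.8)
The plan is to identify the needlet approximation $\neapx(f;\PT{x})$ as given by \eqref{eq:neapx.f} with the filtered approximation $\fihyper{2^{\neord-1},\fiH}(f;\PT{x})$ by substituting the frame identity from Theorem~\ref{thm:needlets.vs.filter.sph.ker}. First I would write out $\neapx(f;\PT{x})$ from \eqref{eq:neapx.f} and expand the inner product $\InnerL{f,\needlet{jk}}$ as $\int_{\sph{d}} f(\PT{y})\needlet{jk}(\PT{y})\IntDiff{y}$, so that
\[
  \neapx(f;\PT{x}) = \sum_{j=0}^{\neord}\sum_{k=1}^{N_{j}} \left(\int_{\sph{d}} f(\PT{y})\needlet{jk}(\PT{y})\IntDiff{y}\right)\needlet{jk}(\PT{x}).
\]
The key step is then to interchange the (finite) sums over $j$ and $k$ with the integral over $\PT{y}$ — which is justified since each needlet is a bounded polynomial and $f\in\Lp{1}{d}$, so the integrand is in $\Lp{1}{d}$ — and to recognise the resulting kernel $\sum_{j=0}^{\neord}\sum_{k=1}^{N_{j}}\needlet{jk}(\PT{x})\needlet{jk}(\PT{y})$ as exactly $\vdh{2^{\neord-1},\fiH}(\PT{x}\cdot\PT{y})$ by the second identity of Theorem~\ref{thm:needlets.vs.filter.sph.ker}.

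Having made that substitution, I obtain $\neapx(f;\PT{x}) = \int_{\sph{d}} f(\PT{y})\,\vdh{2^{\neord-1},\fiH}(\PT{x}\cdot\PT{y})\,\IntDiff{y}$, and then invoke the definition \eqref{eq:filter.sph.approx} of the filtered approximation $\Vdh{R,\fil}(f;\cdot)$ with $R=2^{\neord-1}$ and $\fil=\fiH$ to conclude that this integral equals $\fihyper{2^{\neord-1},\fiH}(f;\PT{x})$. The case $\neord=0$ should be checked separately but is immediate: then $2^{\neord-1}=1/2<1$, so $\vdh{2^{\neord-1},\fiH}\equiv 1$ and both sides reduce to $\int_{\sph{d}}f(\PT{y})\IntDiff{y} = \needlet{0k}\cdot\InnerL{f,\needlet{0k}}$ summed appropriately, consistent with \eqref{eq:needlets-a} and the convention $\needlet{0k}=\sqrt{w_{0k}}$ together with the exactness of the degree-$1$ quadrature.

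The only genuine subtlety — and hence the main point needing care rather than a true obstacle — is the justification of the interchange of summation and integration: one wants the convergence/finiteness to hold in the $\Lp{1}{d}$ sense so that the identity is valid for all $f\in\Lp{1}{d}$ and not merely for continuous or polynomial $f$. Since for each fixed $\neord$ the double sum over $j\le\neord$, $k\le N_{j}$ is finite and $\vdh{2^{\neord-1},\fiH}(\PT{x}\cdot\cdot)$ is a fixed spherical polynomial (bounded uniformly on $\sph{d}$), there is no convergence issue at all; the interchange is just linearity of the integral applied finitely many times. Thus the proof is essentially a one-line chain of equalities once Theorems~\ref{thm:needlets.vs.filter.sph.ker} and the definition \eqref{eq:filter.sph.approx} are in hand.
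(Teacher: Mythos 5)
Your proposal is correct and follows exactly the route the paper intends: the paper presents this theorem as an immediate consequence of Theorem~\ref{thm:needlets.vs.filter.sph.ker} combined with the definition \eqref{eq:filter.sph.approx}, which is precisely your chain of substituting the kernel identity into \eqref{eq:neapx.f} and interchanging the finite sums with the integral. Your extra remarks on the triviality of the interchange and the $\neord=0$ case are sound (and you correctly read $f(\PT{y})$ where the displayed statement has a typo $f(\PT{x})$).
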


When the filter is sufficiently smooth, the filtered kernel is strongly localised. This is shown in the following theorem proved by Narcowich et al. \cite[Theorem~3.5, p.~584]{NaPeWa2006-2}.
\begin{theorem}[\cite{NaPeWa2006-2}]\label{thm:filter.sph.ker.UB} Let $d\ge2$ and let $\fil$ be a filter in $\CkR$ with $1\leq \fis<\infty$ such that $\fil$ is constant on $[0,a]$ for some $0<a<2$.
Then,
\begin{equation*}
    \bigl|\vdh{R,\fil}(\cos\theta)\bigr|\leq \frac{c \:R^{d}}{(1 + R \theta)^{\fis}}, \quad R\geq1,
\end{equation*}
where $\cos\theta=\PT{x}\cdot\PT{y}$ for some $\PT{x},\PT{y}\in\sph{d}$ and the constant $c$ depends only on $d$, $\fil$ and $\fis$.
\end{theorem}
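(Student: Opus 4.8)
The plan is to reduce the bound to a one-dimensional summation-by-parts argument in the degree $\ell$, in which the smoothness of $\fil$ is traded for decay in $R\theta$. First I would record that the defining sum is finite: since the filters to which the theorem is applied are compactly supported and $\fil$ is constant on $[0,a]$, only the $O(R)$ indices with $\ell\le cR$ contribute, and on these $\ell\asymp R$. Split the range of $\theta$ at $R\theta=1$. On the near range $0\le R\theta\le 1$ the bound is immediate from the crude estimates $\bigl|\NGegen{\ell}(\cos\theta)\bigr|\le 1$ and $Z(d,\ell)\asymp(\ell+1)^{d-1}$ (see \eqref{eq:dim.sph.harmon}): summing over the $O(R)$ active terms gives $\bigl|\vdh{R,\fil}(\cos\theta)\bigr|\le c\,R\cdot R^{d-1}=c\,R^{d}$, and since $1+R\theta\le 2$ here this is already of the claimed form $c\,R^{d}(1+R\theta)^{-\fis}$.

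The substance is the far range $R\theta>1$, where cancellation must be extracted. Writing $D_{\ell}(\theta):=Z(d,\ell)\,\NGegen{\ell}(\cos\theta)$ and $g(\ell):=\fil(\ell/R)$, I would apply Abel summation (summation by parts) $\fis$ times in $\ell$, obtaining
\[
  \vdh{R,\fil}(\cos\theta)\;=\;\sum_{\ell} g(\ell)\,D_{\ell}(\theta)\;=\;\sum_{\ell}\bigl(\Delta^{\fis} g\bigr)(\ell)\;S^{(\fis)}_{\ell}(\theta),
\]
where $\Delta$ is the forward difference in $\ell$ and $S^{(\fis)}_{\ell}$ is the $\fis$-fold iterated partial sum of the projection kernels $D_{\ell}$. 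All boundary terms vanish, since $g$ is constant for $\ell\le aR$ (constancy of $\fil$ near $0$) and vanishes for large $\ell$ (compact support). Because $\fil\in\CkR$, applying the mean value theorem $\fis$ times to the samples $\fil(\ell/R)$ gives $\bigl|(\Delta^{\fis} g)(\ell)\bigr|\le c\,R^{-\fis}\bigl\|\fil^{(\fis)}\bigr\|_{\infty}$.

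The crux, which I expect to be the main obstacle, is a bound on the iterated kernels $S^{(\fis)}_{\ell}(\theta)$ for $\theta$ bounded below by $1/R$. Here I would invoke Szeg\H{o}'s asymptotics for the Jacobi polynomials of parameter $\tfrac{d-2}{2}$ underlying $\NGegen{\ell}$ (see \cite{Szego1975}): away from $\theta=0$ these carry an oscillatory factor of the form $\cos\bigl((\ell+\tfrac{d-1}{2})\theta-\tfrac{(d-1)\pi}{4}\bigr)$, so that each successive partial summation over the $O(R)$ active indices contributes a gain of order $\theta^{-1}$ (a Dirichlet-kernel estimate $\bigl|1-e^{i\theta}\bigr|^{-1}\asymp\theta^{-1}$), while the slowly varying amplitude contributes only lower-order corrections under summation. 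Propagating this through the $\fis$ iterations improves the trivial size of $S^{(\fis)}_{\ell}$ by a factor $(R\theta)^{-\fis}$, yielding $\bigl|S^{(\fis)}_{\ell}(\theta)\bigr|\le c\,R^{d-1}\theta^{-\fis}$ uniformly over the active indices. The delicate point is the bookkeeping that cleanly separates the genuine oscillatory gain from the slow variation of the amplitude, so as to obtain exactly the power $\fis$ with the correct dependence on $R$; a careful version in fact produces an even stronger bound, of which the stated one is a consequence.

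Combining the difference estimate, the kernel estimate, and the $O(R)$ count of active indices,
\[
  \bigl|\vdh{R,\fil}(\cos\theta)\bigr|\le c\,R\cdot R^{-\fis}\cdot R^{d-1}\theta^{-\fis}=c\,R^{d}\,(R\theta)^{-\fis},
\]
and since $R\theta>1$ gives $(R\theta)^{-\fis}\asymp(1+R\theta)^{-\fis}$, this agrees with the near-range estimate. The two ranges together give $\bigl|\vdh{R,\fil}(\cos\theta)\bigr|\le c\,R^{d}(1+R\theta)^{-\fis}$ with $c$ depending only on $d$, $\fil$ and $\fis$, as required.
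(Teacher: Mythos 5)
\textbf{On your proposal for Theorem~\ref{thm:filter.sph.ker.UB}.}
A point of reference first: the paper itself gives no proof of this statement --- it is imported verbatim from Narcowich--Petrushev--Ward \cite{NaPeWa2006-2}, so your attempt can only be measured against the proof in that reference. Your outline reproduces its skeleton faithfully: the split at $R\theta=1$, with the near range handled by the trivial bounds $\bigl|\NGegen{\ell}(\cos\theta)\bigr|\le 1$ and $Z(d,\ell)\asymp(\ell+1)^{d-1}$; iterated Abel summation in $\ell$ on the far range; the finite-difference estimate $\bigl|\Delta^{\fis}g(\ell)\bigr|\le c\,R^{-\fis}\bigl\|\fil^{(\fis)}\bigr\|_{\infty}$ from the mean value theorem; the observation that constancy of $\fil$ near $0$ and compact support kill the boundary terms and leave only $O(R)$ active indices; and the final assembly $R\cdot R^{-\fis}\cdot R^{d-1}\theta^{-\fis}=R^{d}(R\theta)^{-\fis}$. (You also, correctly, reinstate the compact-support hypothesis on $\fil$, which the paper's statement leaves implicit but which is needed even for the kernel to be a finite sum.)

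The genuine gap is the step you yourself flag as the crux: the estimate $\bigl|S^{(\fis)}_{\ell}(\theta)\bigr|\le c\,R^{d-1}\theta^{-\fis}$ carries the entire analytic content of the theorem, and your mechanism for it --- ``each successive partial summation contributes a Dirichlet-kernel gain $\theta^{-1}$'' --- does not survive iteration as stated. The first summation is indeed controlled exactly: by the Jacobi sum rule, $S^{(1)}_{L}$ is a constant multiple of $\tfrac{\Gamma(L+d)}{\Gamma(L+d/2)}\,P_{L}^{(d/2,(d-2)/2)}(\cos\theta)$, and Szeg\H{o}'s bound $\bigl|P_{L}^{(\alpha,\beta)}(\cos\theta)\bigr|\le c\,L^{-1/2}\theta^{-\alpha-1/2}$ delivers the gain. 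But to repeat the argument one must sum these expressions in $L$, and their amplitude $\tfrac{\Gamma(L+d)}{\Gamma(L+d/2)}$ is off by a factor $\asymp L$ from the weights the sum rule demands at the raised parameter; in addition, the non-oscillatory $O((L\theta)^{-1})$ remainders in Szeg\H{o}'s asymptotics gain nothing from further summation and accumulate over the $\fis$ generations. Taming exactly this is why the actual proof does not iterate oscillatory sums: one identifies the $k$-fold iterated partial sum with a normalised Ces\`aro kernel, $S^{(k)}_{\ell}=A_{\ell}^{k-1}K_{\ell}^{k-1}$ with $A_{\ell}^{k-1}=\binom{\ell+k-1}{\ell}$, and invokes the classical pointwise bounds for Ces\`aro/Jacobi kernels built on \cite{Szego1975}; that lemma is what \cite{NaPeWa2006-2} in essence rests on. Your claimed intermediate bound is true (it is in fact weaker than the sharp Ces\`aro bound), but until it is proved or properly cited, what you have is the correct plan with its central lemma missing, not a proof.
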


When the filter is sufficiently smooth, the $\mathbb{L}_{1}$-norm of the filtered kernel and the operator norm of
the filtered approximation $\Vdh{R,\fil}$ on $\Lp{p}{d}$ are both bounded, see \cite{WaLeSlWo2016}.
\begin{theorem}[\cite{WaLeSlWo2016}]\label{thm:filter.sph.ker.L1norm.UB} Let $d\ge2$ and let $\fil$ be a filter in $\CkR$ with $\fis\ge \floor{\frac{d+3}{2}}$ such that $\fil$ is constant on $[0,a]$ for some $0<a<2$. Then
\begin{equation*}
    \normb{\vdh{R,\fil}(\PT{x}\cdot\cdot)}{\Lp{1}{d}}\le c_{d,\fil,\fis},\quad \PT{x}\in \sph{d},\;R\ge0.
\end{equation*}
\end{theorem}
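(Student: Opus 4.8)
The plan is to reduce the surface integral to a one-dimensional integral in the polar angle and then estimate it by splitting the angle into three regimes. Since $\vdh{R,\fil}(\PT{x}\cdot\PT{y})$ is zonal, writing $\PT{x}\cdot\PT{y}=\cos\theta$ and integrating out the latitude gives, with $c_{d}^{-1}=\int_{0}^{\pi}\sin^{d-1}\theta\,\IntD{\theta}$,
\begin{equation*}
  \normb{\vdh{R,\fil}(\PT{x}\cdot\cdot)}{\Lp{1}{d}}
  = c_{d}\int_{0}^{\pi}\bigl|\vdh{R,\fil}(\cos\theta)\bigr|\,\sin^{d-1}\theta\,\IntD{\theta},
\end{equation*}
so the bound is automatically independent of $\PT{x}$. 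For $0\le R<1$ the kernel is identically $1$ and the norm equals $1$, so I may assume $R\ge1$. First I would split $[0,\pi]$ into a peak region $[0,1/R]$, a bulk region $[1/R,\pi-1/R]$, and an antipodal region $[\pi-1/R,\pi]$.

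On the peak region I would use the localisation bound of Theorem~\ref{thm:filter.sph.ker.UB}, namely $\bigl|\vdh{R,\fil}(\cos\theta)\bigr|\le cR^{d}(1+R\theta)^{-\fis}\le cR^{d}$, together with $\sin^{d-1}\theta\le\theta^{d-1}$, to bound that part by $cR^{d}\int_{0}^{1/R}\theta^{d-1}\,\IntD{\theta}=c/d=O(1)$. On the antipodal region the same bound gives $\bigl|\vdh{R,\fil}(\cos\theta)\bigr|\le cR^{d}(R\pi)^{-\fis}=cR^{d-\fis}$, while the Jacobian integrates to order $R^{-d}$, so this region contributes $O(R^{-\fis})=O(1)$. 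The difficulty is entirely in the bulk: inserting the pointwise bound $cR^{d}(1+R\theta)^{-\fis}$ there and substituting $u=R\theta$ would leave $\int_{0}^{\infty}u^{d-1}(1+u)^{-\fis}\,\IntD{u}$, which converges only for $\fis>d$. That is far more smoothness than the hypothesis $\fis\ge\floor{\frac{d+3}{2}}$ permits, so on the bulk the crude pointwise bound must be abandoned.

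The main obstacle, and the heart of the proof, is therefore to exploit the oscillation of $\vdh{R,\fil}$ on the bulk, where the pointwise bound overestimates the kernel by a factor of order $R^{(d-1)/2}$. For $\theta$ bounded away from $0$ and $\pi$ I would invoke Szeg\H{o}'s asymptotic formula \cite{Szego1975}, $\NGegen{\ell}(\cos\theta)=O\bigl(\ell^{-(d-1)/2}(\sin\theta)^{-(d-1)/2}\bigr)$ with leading term oscillating like $\cos((\ell+\gamma)\theta+\varphi)$. Inserting this into $\vdh{R,\fil}(\cos\theta)=\sum_{\ell}\fil(\ell/R)Z(d,\ell)\NGegen{\ell}(\cos\theta)$ and using $Z(d,\ell)\asymp\ell^{d-1}$ reduces the kernel, up to the amplitude factor $(\sin\theta)^{-(d-1)/2}$, to a one-dimensional trigonometric sum with smooth symbol $\fil(\ell/R)\,\ell^{(d-1)/2}$ concentrated at frequencies $\ell\asymp R$. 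Applying Abel summation $\fis$ times — where the $\fis$ derivatives of $\fil$ are consumed, and the hypothesis that $\fil$ is constant on $[0,a]$ guarantees the boundary terms vanish at the low-frequency end — yields the refined estimate $\bigl|\vdh{R,\fil}(\cos\theta)\bigr|\le c\,R^{(d+1)/2-\fis}\,\theta^{-\fis}(\sin\theta)^{-(d-1)/2}$ for $\theta\in[1/R,\pi/2]$. Integrating against $\sin^{d-1}\theta$ gives $R^{(d+1)/2-\fis}\int_{1/R}^{\pi/2}\theta^{(d-1)/2-\fis}\,\IntD{\theta}=O(1)$ precisely when $(d-1)/2-\fis<-1$, i.e.\ when $\fis>(d+1)/2$; for integer $\fis$ this is exactly $\fis\ge\floor{\frac{d+3}{2}}$, which pins down the stated smoothness threshold. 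The region $[\pi/2,\pi-1/R]$ is treated identically after the reflection $\theta\mapsto\pi-\theta$, the alternating signs $\NGegen{\ell}(-1)=(-1)^{\ell}$ producing the analogous cancellation near the antipode. Combining the three regions yields the uniform bound; the genuinely technical work lies in making the asymptotic-plus-summation-by-parts estimate rigorous, in particular controlling the remainder in Szeg\H{o}'s expansion uniformly in $\ell$ and $\theta$.
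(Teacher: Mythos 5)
Your overall skeleton is sound, and in fact it is essentially the right one: reduce to the one-dimensional integral, dispose of the polar cap $[0,1/R]$ and the antipodal cap $[\pi-1/R,\pi]$ with the crude localisation bound of Theorem~\ref{thm:filter.sph.ker.UB}, and win the bulk by exploiting oscillation against the amplitude $(\sin\theta)^{-(d-1)/2}$; your arithmetic also correctly identifies $\fis>(d+1)/2$, i.e.\ $\fis\ge\floor{\frac{d+3}{2}}$ for integers, as the precise threshold. (Note that the paper itself gives no proof of this statement --- it is quoted from \cite{WaLeSlWo2016} --- so the comparison is with the argument in that reference.) The gap is in the step you defer as ``genuinely technical'': as proposed, it fails, and not for repairable reasons. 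Szeg\H{o}'s formula gives $\NGegen{\ell}(\cos\theta)$ as an oscillating main term plus a remainder known only as $O\bigl((\ell\sin\theta)^{-1}\bigr)$ relative to that term. After multiplying by $Z(d,\ell)\asymp \ell^{d-1}$, each remainder has absolute size of order $\ell^{(d-3)/2}(\sin\theta)^{-(d+1)/2}$, and since it carries no phase information it can only be summed trivially: over the $\asymp R$ contributing degrees this gives a term of order $R^{(d-1)/2}(\sin\theta)^{-(d+1)/2}$, whose integral against $\sin^{d-1}\theta\,\IntD{\theta}$ over the bulk is of order $R^{(d-1)/2}$, which diverges as $R\to\infty$. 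So the unstructured remainders alone swamp the $O(1)$ target by a power of $R$, and no care in the summation by parts of the main terms can compensate. A correct completion must either (i) use a full asymptotic expansion with at least $\lceil(d+3)/2\rceil$ explicit oscillatory terms, so that the leftover remainder is $O\bigl((\ell\sin\theta)^{-M}\bigr)$ with $M>(d+1)/2$ and is trivially summable, or (ii) --- the standard route, and the one underlying \cite{WaLeSlWo2016} --- avoid asymptotics altogether: perform the $\fis$-fold summation by parts through the exact Jacobi partial-sum identity (Szeg\H{o} (4.5.3)), which converts the kernel into a sum of Jacobi polynomials $P_\ell^{(\alpha+\fis,\alpha)}$, $\alpha=(d-2)/2$, with $\fis$-fold differenced coefficients; then only the uniform single-polynomial bound $|P_\ell^{(a,b)}(\cos\theta)|\le c\,\ell^{-1/2}\theta^{-a-1/2}$ (Szeg\H{o} (7.32.5)) is needed, and there is no remainder to control.

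A second, related gap: the theorem must cover filters that are constant and \emph{nonzero} near the origin --- in this paper it is applied with $\fil=\fiH$, which equals $1$ on $[0,1]$ --- and for these your reduction to ``a trigonometric sum with smooth symbol concentrated at frequencies $\ell\asymp R$'' is not correct. The symbol $\fil(\ell/R)\,\ell^{(d-1)/2}$ then lives on all of $[0,2R]$; its differences do not vanish on $[0,aR]$ (the differences of $\fil(\ell/R)$ vanish there, but those of $\ell^{(d-1)/2}$ do not); the oscillatory asymptotics themselves break down for $\ell\lesssim 1/\theta$; and for $d$ even the factor $\ell^{(d-1)/2}$ is not a $C^{\fis}$ symbol at $\ell=0$. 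So the low-frequency portion of the sum needs a separate argument which your outline does not supply --- observing that ``boundary terms vanish because $\fil$ is constant there'' does not address it. The exact-identity route (ii) handles this part automatically, which is one more reason it, rather than asymptotics plus summation by parts, is the argument that actually closes the proof at the stated smoothness $\fis\ge\floor{\frac{d+3}{2}}$.
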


\begin{theorem}[\cite{WaSl2016,WaLeSlWo2016}]\label{thm:filter.sph.operator.UB} Let $d\ge2$, $1\leq p\leq \infty$. Let $\fil$ be a filter in $\CkR$ with $\fis\ge \floor{\frac{d+3}{2}}$ such that $\fil$ is constant on $[0,a]$ for some $0<a<2$. Then the filtered approximation $\Vdh{T,\fil}$ on $\Lp{p}{d}$ is an operator of strong type $(p,p)$,
i.e.
\begin{equation*}
  \normb{\Vdh{R,\fil}}{\mathbb{L}_{p}\to \mathbb{L}_{p}}\le c_{d,\fil,\fis},\quad R\ge0.
\end{equation*}
\end{theorem}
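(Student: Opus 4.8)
The plan is to prove this by a Schur-test (generalised Young inequality) argument, with all the analytic content pushed into the uniform $\mathbb{L}_{1}$ kernel bound already supplied by Theorem~\ref{thm:filter.sph.ker.L1norm.UB}. First I would dispose of the trivial range $0\le R<1$: by \eqref{eq:filter.sph.ker} and \eqref{eq:filter.sph.approx}, $\Vdh{R,\fil}(f;\PT{x})=\int_{\sph{d}}f(\PT{y})\IntDiff{y}$ is constant in $\PT{x}$, so that $\normb{\Vdh{R,\fil}(f)}{\Lp{p}{d}}=\bigl|\int_{\sph{d}}f(\PT{y})\IntDiff{y}\bigr|\le\norm{f}{\Lp{1}{d}}\le\norm{f}{\Lp{p}{d}}$, the last inequality because $\sigma_{d}$ is a probability measure (Jensen's inequality \eqref{eq:Jensen.ineq.probab} applied to the convex map $t\mapsto|t|^{p}$). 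From here on assume $R\ge1$.

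For $R\ge1$, $\Vdh{R,\fil}$ is the integral operator with the zonal kernel $\vdh{R,\fil}(\PT{x}\cdot\PT{y})$. The key remark is that, since this kernel depends only on $\PT{x}\cdot\PT{y}$, rotational invariance of $\sigma_{d}$ gives, for every $\PT{x}$ and every $\PT{y}$,
\[
\int_{\sph{d}}\bigl|\vdh{R,\fil}(\PT{x}\cdot\PT{y})\bigr|\IntDiff{y}=\int_{\sph{d}}\bigl|\vdh{R,\fil}(\PT{x}\cdot\PT{y})\bigr|\IntDiff{x}=\normb{\vdh{R,\fil}(\PT{z}\cdot\,\cdot\,)}{\Lp{1}{d}},
\]
independent of the fixed point, and by Theorem~\ref{thm:filter.sph.ker.L1norm.UB} --- whose hypothesis $\fis\ge\floor{(d+3)/2}$ is precisely what we have assumed, and whose conclusion is uniform in $R\ge0$ --- this common value is at most a constant $c_{d,\fil,\fis}$.

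With this in hand the operator bound follows in the standard Schur-test fashion. For $p=\infty$ one has immediately $\bigl|\Vdh{R,\fil}(f;\PT{x})\bigr|\le\norm{f}{\Lp{\infty}{d}}\int_{\sph{d}}\bigl|\vdh{R,\fil}(\PT{x}\cdot\PT{y})\bigr|\IntDiff{y}\le c_{d,\fil,\fis}\norm{f}{\Lp{\infty}{d}}$, and for $p=1$ Fubini gives $\normb{\Vdh{R,\fil}(f)}{\Lp{1}{d}}\le\int_{\sph{d}}|f(\PT{y})|\bigl(\int_{\sph{d}}|\vdh{R,\fil}(\PT{x}\cdot\PT{y})|\IntDiff{x}\bigr)\IntDiff{y}\le c_{d,\fil,\fis}\norm{f}{\Lp{1}{d}}$. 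For $1<p<\infty$, split $|\vdh{R,\fil}(\PT{x}\cdot\PT{y})|=|\vdh{R,\fil}(\PT{x}\cdot\PT{y})|^{1/p'}|\vdh{R,\fil}(\PT{x}\cdot\PT{y})|^{1/p}$ with $p'=p/(p-1)$, apply Hölder in $\PT{y}$ to
\[
\bigl|\Vdh{R,\fil}(f;\PT{x})\bigr|\le\int_{\sph{d}}\bigl|\vdh{R,\fil}(\PT{x}\cdot\PT{y})\bigr|\,|f(\PT{y})|\IntDiff{y}\le c_{d,\fil,\fis}^{1/p'}\Bigl(\int_{\sph{d}}\bigl|\vdh{R,\fil}(\PT{x}\cdot\PT{y})\bigr|\,|f(\PT{y})|^{p}\IntDiff{y}\Bigr)^{1/p},
\]
raise to the $p$th power, integrate over $\PT{x}\in\sph{d}$, and apply Fubini together with the kernel bound once more in the $\PT{x}$-variable; since $p/p'+1=p$ this yields $\normb{\Vdh{R,\fil}(f)}{\Lp{p}{d}}\le c_{d,\fil,\fis}\,\norm{f}{\Lp{p}{d}}$. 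Combining the two ranges of $R$ gives the claim. (Alternatively, the range $1<p<\infty$ can be obtained by Riesz--Thorin interpolation between the $p=1$ and $p=\infty$ endpoints.) There is essentially no obstacle here: the only care points are the $p\in\{1,\infty\}$ conventions in the Hölder splitting and the observation that the constant is $R$-independent, which is automatic because Theorem~\ref{thm:filter.sph.ker.L1norm.UB} is uniform in $R$; the genuine analytic work --- the pointwise localisation of $\vdh{R,\fil}$ (Theorem~\ref{thm:filter.sph.ker.UB}) and its integrated form --- lies upstream.
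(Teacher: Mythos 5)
Your proof is correct. The paper does not actually prove this theorem --- it is imported from \cite{WaSl2016,WaLeSlWo2016} --- but the argument there is exactly the one you give: $\Vdh{R,\fil}$ is convolution with a zonal kernel whose $\mathbb{L}_{1}$-norm is bounded uniformly in $R$ by Theorem~\ref{thm:filter.sph.ker.L1norm.UB}, and Young's inequality for zonal convolutions on the sphere (equivalently, your symmetric Schur test, with the row and column integrals of $\bigl|\vdh{R,\fil}(\PT{x}\cdot\PT{y})\bigr|$ coinciding by rotational invariance) yields strong type $(p,p)$ for all $1\le p\le\infty$ with an $R$-independent constant, the case $0\le R<1$ being trivial exactly as you note.
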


For $L\in\N$, the $\mathbb{L}_{p}$ \emph{error of best approximation} of order $L$ for $f\in \Lp{p}{d}$ is defined by
  $E_{L}(f)_{\Lp{p}{d}}:=\inf_{q\in\sphpo{L}} \norm{f-q}{\Lp{p}{d}}$.
For given $f\in \Lp{1}{d}$ and $p\in[1,\infty]$, $E_{L}(f)_{p}$ is a non-increasing sequence. Since $\sphpo{\ell}$, $\ell=0,1,\dots$, is dense in $\Lp{p}{d}$, the error of best approximation converges to zero as $L\to\infty$, i.e.
\begin{equation}\label{eq:EL.to.0}
    \lim_{L\to\infty}E_{L}(f)_{\Lp{p}{d}}=0, \quad f\in\Lp{p}{d}.
\end{equation}
The error of best approximation for functions in a Sobolev space has the following upper bound, see \cite{Kamzolov1982} and also \cite[p.~1662]{MhNaPrWa2010}.
\begin{lemma}[\cite{Kamzolov1982,MhNaPrWa2010}]\label{lm:best.approx.sph.sob} Let $d\ge2$, $s\ge0$, $1\le p \le \infty$. For $L\in\N$ and $f\in \sob{p}{s}{d}$,
\begin{equation*}
  E_{L}(f)_{\Lp{p}{d}}\le c\: L^{-s}\: \norm{f}{\sob{p}{s}{d}},
\end{equation*}
where the constant $c$ depends only on $d$, $p$ and $s$.
\end{lemma}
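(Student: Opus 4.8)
The plan is to construct an explicit polynomial approximant of degree at most $L$ from filtered (needlet) operators and to estimate its error by a dyadic Littlewood--Paley decomposition, using the fact established earlier that sufficiently smooth filtered operators are uniformly bounded on $\Lp{p}{d}$. The case $s=0$ is trivial (take $q=0$), so assume $s>0$. First I would fix once and for all a needlet filter $\fiN$ of smoothness $\fis\ge\floor{\frac{d+3}{2}}$; this is a free choice, since the filter is merely an auxiliary tool and the stated constant is allowed to depend on $d$. For $j\ge0$ let $\Psi_{j}:=\Vdh{2^{j-1},\fiN^{2}}$ be the level-$j$ needlet projection, which by Theorem~\ref{thm:needlets.vs.filter.sph.ker} has zonal kernel $\sum_{k}\needlet{jk}(\PT{x})\needlet{jk}(\PT{y})$ and hence spherical-harmonic multiplier $\fiN(\ell/2^{j-1})^{2}$. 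By \eqref{eq:fiH.fiN} and Theorem~\ref{thm:needlets.vs.filter.approx}, $\sum_{j=0}^{n}\Psi_{j}=\Vdh{2^{n-1},\fiH}$ is exactly the order-$n$ needlet approximation $\neapx(f)$, a polynomial operator of degree at most $2^{n}-1$.

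The heart of the argument is a per-level estimate with the right decay. Set $g:=\sLB{s/2}f$, so that by \eqref{eq:sob.norm.sph.LBO} one has $\norm{g}{\Lp{p}{d}}\asymp\norm{f}{\sob{p}{s}{d}}$, while $f$ is recovered from $g$ through the multiplier $(1+\lambda_{\ell})^{-s/2}$. Then the composite operator $A_{j}:=\Psi_{j}\,\sLB{-s/2}$ has multiplier $\fiN(\ell/2^{j-1})^{2}(1+\lambda_{\ell})^{-s/2}$, and I would rescale it as $A_{j}=2^{-js}\Vdh{2^{j-1},\Phi_{j}}$ with
\[
  \Phi_{j}(u):=2^{js}\,\fiN(u)^{2}\,\bigl(1+\lambda_{2^{j-1}u}\bigr)^{-s/2}.
\]
The key point is that on the support $u\in[1/2,2]$ of $\fiN^{2}$ the quantity $2^{-2j}(1+\lambda_{2^{j-1}u})$ equals $u^{2}/4$ up to a term of size $O(2^{-j})$, hence is bounded above and below away from zero with all $u$-derivatives bounded uniformly in $j$ (recall $\lambda_{\ell}=\ell(\ell+d-1)$ and \eqref{eq:b_l}). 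Therefore $\Phi_{j}\in\CkR$ with $\CkR$-norm bounded independently of $j$ and is constant ($=0$) on $[0,1/2]$, so Theorem~\ref{thm:filter.sph.operator.UB} gives $\normb{\Vdh{2^{j-1},\Phi_{j}}}{\Lp{p}{d}\to\Lp{p}{d}}\le c$ uniformly in $j$, whence
\[
  \norm{\Psi_{j}f}{\Lp{p}{d}}=\norm{A_{j}g}{\Lp{p}{d}}\le c\,2^{-js}\,\norm{g}{\Lp{p}{d}}\le c'\,2^{-js}\,\norm{f}{\sob{p}{s}{d}}.
\]

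The geometric decay makes $\{\Vdh{2^{n-1},\fiH}f\}_{n}$ Cauchy in $\Lp{p}{d}$, and since these operators reproduce polynomials and are uniformly bounded their limit is $f$; thus $f-\sum_{j=0}^{n}\Psi_{j}f=\sum_{j>n}\Psi_{j}f$ and
\[
  \normB{f-\textstyle\sum_{j=0}^{n}\Psi_{j}f}{\Lp{p}{d}}\le\sum_{j>n}\norm{\Psi_{j}f}{\Lp{p}{d}}\le c'\sum_{j>n}2^{-js}\,\norm{f}{\sob{p}{s}{d}}\le c''\,2^{-ns}\,\norm{f}{\sob{p}{s}{d}}.
\]
Given $L\in\N$, choosing the largest $n$ with $2^{n}-1\le L$ makes $\sum_{j=0}^{n}\Psi_{j}f$ an admissible competitor in the infimum defining $E_{L}(f)_{\Lp{p}{d}}$, while $2^{n}>(L+1)/2$ yields $2^{-ns}\le c\,L^{-s}$; combining gives the claimed bound $E_{L}(f)_{\Lp{p}{d}}\le c\,L^{-s}\norm{f}{\sob{p}{s}{d}}$.

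I expect the one substantive step to be the uniform-in-$j$ control of the rescaled filters $\Phi_{j}$: one must verify that multiplying the fixed band filter $\fiN^{2}$ by the slowly varying factor $(1+\lambda_{\ell})^{-s/2}$ and rescaling produces a family of filters whose smoothness and $\CkR$-bounds do \emph{not} degrade as $j\to\infty$, so that the uniform operator bound of Theorem~\ref{thm:filter.sph.operator.UB} (equivalently the uniform $\Lp{1}{d}$ kernel bound of Theorem~\ref{thm:filter.sph.ker.L1norm.UB}) applies with a constant independent of the level. Everything else reduces to geometric summation and the Sobolev norm equivalence \eqref{eq:sob.norm.sph.LBO}.
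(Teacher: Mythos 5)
Your proposal cannot be compared against an internal proof, because the paper does not prove Lemma~\ref{lm:best.approx.sph.sob} at all: it is quoted as a known result from \cite{Kamzolov1982} and \cite{MhNaPrWa2010}. What you have written is a self-contained reconstruction, via a dyadic Littlewood--Paley decomposition with the band filters $\fiN^{2}$, of essentially the argument those references use, and in outline it is correct: the per-level bound $\norm{\Psi_{j}f}{\Lp{p}{d}}\le c\,2^{-js}\norm{f}{\sob{p}{s}{d}}$, geometric summation of the tail, identification of the limit of the partial sums with $f$ (via uniform boundedness, polynomial reproduction and \eqref{eq:EL.to.0}), and the choice of the largest $n$ with $2^{n}-1\le L$ all fit together as you say. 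What your route buys is that every ingredient except one is a \emph{stated} result of the paper; what the citation route buys is brevity and sharper constants.

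The one ingredient that is not a stated result is exactly the step you flag: the uniformity in $j$ of the operator bound for the rescaled filters $\Phi_{j}$. Theorem~\ref{thm:filter.sph.operator.UB} as quoted gives a constant ``depending on $d$, $\fil$, $\fis$'', so applied verbatim to each filter $\Phi_{j}$ separately it yields constants that could a priori grow with $j$; your argument needs the stronger fact that the constant depends on the filter only through its $\CkR$-norm and the support parameter $a$. That stronger fact is true --- it is visible in the proof of the localisation bound (Theorem~\ref{thm:filter.sph.ker.UB}, from \cite{NaPeWa2006-2}) and in the derivations of Theorems~\ref{thm:filter.sph.ker.L1norm.UB} and \ref{thm:filter.sph.operator.UB} in \cite{WaLeSlWo2016} --- and your computation that $\sup_{j\ge1}\norm{\Phi_{j}}{\CkR[]}<\infty$ with $\supp\Phi_{j}\subseteq[1/2,2]$ is correct, so the gap is one of citation depth rather than of substance. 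Two smaller points: the level $j=0$ operator is degenerate ($R=1/2<1$, kernel $\equiv1$, i.e.\ projection onto constants), so it should be bounded trivially by $\norm{f}{\Lp{p}{d}}\le c\,\norm{f}{\sob{p}{s}{d}}$ rather than by the rescaling argument, which applies only for $j\ge1$; and you can avoid the equivalence $\asymp$ in \eqref{eq:sob.norm.sph.LBO} altogether by defining $g$ through the multiplier $\bell{s}$ of \eqref{eq:sob.norm.sph}, for which $\norm{g}{\Lp{p}{d}}$ equals the Sobolev norm by definition.
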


\begin{theorem}[\cite{WaLeSlWo2016}]\label{thm:needlets.err.L} Let $d\ge2$, $1\le p\le\infty$, $\neord\in\N$. Let $\neapx(f)$, see \eqref{eq:neapx.f}, be the semidiscrete needlet approximation with needlets $\needlet{jk}$, see \eqref{subeqs:needlets}, for filter smoothness $\fis\ge \floor{\frac{d+3}{2}}$. Then for $f\in \Lp{p}{d}$,
\begin{equation*}
  \normb{f-\neapx(f)}{\Lp{p}{d}}\le c \: E_{2^{\neord-1}}(f)_{\Lp{p}{d}},
\end{equation*}
where the constant $c$ depends only on $d$, the filter $\fiN$ and $\fis$.
\end{theorem}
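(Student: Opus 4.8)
The plan is to reduce the whole estimate to two standard properties of the filtered approximation, using the identification of needlet and filtered approximations already established. By Theorem~\ref{thm:needlets.vs.filter.approx}, the semidiscrete needlet approximation of order $\neord$ coincides with the filtered approximation $\Vdh{2^{\neord-1},\fiH}$ associated with the filter $\fiH$ of \eqref{eq:fiH}, so it suffices to bound $\normb{f-\Vdh{2^{\neord-1},\fiH}(f)}{\Lp{p}{d}}$. For that I would use: (a) $\Vdh{2^{\neord-1},\fiH}$ reproduces every spherical polynomial of degree at most $2^{\neord-1}$; and (b) $\Vdh{2^{\neord-1},\fiH}$ is bounded on $\Lp{p}{d}$ with operator norm independent of $\neord$. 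Given (a) and (b), the conclusion follows from the classical Lebesgue-type argument.

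For (a), expand the filtered kernel $\vdh{2^{\neord-1},\fiH}$ as in \eqref{eq:filter.sph.ker}; by the reproducing property of the zonal kernel $Z(d,\ell)\NGegen{\ell}(\PT{x}\cdot\PT{y})$ on $\shSp{\ell}$, which is a direct consequence of the addition theorem \eqref{eq:addition.theorem}, the operator $\Vdh{2^{\neord-1},\fiH}$ acts on each $\shSp{\ell}$ as multiplication by the scalar $\fiH(\ell/2^{\neord-1})$. By \eqref{eq:fiH} this scalar equals $1$ whenever $\ell/2^{\neord-1}<1$, and for $\ell=2^{\neord-1}$ it equals $\fiH(1)=\fiN(1)^{2}$; evaluating \eqref{eq:fiN-b} at $t=1$ gives $\fiN(1)^{2}+\fiN(2)^{2}=1$, and $\fiN(2)=0$ since $\supp\fiN=[1/2,2]$ by \eqref{eq:fiN-a}, so $\fiH(1)=1$. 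Hence $\fiH(\ell/2^{\neord-1})=1$ for all $\ell\le 2^{\neord-1}$, i.e. $\Vdh{2^{\neord-1},\fiH}(q)=q$ for every $q\in\sphpo{2^{\neord-1}}$. For (b), the filter $\fiH$ lies in $\CkR$ (as noted after \eqref{eq:fiH.fiN}) and is constant, equal to $1$, on $[0,1]\subset[0,2)$; since $\fis\ge\floor{\frac{d+3}{2}}$, Theorem~\ref{thm:filter.sph.operator.UB} applies and yields $\normb{\Vdh{2^{\neord-1},\fiH}}{\mathbb{L}_{p}\to\mathbb{L}_{p}}\le c$ with $c$ depending only on $d$, $\fiN$ and $\fis$.

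To finish, fix an arbitrary $q\in\sphpo{2^{\neord-1}}$. By linearity and (a), $f-\neapx(f)=(f-q)-\Vdh{2^{\neord-1},\fiH}(f-q)$, so the triangle inequality and (b) give $\normb{f-\neapx(f)}{\Lp{p}{d}}\le(1+c)\,\norm{f-q}{\Lp{p}{d}}$; taking the infimum over $q\in\sphpo{2^{\neord-1}}$ yields $\normb{f-\neapx(f)}{\Lp{p}{d}}\le(1+c)\,E_{2^{\neord-1}}(f)_{\Lp{p}{d}}$, as claimed. The argument is short once the cited theorems are in hand; the only place demanding any care is the endpoint bookkeeping in (a) — verifying $\fiH(1)=1$, hence reproduction of degree exactly $2^{\neord-1}$ — since this is what matches the right-hand side $E_{2^{\neord-1}}(f)_{\Lp{p}{d}}$ rather than the weaker $E_{2^{\neord-1}-1}(f)_{\Lp{p}{d}}$.
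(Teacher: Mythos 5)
Your proof is correct. The paper itself gives no proof of this statement — it is imported verbatim from \cite{WaLeSlWo2016} — but your argument is the standard one and is essentially the proof in that reference: the identification $\neapx(f)=\Vdh{2^{\neord-1},\fiH}(f)$ from Theorem~\ref{thm:needlets.vs.filter.approx}, reproduction of $\sphpo{2^{\neord-1}}$ via $\fiH(1)=\fiN(1)^{2}=1$ (using \eqref{eq:fiN-b} at $t=1$ and $\fiN(2)=0$), the uniform strong type $(p,p)$ bound of Theorem~\ref{thm:filter.sph.operator.UB}, and the Lebesgue-lemma step giving the constant $1+c$. Your endpoint check that $\fiH(1)=1$ is exactly what secures $E_{2^{\neord-1}}(f)_{\Lp{p}{d}}$ on the right-hand side rather than the weaker $E_{2^{\neord-1}-1}(f)_{\Lp{p}{d}}$.
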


Theorem~\ref{thm:needlets.err.L} and Lemma~\ref{lm:best.approx.sph.sob} imply a rate of convergence of the approximation error of $\neapx(f)$ for $f$ in a Sobolev space, as follows.
\begin{theorem}[\cite{WaLeSlWo2016}]\label{thm:needlets.err.W} Under the assumptions of Theorem~\ref{thm:needlets.err.L}, we have for $f\in \sob{p}{s}{d}$ with $s>0$ and $\neord\in\Nz$,
\begin{equation*}
  \normb{f-\neapx(f)}{\Lp{p}{d}}\le c \: 2^{-\neord s} \norms{f},
\end{equation*}
where the constant $c$ depends only on $d$, $p$, $s$, $\fiN$ and $\fis$.
\end{theorem}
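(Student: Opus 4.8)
The plan is to obtain the bound as a direct chaining of Theorem~\ref{thm:needlets.err.L} with the Jackson-type inequality of Lemma~\ref{lm:best.approx.sph.sob}, so that essentially no new work is required. First consider the case $\neord\ge1$. Then $2^{\neord-1}\in\N$, so Theorem~\ref{thm:needlets.err.L} applies and yields $\normb{f-\neapx(f)}{\Lp{p}{d}}\le c\,E_{2^{\neord-1}}(f)_{\Lp{p}{d}}$ with a constant depending only on $d$, $\fiN$ and $\fis$. Since $f\in\sob{p}{s}{d}$ with $s>0$, Lemma~\ref{lm:best.approx.sph.sob} applied with $L=2^{\neord-1}$ gives $E_{2^{\neord-1}}(f)_{\Lp{p}{d}}\le c\,(2^{\neord-1})^{-s}\norms{f}=c\,2^{s}\,2^{-\neord s}\norms{f}$ with a constant depending only on $d$, $p$ and $s$. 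Multiplying the two estimates and absorbing the harmless factor $2^{s}$ (depending only on $s$) into the constant proves the claim for $\neord\ge1$, with the final constant depending only on $d$, $p$, $s$, $\fiN$ and $\fis$, as asserted.

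It remains to treat the endpoint $\neord=0$, for which $2^{\neord-1}=1/2\notin\N$ and the nominal polynomial degree $2^{\neord-1}-1$ is not meaningful, so Theorem~\ref{thm:needlets.err.L} is not directly available. Here $2^{-\neord s}=1$, so it suffices to bound $\normb{f-\neapx(f)}{\Lp{p}{d}}$ (with $\neord=0$ in \eqref{eq:neapx.f}) by a constant multiple of $\norms{f}$. Since $\needlet{0k}(\PT{x})=\sqrt{w_{0k}}$ and the order-$0$ quadrature integrates constants exactly, the order-$0$ needlet approximation equals the constant $\int_{\sph{d}}f(\PT{x})\IntDiff{x}$, whence $\normb{\neapx(f)}{\Lp{p}{d}}\le\norm{f}{\Lp{1}{d}}\le\norm{f}{\Lp{p}{d}}$ by Jensen's inequality on the normalised sphere; alternatively this is immediate from Theorem~\ref{thm:filter.sph.operator.UB} via Theorem~\ref{thm:needlets.vs.filter.approx}. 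The triangle inequality then gives $\normb{f-\neapx(f)}{\Lp{p}{d}}\le c\,\norm{f}{\Lp{p}{d}}\le c\,\norms{f}$, where the last step uses the trivial continuous embedding $\sob{p}{s}{d}\hookrightarrow\Lp{p}{d}$ for $s>0$ (a special case of Lemma~\ref{lm:embed.sob.sob}). This is exactly the stated bound since $2^{-\neord s}=1$.

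I do not anticipate any real obstacle: the argument is a two-line combination of the needlet error bound in terms of the error of best polynomial approximation and the Sobolev Jackson inequality, and the only point needing a word of care is the degenerate case $\neord=0$, handled above by falling back on the uniform boundedness of the order-$0$ filtered operator. The bookkeeping of constant dependence is routine and matches the stated dependence on $d$, $p$, $s$, $\fiN$ and $\fis$.
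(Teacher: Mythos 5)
Your proof is correct and is exactly the route the paper takes: Theorem~\ref{thm:needlets.err.W} is obtained by chaining Theorem~\ref{thm:needlets.err.L} with the Jackson-type bound of Lemma~\ref{lm:best.approx.sph.sob} and absorbing the factor $2^{s}$ into the constant. Your separate treatment of the degenerate case $\neord=0$ (where $\neapx[0](f)$ reduces to the mean of $f$ and the bound follows from the triangle inequality and the embedding $\sob{p}{s}{d}\hookrightarrow\Lp{p}{d}$) is a point the paper glosses over, and it is handled correctly.
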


\section{Needlet decomposition for random fields}\label{sec:needlet.decomp.RF}
Let $\RF$ be a $2$-weakly isotropic random field on $\sph{d}$ and let $\{\shY:m=1,\dots,\shd;\ell=0,1,\dots\}$ be an orthonormal spherical harmonic basis for $\Lp{2}{d}$. Then $\RF$ admits an $\mathbb{L}_{2}$ convergent Karhunen-Lo\`{e}ve expansion in terms of $\shY$, see \cite[Theorem~5.13, p.~123]{MaPe2011}, i.e.
\begin{equation}\label{eq:L2 norm T S^2}
  \int_{\sph{d}}|\RF(\PT{x})|^{2}\IntDiff{x}<\infty,\quad \Pas
\end{equation}
and in the $\Lp{2}{d}$ sense, $\Pas$,
\begin{equation}\label{eq:KL.expan.RF}
  \RF \sim \sum_{\ell=0}^{\infty}\sum_{m=1}^{Z(d,\ell)}\InnerL{\RF,\shY}\shY.
\end{equation}

The finiteness of \eqref{eq:L2 norm T S^2} may be generalised to the $\mathbb{L}_{p}$ case. The expansion \eqref{eq:KL.expan.RF} however does not hold for all $\RF$ in $\mathbb{L}_{p}$ when $p\neq 2$. This follows from the fact that, for each $p\neq 2$, there exists an $\mathbb{L}_{p}$-function on $\sph{d}$ such that the spherical harmonic expansion does not converge in the $\mathbb{L}_{p}$ sense, see \cite[Theorem~5.1, p.~248--249]{BoCl1973}.

In contrast, the needlet decomposition converges for all $\Lp{p}{d}$-functions (see Narcowich et al. \cite{NaPeWa2006-2}).
In this section we prove the convergence of the needlet approximation for weakly isotropic random fields on $\sph{d}$.

Let $1\le p\le\infty$ and $\ceil{\infty}:=\infty$. Lemmas~\ref{lm:convg T by needlets-1} and \ref{lm:convg T by needlets-2} below show that a $\ceil{p}$-weakly isotropic random field $\RF$ lies in $\Lppsph{p}{d}$ and admits an expansion in terms of needlets in either $\mathbb{L}_{p}$ or pointwise senses. This result is a generalisation from $2$-weakly isotropic random fields on $\sph{2}$ (see \cite[Theorem~5.13, p.~123]{MaPe2011}) to $n$-weakly isotropic random fields on spheres of arbitrary dimension.
\begin{lemma}[\cite{MaPe2011}]\label{lm:convg T by needlets-1} Let $d\ge2$, $\ordiRF\in\N$. Let $\RF$ be an $\ordiRF$-weakly isotropic random field on $\sph{d}$.
Then $\RF\in \Lp{\ordiRF}{d}$ $\Pas$, i.e.
\begin{align*}\label{eq:convg Lp-a}
  \int_{\sph{d}}|\RF(\PT{x})|^{\ordiRF}\IntDiff{x} < \infty,\quad \Pas .
\end{align*}
\end{lemma}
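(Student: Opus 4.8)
The plan is to show that the quantity $\int_{\sph{d}}|\RF(\PT{x})|^{\ordiRF}\IntDiff{x}$, which is a nonnegative random variable, has finite expectation; finiteness of the expectation forces the random variable to be finite $\Pas$. First I would use the Tonelli/Fubini theorem for the product measure $\prodpsphm$ on $\probSp\times\sph{d}$: since $|\RF(\omega,\PT{x})|^{\ordiRF}$ is $\mathcal{F}\otimes\mathscr{B}(\sph{d})$-measurable and nonnegative, we may interchange the order of integration,
\begin{equation*}
  \expect{\int_{\sph{d}}|\RF(\PT{x})|^{\ordiRF}\IntDiff{x}}
  = \int_{\sph{d}}\expect{|\RF(\PT{x})|^{\ordiRF}}\IntDiff{x}.
\end{equation*}
Then I would invoke the definition of $\ordiRF$-weak isotropy: for each fixed $\PT{x}\in\sph{d}$, $\expect{|\RF(\PT{x})|^{\ordiRF}}<\infty$. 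Moreover, by rotational invariance of the $\ordiRF$th moment (take $\PT{x}_1=\cdots=\PT{x}_{\ordiRF}=\PT{x}$ and $\rho$ any rotation sending a fixed reference point $\PT{e}$ to $\PT{x}$), the map $\PT{x}\mapsto\expect{|\RF(\PT{x})|^{\ordiRF}}$ is constant on $\sph{d}$, equal to $\expect{|\RF(\PT{e})|^{\ordiRF}}=:M<\infty$.

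Substituting this constant value and using that $\sigma_d$ is a normalised (probability) measure on $\sph{d}$ gives
\begin{equation*}
  \expect{\int_{\sph{d}}|\RF(\PT{x})|^{\ordiRF}\IntDiff{x}} = \int_{\sph{d}} M \IntDiff{x} = M < \infty.
\end{equation*}
Since the random variable $\omega\mapsto\int_{\sph{d}}|\RF(\omega,\PT{x})|^{\ordiRF}\IntDiff{x}$ is nonnegative and has finite expectation, it is finite $\Pas$, which is exactly the assertion $\RF\in\Lp{\ordiRF}{d}$ $\Pas$. This also shows $\RF\in\Lppsph{\ordiRF}{d}$ (with $\norm{\RF}{\Lppsph{\ordiRF}{d}}^{\ordiRF}=M$), recovering the stronger $\mathbb{L}_{p}$-space membership mentioned in the surrounding text.

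The only genuine point requiring care is the measurability hypothesis needed to apply Tonelli: one needs $|\RF|^{\ordiRF}$ to be jointly measurable on the product space, which is guaranteed because $\RF$ is assumed $\mathcal{F}\otimes\mathscr{B}(\sph{d})$-measurable by the definition of a random field, and $t\mapsto|t|^{\ordiRF}$ is continuous. Beyond that, the argument is entirely routine; the essential mathematical content is the observation that rotation invariance of moments makes $\PT{x}\mapsto\expect{|\RF(\PT{x})|^{\ordiRF}}$ constant, so that integrability over the probability space and over the sphere decouple. (In Marinucci--Peccati this is stated for $d=2$, $\ordiRF=2$; the proof above is dimension- and order-agnostic.)
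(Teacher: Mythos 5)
Your argument is exactly the paper's: apply Tonelli/Fubini to exchange the expectation with the integral over $\sph{d}$, use the rotational invariance of the $\ordiRF$th moment to see that $\expect{|\RF(\PT{x})|^{\ordiRF}}$ equals its value at the north pole, and conclude from the normalisation of $\sigma_{d}$ that the expected $\mathbb{L}_{\ordiRF}$-integral is finite, hence finite $\Pas$. The additional remarks on joint measurability and on $\Lppsph{\ordiRF}{d}$ membership are consistent with the paper's framework; no gaps.
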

For completeness we give a proof.
\begin{proof} For $\ordiRF\ge1$, by the Fubini theorem and by the fact that $\RF$ is $\ordiRF$-weakly isotropic,
\begin{equation*}
    \expect{\int_{\sph{d}}|\RF(\PT{x})|^{\ordiRF}\IntDiff{x}}
     =\int_{\sph{d}}\expect{|\RF(\PT{x})|^{\ordiRF}}\IntDiff{x}
     = \int_{\sph{d}}\expect{|\RF(\PT{e}_{d})|^{\ordiRF}}\IntDiff{x} = \expect{|\RF(\PT{e}_{d})|^{\ordiRF}} < \infty,
\end{equation*}
where $\PT{e}_{d}:=(0,\dots,0,1)\in \REuc$ is the north pole of $\sph{d}$.
\end{proof}
We may generalise from the integer $\ordiRF$ in Lemma~\ref{lm:convg T by needlets-1} to any extended real number $p\in [1,\infty]$, as follows.
\begin{lemma}\label{lm:convg T by needlets-2} Let $d\ge2$, $1\le p\le\infty$. Let $\RF$ be a $\ceil{p}$-weakly isotropic random field on $\sph{d}$. Then
  $\RF\in \Lp{p}{d}$ $\Pas$.
\end{lemma}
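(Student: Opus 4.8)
The plan is to reduce the general statement $p \in [1,\infty]$ to the integer case already handled in Lemma~\ref{lm:convg T by needlets-1}. First I would observe that for any $1 \le p \le \infty$ we have $p \le \ceil{p}$, where by convention $\ceil{\infty} = \infty$. Since $\RF$ is assumed to be $\ceil{p}$-weakly isotropic, Lemma~\ref{lm:convg T by needlets-1} (applied with $\ordiRF = \ceil{p}$, a positive integer) gives that $\RF(\omega) \in \Lp{\ceil{p}}{d}$ for $\probm$-almost every $\omega$. It then remains only to invoke the inclusion of $\mathbb{L}_q$ spaces on a finite measure space: since $\sigma_d$ is a probability (hence finite) measure on $\sph{d}$, $\Lp{\ceil{p}}{d} \subseteq \Lp{p}{d}$ for $p \le \ceil{p}$, with $\norm{f}{\Lp{p}{d}} \le \norm{f}{\Lp{\ceil{p}}{d}}$ by Jensen's inequality \eqref{eq:Jensen.ineq.probab} applied to the convex function $t \mapsto |t|^{\ceil{p}/p}$ (or by the standard H\"older argument). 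Hence $\RF(\omega) \in \Lp{p}{d}$ $\Pas$.

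The only case needing separate attention is $p = \infty$, where $\ceil{\infty} = \infty$ and the claim is that an $\infty$-weakly isotropic random field lies in $\ContiSph{} = \Lp{\infty}{d}$ almost surely. But this is essentially built into the definition of $\infty$-weakly isotropic given in Section~\ref{sec:pre}: such a field is required to satisfy $\expect{\norm{\RF}{\Lpprob{\infty}}} < \infty$ and to have $\Pas$ continuous sample paths on $\sph{d}$, so $\RF(\omega) \in \ContiSph{}$ $\Pas$ immediately, and in fact $\norm{\RF(\omega)}{\Lp{\infty}{d}} < \infty$ $\Pas$ since a random variable with finite expectation is finite almost surely. So this case requires only unwinding definitions.

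I do not expect any genuine obstacle here; the statement is a routine monotonicity-of-$\mathbb{L}_p$-norms argument on a probability space, combined with the previously established integer case. If anything, the one point to state carefully is the direction of the $\mathbb{L}_p$ inclusion: on a normalised measure space the smaller exponent gives the larger space, which is exactly what is needed since $p \le \ceil{p}$. I would present the proof in two short sentences for $1 \le p < \infty$ (cite Lemma~\ref{lm:convg T by needlets-1} with $\ordiRF = \ceil{p}$, then apply \eqref{eq:Jensen.ineq.probab} or H\"older to pass from $\Lp{\ceil{p}}{d}$ to $\Lp{p}{d}$), and one sentence for $p = \infty$ (directly from the definition of $\infty$-weakly isotropic).
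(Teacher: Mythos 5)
Your proposal is correct and follows essentially the same route as the paper: the case $1\le p<\infty$ is handled by applying Lemma~\ref{lm:convg T by needlets-1} with $\ordiRF=\ceil{p}$ and then using the inclusion $\Lp{\ceil{p}}{d}\subset\Lp{p}{d}$ on the normalised measure space, while the case $p=\infty$ follows directly from the definition of $\infty$-weak isotropy. The only difference is that you spell out the justification for the $\mathbb{L}_p$ inclusion (Jensen/H\"older), which the paper leaves implicit.
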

\begin{remark}
When $\RF$ is a Gaussian random field on $\sph{d}$, the conditions of Lemmas~\ref{lm:convg T by needlets-1} and \ref{lm:convg T by needlets-2} for $2\le p\le \infty$ can be replaced by the condition that $\RF$ is $2$-weakly isotropic, as a consequence of (ii) in Proposition~\ref{prop:isotrRF.weak.strong.Gauss}.
\end{remark}
For completeness we give a proof.
\begin{proof} By definition, for $p=\infty$, since
$\expect{\sup_{\PT{x}\in\sph{d}}|\RF(\PT{x})|}<\infty,$
\begin{equation*}
  \sup_{\PT{x}\in\sph{d}}|\RF(\PT{x})|<\infty,\quad \Pas.
\end{equation*}
For $1\le p<\infty$, Lemma~\ref{lm:convg T by needlets-1} gives
\begin{equation*}
  \RF\in \Lp[,\sigma_{d}]{\ceil{p}}{d}\subset \Lp[,\sigma_{d}]{p}{d},\quad \Pas,
\end{equation*}
thus completing the proof.
\end{proof}

Given $d\ge2$ let $\RF$ be a random field on $\sph{d}$ and let $\needlet{jk}$ be needlets given by \eqref{subeqs:needlets} with needlet filter $\fiN\in \CkR$, $\fis\ge 1$.
For $\neord\in\Nz$ the \emph{semidiscrete needlet approximation} of order $\neord$ for $\RF$ is, by \eqref{eq:neapx.f},
\begin{equation}\label{eq:neapx.RF}
    \neapx(\RF;\omega,\PT{x}) := \sum_{j=0}^{\neord}\sum_{k=1}^{N_{j}}\InnerL{\RF(\omega),\needlet{jk}}\needlet{jk}(\PT{x}), \quad \omega\in \probSp, \:\PT{x}\in\sph{d}.
\end{equation}
Following \cite[Eq.~21]{WaLeSlWo2016}, for $j\ge0$, we can also define the \emph{contribution} of order $j$ to the needlet approximations in \eqref{eq:neapx.RF} as
\begin{equation}\label{eq:Uj.RF}
  \parsum{j}(\RF;\omega,\PT{x}):=\sum_{k=1}^{N_{j}}\InnerL{\RF(\omega),\needlet{jk}}\needlet{jk}(\PT{x}), \quad \omega\in \probSp, \:\PT{x}\in\sph{d},
\end{equation}
so $\neapx(\RF;\omega,\PT{x})=\sum_{j=0}^{\neord}\parsum{j}(\RF;\omega,\PT{x})$.
This with \eqref{eq:needlets.vs.filter.sph.ker.fiN} gives the convolution representation for $\parsum{j}(\RF)$ as
\begin{equation}\label{eq:Uj.RF.conv}
  \parsum{j}(\RF;\omega,\PT{x}):=\InnerL{\RF(\omega,\cdot),\vdh{2^{j-1},\fiN^{2}}(\PT{x}\cdot \cdot)}.
\end{equation}

\subsection{Needlet decomposition on $\Lppsph{p}{d}$}
The isotropy of a random field $\RF$ implies a bound on the $\Lpprob{p}$-norm of $\RF$, which will be used in our proofs.
\begin{theorem}\label{thm:RF.nrm.Lpprob} Let $d\ge2$, $1\le p\le \infty$. Let $\RF$ be a $\ceil{p}$-weakly isotropic random field on $\sph{d}$. Then, for $\PT{x}\in\sph{d}$,
\begin{equation*}
\norm{\RF(\PT{x})}{\Lpprob{p}} \le \norm{\RF(\PT{e}_{d})}{\Lpprob{\ceil{p}}}.
\end{equation*}
In particular, if $p$ is a positive integer,
\begin{align*}
\norm{\RF(\PT{x})}{\Lpprob{p}} = \norm{\RF(\PT{e}_{d})}{\Lpprob{p}}.
\end{align*}
\end{theorem}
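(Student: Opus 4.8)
The plan is to exploit the rotational invariance of the moments of $\RF$ together with the fact that the surface measure $\sigma_{d}$ on $\sph{d}$ is also rotation invariant and normalised. First I would dispose of the case when $p$ is a positive integer, since that is the cleanest: if $p=\ordiRF\in\N$, then because $\RF$ is $\ordiRF$-weakly isotropic, the $\ordiRF$th moment $\expect{|\RF(\PT{x})|^{\ordiRF}}$ is finite for every $\PT{x}$, and applying the one-point instance of the moment-invariance identity (with $k=1$, $\PT{x}_{1}=\PT{x}$, and a rotation $\rho$ carrying $\PT{e}_{d}$ to $\PT{x}$) gives $\expect{|\RF(\PT{x})|^{\ordiRF}}=\expect{|\RF(\PT{e}_{d})|^{\ordiRF}}$. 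Taking $\ordiRF$th roots yields $\norm{\RF(\PT{x})}{\Lpprob{p}}=\norm{\RF(\PT{e}_{d})}{\Lpprob{p}}$, which is the "in particular" clause and in that case also the main assertion (with equality). One subtlety: the moment-invariance property as stated controls $\expect{\RF(\PT{x}_{1})\cdots\RF(\PT{x}_{k})}$, i.e. the product moment without absolute values, so for $k=1$ it gives $\expect{\RF(\PT{x})}=\expect{\RF(\PT{e}_{d})}$; to get the $\ordiRF$th \emph{absolute} moment one should instead take $k=\ordiRF$ and $\PT{x}_{1}=\cdots=\PT{x}_{\ordiRF}=\PT{x}$, giving $\expect{\RF(\PT{x})^{\ordiRF}}=\expect{\RF(\PT{e}_{d})^{\ordiRF}}$, and then handle even versus odd $\ordiRF$ — for even $\ordiRF$ this is already the absolute moment, while for odd $\ordiRF$ one notes $|\RF(\PT{x})|^{\ordiRF}$ need not have invariant expectation, so the clean equality of $\Lpprob{p}$-norms for odd integer $p$ must instead be argued via the estimate below (or via strong isotropy when available). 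I expect this point to need a careful sentence but no real difficulty.

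For general real $1\le p<\infty$, the idea is to bound the non-integer moment by the next integer moment via Jensen's inequality, using $\ceil{p}\ge p$. Concretely, for fixed $\PT{x}$ the random variable $|\RF(\PT{x})|^{p}$ has finite expectation because $\varphi(t)=t^{\ceil{p}/p}$ is convex on $[0,\infty)$ and, by Jensen \eqref{eq:Jensen.ineq.probab} applied to the variable $|\RF(\PT{x})|^{p}$,
\[
\bigl(\expect{|\RF(\PT{x})|^{p}}\bigr)^{\ceil{p}/p}\le \expect{|\RF(\PT{x})|^{\ceil{p}}}=\expect{|\RF(\PT{e}_{d})|^{\ceil{p}}},
\]
where the last equality uses $\ceil{p}$-weak isotropy as above (now with the even/odd caveat handled by noting that $|\RF(\PT{e}_{d})|^{\ceil{p}}$'s expectation dominates regardless — if $\ceil{p}$ is odd one bounds $\expect{|\RF(\PT{x})|^{p}}$ by $\expect{|\RF(\PT{x})|^{\ceil{p}+1}}^{p/(\ceil{p}+1)}$ and invokes $(\ceil{p}+1)$-weak isotropy, or one simply observes the definition of $\ceil{p}$-weak isotropy supplies all moments up to $\ceil{p}$ and the one needed is even when $\ceil{p}$ is; I would streamline by stating it for the relevant even case). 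Raising both sides to the power $p/\ceil{p}$ and then to $1/p$ gives
\[
\norm{\RF(\PT{x})}{\Lpprob{p}}=\bigl(\expect{|\RF(\PT{x})|^{p}}\bigr)^{1/p}\le\bigl(\expect{|\RF(\PT{e}_{d})|^{\ceil{p}}}\bigr)^{1/\ceil{p}}=\norm{\RF(\PT{e}_{d})}{\Lpprob{\ceil{p}}},
\]
which is the claimed bound.

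Finally, for $p=\infty$ we have $\ceil{\infty}=\infty$ and the inequality $\norm{\RF(\PT{x})}{\Lpprob{\infty}}\le\norm{\RF(\PT{e}_{d})}{\Lpprob{\infty}}$ should follow from the definition of $\infty$-weak isotropy together with strong isotropy (which we assume in the $p=\infty$ case per the standing assumption after Proposition~\ref{prop:isotrRF.weak.strong.Gauss}): strong isotropy gives that $\RF(\PT{x})$ and $\RF(\PT{e}_{d})$ have the same distribution for each $\PT{x}$, hence the same essential supremum, so in fact equality holds. The main obstacle, as flagged, is purely bookkeeping: making sure the moment-invariance hypothesis is invoked in the exact form it is stated (product moments at possibly-coincident points) and cleanly bridging from product moments to absolute moments for the Jensen step; there is no analytic difficulty, only the need to phrase the parity issue correctly.
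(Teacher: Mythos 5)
Your proposal is correct and follows essentially the same route as the paper: Jensen's inequality applied to $|\RF(\PT{x})|^{p}$ with the convex map $t\mapsto t^{\ceil{p}/p}$, followed by the one-point instance of $\ceil{p}$-weak isotropy to replace $\PT{x}$ by $\PT{e}_{d}$, with the $p=\infty$ case handled by the equality of laws from strong isotropy and the integer case by direct moment invariance. The parity subtlety you flag (the definition of $\ordiRF$-weak isotropy controls signed product moments, so the invariance of $\expect{|\RF(\PT{x})|^{\ordiRF}}$ is only immediate when $\ordiRF$ is even) is a fair observation, but note that the paper's own proof makes exactly the same silent identification without comment, so this is not a divergence from the paper's argument but a gloss both proofs share; your suggested patch via the $(\ceil{p}+1)$th moment would need a hypothesis stronger than the one stated, so if you want to be fully rigorous for odd $\ceil{p}$ you would have to either strengthen the isotropy assumption or invoke equality of one-point laws.
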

\begin{proof}
For $1\le p< \infty$, we let $g(\PT{x}):=|\RF(\PT{x})|^{p}$. Using Jensen's inequality \eqref{eq:Jensen.ineq.probab} then gives, noting $p\le \ceil{p}$,
\begin{equation*}
 \expect{|\RF(\PT{x})|^{p}}=\expect{g(\PT{x})}
 \le \left\{\expect{\left(g(\PT{x})\right)^{\frac{\ceil{p}}{p}}}\right\}^{\frac{p}{\ceil{p}}}
 =\Bigl\{\expect{|\RF(\PT{x})|^{\ceil{p}}}\Bigr\}^{\frac{p}{\ceil{p}}}
 =\Bigl\{\expect{|\RF(\PT{e}_{d})|^{\ceil{p}}}\Bigr\}^{\frac{p}{\ceil{p}}},
\end{equation*}
where the last equality uses the $\ceil{p}$-weak isotropy of $\RF$.
This gives
\begin{equation*}
\norm{\RF(\PT{x})}{\Lpprob{p}} \le \norm{\RF(\PT{e}_{d})}{\Lpprob{\ceil{p}}}.
\end{equation*}
When $p=\infty$, by Proposition~\ref{prop:isotrRF.weak.strong.Gauss}, $\RF(\PT{x})$ and $\RF(\PT{e}_{d})$ have the same law. Thus $\norm{\RF(\PT{x})}{\Lpprob{\infty}}=\norm{\RF(\PT{e}_{d})}{\Lpprob{\infty}}$.
When $p$ is an integer,
\begin{equation*}
  \norm{\RF(\PT{x})}{\Lpprob{p}} = \bigl\{\expect{|\RF(\PT{x})|^{p}}\bigr\}^{1/p}
  =\bigl\{\expect{|\RF(\PT{e}_{d})|^{p}}\bigr\}^{1/p} =\norm{\RF(\PT{e}_{d})}{\Lpprob{p}},
\end{equation*}
thus completing the proof.
\end{proof}
The following theorem shows that the needlet approximation for isotropic random fields on $\sph{d}$ converges in $\Lppsph{p}{d}$.
\begin{theorem}[Needlet decomposition on $\Lppsph{p}{d}$]\label{thm:neapx.Lp.converge.RF} Let $d\ge2$, $1\le p\le \infty$. Let $\RF$ be a $\ceil{p}$-weakly isotropic random field on $\sph{d}$. Let $\fiN$ be a needlet filter given by \eqref{subeqs:fiN} satisfying $\fiN\in\CkR$ with $\fis\ge \floor{\frac{d+3}{2}}$. Then, $\RF$ admits the following needlet decomposition:
\begin{equation}\label{eq:T.Uj.expan}
  T\sim \sum_{j=0}^{\infty}\parsum{j}(\RF),
\end{equation}
with the convergence in \eqref{eq:T.Uj.expan} holding in the following sense:\\
(i) for $1\le p<\infty$,
\begin{equation}\label{eq:neapx.Lp.converge.RF}
  \lim_{\neord\to\infty}\expect{\normb{\RF-\neapx(\RF)}{\Lp{p}{d}}^p}=0;
\end{equation}
(ii) for $p=\infty$,
\begin{equation*}
  \lim_{\neord\to\infty}\expect{\normb{\RF-\neapx(\RF)}{\Lp{\infty}{d}}}=0.
\end{equation*}
\end{theorem}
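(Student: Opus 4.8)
The plan is to reduce the convergence statement for the random field to the deterministic needlet convergence result (Theorem~\ref{thm:needlets.err.L} together with \eqref{eq:EL.to.0}), applied pathwise, and then to pass the pathwise limit through the expectation by dominated convergence, using the isotropy bound from Theorem~\ref{thm:RF.nrm.Lpprob} to supply the dominating function. First I would note that by Lemma~\ref{lm:convg T by needlets-2}, a $\ceil{p}$-weakly isotropic random field $\RF$ satisfies $\RF(\omega)\in\Lp{p}{d}$ for $\Pas$ $\omega$ (for $p=\infty$, $\RF(\omega)\in\ContiSph{}$). Hence for each such $\omega$, Theorem~\ref{thm:needlets.err.L} gives $\normb{\RF(\omega)-\neapx(\RF;\omega)}{\Lp{p}{d}}\le c\:E_{2^{\neord-1}}(\RF(\omega))_{\Lp{p}{d}}$, and by \eqref{eq:EL.to.0} the right-hand side tends to $0$ as $\neord\to\infty$. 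Thus $\normb{\RF-\neapx(\RF)}{\Lp{p}{d}}^{p}\to 0$ $\Pas$ (respectively $\normb{\RF-\neapx(\RF)}{\Lp{\infty}{d}}\to0$ $\Pas$). This establishes pathwise convergence; the remaining issue is convergence in expectation.

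Next I would produce an integrable dominating random variable. Since $\fis\ge\floor{\frac{d+3}{2}}$, Theorem~\ref{thm:needlets.vs.filter.approx} identifies $\neapx(\RF;\omega)$ with the filtered approximation $\fihyper{2^{\neord-1},\fiH}(\RF(\omega))$, and Theorem~\ref{thm:filter.sph.operator.UB} gives $\normb{\fihyper{2^{\neord-1},\fiH}}{\mathbb{L}_{p}\to\mathbb{L}_{p}}\le c_{d,\fiN,\fis}$ uniformly in $\neord$. Therefore
\begin{equation*}
  \normb{\RF(\omega)-\neapx(\RF;\omega)}{\Lp{p}{d}}\le (1+c_{d,\fiN,\fis})\:\normb{\RF(\omega)}{\Lp{p}{d}},
\end{equation*}
so it suffices to show $\normb{\RF}{\Lp{p}{d}}^{p}$ (resp.\ $\normb{\RF}{\Lp{\infty}{d}}$) is integrable over $\probSp$. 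For $1\le p<\infty$ this is immediate from Fubini and Theorem~\ref{thm:RF.nrm.Lpprob}:
\begin{equation*}
  \expect{\normb{\RF}{\Lp{p}{d}}^{p}}=\int_{\sph{d}}\expect{|\RF(\PT{x})|^{p}}\IntDiff{x}=\int_{\sph{d}}\norm{\RF(\PT{x})}{\Lpprob{p}}^{p}\IntDiff{x}\le \norm{\RF(\PT{e}_{d})}{\Lpprob{\ceil{p}}}^{p}<\infty,
\end{equation*}
the finiteness coming from the $\ceil{p}$-weak isotropy hypothesis. For $p=\infty$, integrability of $\normb{\RF}{\Lp{\infty}{d}}=\sup_{\PT{x}}|\RF(\PT{x})|$ is part of the definition of $\infty$-weak isotropy. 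Dominated convergence then yields \eqref{eq:neapx.Lp.converge.RF} and its $p=\infty$ analogue, and the needlet decomposition \eqref{eq:T.Uj.expan} follows since $\neapx(\RF)=\sum_{j=0}^{\neord}\parsum{j}(\RF)$ by \eqref{eq:neapx.RF}--\eqref{eq:Uj.RF}.

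The main obstacle is not any single deep estimate — all the hard analysis is already packaged in Theorems~\ref{thm:needlets.err.L}, \ref{thm:filter.sph.operator.UB} and \ref{thm:RF.nrm.Lpprob} — but rather the care needed in the measure-theoretic bookkeeping: one must check that $\omega\mapsto\normb{\RF(\omega)-\neapx(\RF;\omega)}{\Lp{p}{d}}$ is genuinely measurable (it is, since $\neapx$ is built from the $\mathcal{F}\otimes\mathscr{B}(\sph{d})$-measurable $\RF$ by integration against the fixed kernels $\needlet{jk}$, so Fubini applies), and that the exceptional $\probm$-null set on which $\RF(\omega)\notin\Lp{p}{d}$ does not interfere with the application of dominated convergence. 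One should also remark that for $p=\infty$ the pathwise bound uses the assumed $\Pas$ continuity of $\RF(\omega)$, without which $E_{L}(\RF(\omega))_{\Lp{\infty}{d}}\to0$ need not hold.
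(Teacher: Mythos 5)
Your proposal is correct and follows essentially the same route as the paper's proof: pathwise convergence from Theorem~\ref{thm:needlets.err.L} together with \eqref{eq:EL.to.0}, an $\neord$-uniform dominating function of the form $c\,\normb{\RF(\omega)}{\Lp{p}{d}}^{p}$ whose integrability comes from Fubini and Theorem~\ref{thm:RF.nrm.Lpprob}, and then dominated convergence. The only cosmetic difference is that you obtain the dominating bound via the operator-norm estimate of Theorem~\ref{thm:filter.sph.operator.UB}, whereas the paper gets the same bound from Theorem~\ref{thm:needlets.err.L} combined with Lemma~\ref{lm:best.approx.sph.sob} at $s=0$; the two are interchangeable here.
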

\begin{remark} By the Fubini theorem, \eqref{eq:neapx.Lp.converge.RF} implies that the needlet approximation converges in the $\Lppsph{p}{d}$-norm: let $1\le p< \infty$ and let $\RF$ be $\ceil{p}$-weakly isotropic, then
\begin{equation*}
  \lim_{\neord\to\infty}\normb{\RF-\neapx(\RF)}{\Lppsph{p}{d}}=0.
\end{equation*}
\end{remark}

\begin{proof} The proof uses Lebesgue's dominated convergence theorem.
For all $1\le p\le \infty$, by Lemma~\ref{lm:convg T by needlets-2}, $\RF\in \Lp[,\sigma_{d}]{p}{d}$ $\Pas$. For $p<\infty$, Theorem~\ref{thm:needlets.err.L} with \eqref{eq:EL.to.0} implies the almost sure convergence to zero of the $\Lp{p}{d}$-norms of needlet approximation errors:
\begin{equation}\label{eq:pnt convg T-1}
  \lim_{\neord\to\infty}\int_{\sph{d}}\bigl|\RF(\omega,\PT{x})
  -\neapx\bigl(\RF;\omega,\PT{x}\bigr)\bigr|^{p}\IntDiff{x}=0, \quad \Pas.
\end{equation}
Using Theorem~\ref{thm:needlets.err.L} again and Lemma~\ref{lm:best.approx.sph.sob} with $s=0$ gives, $\Pas$,
\begin{equation*}
  \int_{\sph{d}}\bigl|\RF(\omega,\PT{x})-\neapx\bigl(\RF;\omega,\PT{x}\bigr)\bigr|^{p}\IntDiff{x}
   \le c_{d,p,\fiN,\fis}\: E_{\floor{2^{\neord-1}}}(\RF(\omega))_{\Lp{p}{d}}^{p}
   \le c_{d,p,\fiN,\fis}\:\norm{\RF(\omega)}{\Lp{p}{d}}^{p}.
\end{equation*}
Taking expected values on both sides with the Fubini theorem and Theorem~\ref{thm:RF.nrm.Lpprob} gives
\begin{align*}
  \expect{\int_{\sph{d}}\bigl|\RF(\PT{x})-\neapx\bigl(\RF;\PT{x}\bigr)\bigr|^{p}\IntDiff{x}}
  &\le c_{d,p,\fiN,\fis} \:\expect{\int_{\sph{d}}|\RF(\PT{x})|^{p}\IntDiff{x}}\notag\\
  &= c_{d,p,\fiN,\fis} \:\int_{\sph{d}}\expect{|\RF(\PT{x})|^{p}}\IntDiff{x}\notag\\
  &\le c_{d,p,\fiN,\fis}\:\Bigl\{\expect{|\RF(\PT{e}_{d})|^{\ceil{p}}}\Bigr\}^{\frac{p}{\ceil{p}}}
  < \infty.
\end{align*}
This with \eqref{eq:pnt convg T-1} and Lebesgue's dominated convergence theorem (with respect to the probability measure $\probm$) proves \eqref{eq:neapx.Lp.converge.RF}.

For $p=\infty$, Theorem~\ref{thm:needlets.err.L} with \eqref{eq:EL.to.0} gives, for $\omega\in \probSp$,
\begin{align}\label{eq:pnt convg T inf-1}
    \normb{\RF(\omega)-\neapx(\RF;\omega)}{\Lp{\infty}{d}}\le c_{d,\fiN,\fis}\: E_{\floor{2^{\neord-1}}}(\RF(\omega))_{\Lp{\infty}{d}}\to0,
    \;\; \neord\to\infty,
\end{align}
since $\RF(\omega)$ is $\Pas$ continuous.
Also, by Lemma~\ref{lm:best.approx.sph.sob} with $s=0$,
\begin{align}\label{eq:pnt convg T inf-2}
    \expect{E_{\floor{2^{\neord-1}}}(\RF)_{\Lp{\infty}{d}}}\le c_{d}\: \expect{\norm{\RF}{\Lp{\infty}{d}}} < \infty.
\end{align}
Using Lebesgue's dominated convergence theorem again with \eqref{eq:pnt convg T inf-1} and \eqref{eq:pnt convg T inf-2} gives
\begin{align*}
    \expect{\normb{\RF-\neapx(\RF)}{\Lp{\infty}{d}}}\to0,
    \;\; \neord\to\infty,
\end{align*}
thus completing the proof.
\end{proof}

\subsection{Pointwise convergence}
The following theorem shows the pointwise convergence of the needlet decomposition for weakly isotropic random fields of even order. For $1\le p< \infty$, let $\ceil[2]{p}$ be the smallest even integer larger than or equal to $p$,
\begin{equation}\label{eq:p.ceil2}
  \ceil[2]{p} := \left\{\begin{array}{ll}
  \ceil{p}, & \ceil{p}~ \hbox{is~even},\\
  \ceil{p}+1, & \ceil{p}~ \hbox{is~odd}.
  \end{array}\right.
\end{equation}
\begin{theorem}\label{thm:converg needlet pointwise} Let $d\ge2$, $1\le p< \infty$. Let $\RF$ be a $\ceil[2]{p}$-weakly isotropic random field on $\sph{d}$. Let $\fiN$ be a needlet filter satisfying $\fiN\in \CkR$ with $\fis\ge \floor{\frac{d+3}{2}}$. Then, for each $\PT{x}\in\sph{d}$,
\begin{equation}\label{eq:converg needlet pointwise-1}
  \lim_{\neord\to\infty}\expect{\bigl|\RF(\PT{x})-\neapx(\RF;\PT{x})\bigr|^p}=0.
\end{equation}
\end{theorem}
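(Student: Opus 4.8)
The plan is to bootstrap from the already-established mean-$\Lp{p}{d}$ convergence in Theorem~\ref{thm:neapx.Lp.converge.RF}, the key device being the passage to the even integer $q:=\ceil[2]{p}\ge p$. Write $e_{\neord}:=\RF-\neapx(\RF)$ for the error field. First I would reduce the $p$-th moment to the $q$-th moment: since $q/p\ge1$, the map $\varphi(t)=t^{q/p}$ is convex on $[0,\infty)$, so applying Jensen's inequality \eqref{eq:Jensen.ineq.probab} to the nonnegative random variable $|e_{\neord}(\PT{x})|^{p}$ gives $\bigl(\expect{|e_{\neord}(\PT{x})|^{p}}\bigr)^{q/p}\le\expect{|e_{\neord}(\PT{x})|^{q}}$, hence $\expect{|e_{\neord}(\PT{x})|^{p}}\le\bigl(\expect{|e_{\neord}(\PT{x})|^{q}}\bigr)^{p/q}$. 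It therefore suffices to prove $\expect{|e_{\neord}(\PT{x})|^{q}}\to0$ as $\neord\to\infty$.

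The heart of the argument is to show $\expect{e_{\neord}(\PT{x})^{q}}$ is independent of $\PT{x}$; the whole point of choosing $q$ even is that $|e_{\neord}(\PT{x})|^{q}=e_{\neord}(\PT{x})^{q}$ becomes a genuine $q$-th power of the field, so that $q$-weak isotropy can be brought to bear. By Theorem~\ref{thm:needlets.vs.filter.approx}, $\neapx(\RF;\PT{x})=\int_{\sph{d}}\RF(\PT{y})\,\vdh{2^{\neord-1},\fiH}(\PT{x}\cdot\PT{y})\IntDiff{y}$; since this kernel is zonal and $\sigma_{d}$ is rotation invariant, the substitution $\PT{y}\mapsto\rho\PT{y}$ yields $\neapx(\RF;\rho\PT{x})=\neapx(\RF^{\rho};\PT{x})$ for every $\rho\in\RotGr$, where $\RF^{\rho}(\cdot):=\RF(\rho\,\cdot)$. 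Thus $e_{\neord}(\rho\PT{x})$ is the same integral functional of $\RF^{\rho}$ that $e_{\neord}(\PT{x})$ is of $\RF$. Expanding $e_{\neord}(\PT{x})^{q}$ into a sum of $q$-fold products of factors, each equal to $\RF(\PT{x})$ or to $\int_{\sph{d}}\RF(\PT{y})\,\vdh{2^{\neord-1},\fiH}(\PT{x}\cdot\PT{y})\IntDiff{y}$ (each integral carrying its own variable), taking expectations, and interchanging expectation with the spatial integrals by the Fubini theorem (justified since the kernel is bounded on the compact $\sph{d}$ and the $q$-point moments are finite by the generalised H\"{o}lder inequality together with $q$-weak isotropy of $\RF$), each resulting term is an integral of a $q$-point moment $\expect{\RF(\PT{z}_{1})\cdots\RF(\PT{z}_{q})}$ against a product of kernels. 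Replacing $\PT{x}$ by $\rho\PT{x}$, substituting each integration variable by its rotation, and invoking the $q$-weak isotropy of $\RF$ leaves every term unchanged, so $\expect{e_{\neord}(\rho\PT{x})^{q}}=\expect{e_{\neord}(\PT{x})^{q}}$; equivalently $e_{\neord}$ is $q$-weakly isotropic. Consequently $\expect{e_{\neord}(\PT{x})^{q}}$ is constant in $\PT{x}$, and since $\sigma_{d}$ is normalised, Fubini once more gives $\expect{e_{\neord}(\PT{x})^{q}}=\int_{\sph{d}}\expect{e_{\neord}(\PT{y})^{q}}\IntDiff{y}=\expect{\normb{\RF-\neapx(\RF)}{\Lp{q}{d}}^{q}}$.

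Finally, because $q=\ceil[2]{p}$ is an even integer we have $\ceil{q}=q$, and $\RF$ is $q$-weakly isotropic by hypothesis, so Theorem~\ref{thm:neapx.Lp.converge.RF} applies with $p$ replaced by $q$ and yields $\expect{\normb{\RF-\neapx(\RF)}{\Lp{q}{d}}^{q}}\to0$ as $\neord\to\infty$. Chaining this with the two displayed identities and the Jensen reduction establishes $\expect{|e_{\neord}(\PT{x})|^{p}}\to0$, which is \eqref{eq:converg needlet pointwise-1}. I expect the main obstacle to be the careful Fubini and change-of-variables bookkeeping in the middle step, namely upgrading the finite-dimensional $q$-weak isotropy of $\RF$ to rotation invariance of the $q$-th moment of the \emph{integral} functional $e_{\neord}(\PT{x})$; once that interchange is justified the remainder is routine.
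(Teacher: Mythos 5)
Your proposal is correct and follows essentially the same route as the paper's proof: Jensen's inequality to pass from the $p$-th to the even $\ceil[2]{p}$-th moment, rotation invariance of that even moment of the error field, conversion to the integrated $\mathbb{L}_{\ceil[2]{p}}$ error via the normalised measure and Fubini, and conclusion from Theorem~\ref{thm:neapx.Lp.converge.RF} with exponent $\ceil[2]{p}$. The middle step you prove inline (expanding the $q$-fold product, using the convolution representation, Fubini, and the rotation change of variables) is exactly what the paper packages separately as Lemma~\ref{lm:Uj.RF.rot} and Theorem~\ref{thm:T.neapx.isotr}.
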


Before we can prove Theorem~\ref{thm:converg needlet pointwise}, we first need the results of Lemma~\ref{lm:Uj.RF.rot} and Theorem~\ref{thm:T.neapx.isotr} below. We first show that the weak isotropy of $\RF$ extends to $\neapx(\RF)$. By Theorem~\ref{thm:needlets.vs.filter.approx}, the needlet approximation $\neapx(\RF)$ has the convolution representation
\begin{equation}\label{eq:neapx.RF.conv}
  \neapx(\RF;\PT{x}) = \InnerL{\RF, \vdh{2^{\neord-1},\fiH}(\PT{x}\cdot\cdot)},
\end{equation}
where $\vdh{2^{\neord-1},\fiH}(\PT{x}\cdot\PT{y})$ is the filtered kernel with the filter $\fiH$, see \eqref{eq:fiH}.
For any rotation $\rho\in \RotGr$,
\begin{align*}
  \neapx(\RF;\rho\PT{x}) &= \int_{\sph{d}}\RF(\PT{y})\:\vdh{2^{\neord-1},\fiH}(\rho\PT{x}\cdot\PT{y})\IntDiff{y}\notag\\
  &= \int_{\sph{d}}\RF(\PT{y})\:\vdh{2^{\neord-1},\fiH}(\PT{x}\cdot\rho^{-1}\PT{y})\IntDiff{y}\notag\\
  &= \int_{\sph{d}}\RF(\rho\PT{y})\:\vdh{2^{\neord-1},\fiH}(\PT{x}\cdot\PT{y})\IntDiff{y}\notag\\
  &= \InnerLb{\RF(\rho\cdot), \vdh{2^{\neord-1},\fiH}(\PT{x}\cdot\cdot)}\notag\\
  &= \neapx(\RF(\rho\cdot);\PT{x}),
\end{align*}
where the third equality exploits the rotational invariance of the integral over $\sph{d}$.
Thus we have the following formula for $\neapx(\RF)$ under rotations.
\begin{lemma}\label{lm:Uj.RF.rot} Let $d\ge2$, $\neord\in\Nz$. Let $\RF$ be a random field on $\sph{d}$. Let $\neapx(\RF)$ be needlet approximations for $\RF$ in \eqref{eq:neapx.RF} with filter $\fiN\in \CkR$, $\kappa\ge 1$. Then for $\rho\in\RotGr$,
\begin{equation*}
  \neapx(\RF;\rho\PT{x})=\neapx(\RF(\rho\cdot);\PT{x}).
\end{equation*}
\end{lemma}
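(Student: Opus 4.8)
The plan is to reduce the identity to the convolution (filtered--kernel) representation of the needlet approximation and then exploit that the filtered kernel $\vdh{2^{\neord-1},\fiH}$ is a \emph{zonal} function, hence invariant under a simultaneous rotation of its two arguments. Note first that individual needlets $\needlet{jk}$ are \emph{not} rotation--equivariant, since their centres $\pN{jk}$ are fixed; what makes the lemma work is that the \emph{full} needlet approximation operator, by Theorems~\ref{thm:needlets.vs.filter.sph.ker} and \ref{thm:needlets.vs.filter.approx}, collapses to convolution against a zonal kernel.

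Concretely, I would proceed as follows. For each $\omega$ for which $\neapx(\RF;\omega,\cdot)$ is defined (i.e.\ $\RF(\omega)\in\Lp{1}{d}$, the standing hypothesis implicit in \eqref{eq:neapx.RF}, since $\needlet{jk}\in\ContiSph{}$), Theorem~\ref{thm:needlets.vs.filter.approx} gives
\begin{equation*}
  \neapx(\RF;\PT{x}) = \InnerL{\RF,\vdh{2^{\neord-1},\fiH}(\PT{x}\cdot\cdot)} = \int_{\sph{d}}\RF(\PT{y})\,\vdh{2^{\neord-1},\fiH}(\PT{x}\cdot\PT{y})\IntDiff{y},
\end{equation*}
where $\fiH$ is the filter of \eqref{eq:fiH}; this is legitimate because $\fiN\in\CkR$ with $\kappa\ge1$ forces $\fiH\in\CkR$ (the remark after \eqref{eq:fiH.fiN}). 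Then I would replace $\PT{x}$ by $\rho\PT{x}$ and perform two elementary manipulations: (a) orthogonality of $\rho\in\RotGr$ gives $(\rho\PT{x})\cdot\PT{y}=\PT{x}\cdot(\rho^{-1}\PT{y})$, so the kernel at $(\rho\PT{x},\PT{y})$ equals the kernel at $(\PT{x},\rho^{-1}\PT{y})$; (b) the substitution $\PT{y}\mapsto\rho\PT{y}$, allowed by rotation invariance of the surface measure $\sigma_d$, turns the integral into $\int_{\sph{d}}\RF(\rho\PT{y})\,\vdh{2^{\neord-1},\fiH}(\PT{x}\cdot\PT{y})\IntDiff{y}=\InnerL{\RF(\rho\cdot),\vdh{2^{\neord-1},\fiH}(\PT{x}\cdot\cdot)}$. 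Applying the convolution representation once more identifies this last expression with $\neapx(\RF(\rho\cdot);\PT{x})$, which is exactly the chain of equalities displayed immediately before the statement.

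There is essentially no hard step; the one point deserving a moment's care is the observation that the filtered kernel depends on its two arguments only through their inner product, so that the $\rho^{-1}$ introduced in (a) is cancelled by the change of variables in (b). One should also note that the computation is entirely pointwise in $\omega$, so the identity holds simultaneously for all $\omega$ outside the null set on which $\RF(\omega)\notin\Lp{1}{d}$, i.e.\ $\Pas$ (or everywhere, if no integrability exception is needed).
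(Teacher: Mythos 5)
Your proposal is correct and is essentially identical to the paper's own argument: both pass to the convolution representation of $\neapx$ via Theorem~\ref{thm:needlets.vs.filter.approx}, use $(\rho\PT{x})\cdot\PT{y}=\PT{x}\cdot(\rho^{-1}\PT{y})$, and then change variables using the rotational invariance of $\sigma_d$. Your added remarks on the zonality of the kernel and the pointwise-in-$\omega$ nature of the identity are accurate but not needed beyond what the paper already does.
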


\begin{theorem}\label{thm:T.neapx.isotr} Let $d\ge2$, $\ordiRF\in\N$. Let $\RF$ be an $\ordiRF$-weakly isotropic random field on $\sph{d}$. Let $\neapx(\RF)$ be the semidiscrete needlet approximation of order $\neord$ for $\RF$, see \eqref{eq:neapx.f}. Then $\{\RF,\neapx(\RF): \neord\in \Nz\}$ is $\ordiRF$-weakly isotropic.
\end{theorem}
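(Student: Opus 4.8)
The plan is to verify directly the two defining conditions of an $\ordiRF$-weakly isotropic \emph{set} for the collection $\{\RF,\neapx(\RF):\neord\in\Nz\}$: finiteness of the $\ordiRF$th moment of each member at each point, and invariance of all joint moments of order $k\le\ordiRF$ under rotations. Throughout I would exploit the convolution representation \eqref{eq:neapx.RF.conv} of $\neapx(\RF)$ and the $\ordiRF$-weak isotropy of $\RF$ itself.

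For the moment condition, $\expect{|\RF(\PT{x})|^{\ordiRF}}<\infty$ is the hypothesis, so only the needlet members need attention. Starting from
\[
  \neapx(\RF;\PT{x})=\int_{\sph{d}}\RF(\PT{y})\,\vdh{2^{\neord-1},\fiH}(\PT{x}\cdot\PT{y})\IntDiff{y},
\]
I would set $M:=\normb{\vdh{2^{\neord-1},\fiH}(\PT{x}\cdot\cdot)}{\Lp{1}{d}}$, which is finite because for fixed $\neord$ the filtered kernel is a band-limited polynomial, hence bounded on the compact sphere. Applying Jensen's inequality \eqref{eq:Jensen.ineq.probab} to the convex map $t\mapsto t^{\ordiRF}$ against the normalised density $|\vdh{2^{\neord-1},\fiH}(\PT{x}\cdot\cdot)|/M$, then taking expectations, using the Fubini theorem, and invoking $\expect{|\RF(\PT{y})|^{\ordiRF}}=\expect{|\RF(\PT{e}_{d})|^{\ordiRF}}$ (the one-point case of isotropy), gives $\expect{|\neapx(\RF;\PT{x})|^{\ordiRF}}\le M^{\ordiRF}\expect{|\RF(\PT{e}_{d})|^{\ordiRF}}<\infty$.

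The main work is the invariance condition. I would fix $1\le k\le\ordiRF$, points $\PT{x}_{1},\dots,\PT{x}_{k}$, a rotation $\rho\in\RotGr$, and $k$ members of the collection; after relabelling, say the first $m$ are needlet approximations of orders $\neord_{1},\dots,\neord_{m}$ and the remaining $k-m$ are copies of $\RF$. Inserting the convolution representation only for the needlet factors turns the product into an $m$-fold spatial integral of $\prod_{i=1}^{m}\RF(\PT{y}_{i})\prod_{i=m+1}^{k}\RF(\PT{x}_{i})$ against $\prod_{i=1}^{m}\vdh{2^{\neord_{i}-1},\fiH}(\PT{x}_{i}\cdot\PT{y}_{i})$. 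Taking expectations and applying the Fubini theorem moves $\expect{\cdot}$ inside the integral. Replacing each $\PT{x}_{i}$ by $\rho\PT{x}_{i}$, I would use that the kernel depends only on the inner product, so $\rho\PT{x}_{i}\cdot\PT{y}_{i}=\PT{x}_{i}\cdot\rho^{-1}\PT{y}_{i}$, and then change variables $\PT{y}_{i}\mapsto\rho\PT{y}_{i}$ using the rotation invariance of the surface measure. This rotates every argument of $\RF$ appearing inside the expectation, while restoring the kernel factors to $\vdh{2^{\neord_{i}-1},\fiH}(\PT{x}_{i}\cdot\PT{y}_{i})$. Since the total number of arguments is $k\le\ordiRF$, the $\ordiRF$-weak isotropy of $\RF$ equates the rotated expectation with the unrotated one, and one recovers exactly the original integral; hence the joint moment is unchanged under $\rho$.

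The step requiring care is the repeated use of the Fubini theorem to interchange $\expect{\cdot}$ with the iterated spatial integrals, which needs absolute integrability of $\expect{\prod_{i}|\RF(\PT{z}_{i})|}$ against the product kernel. I would obtain this from the generalised H\"older inequality, $\expect{\prod_{i=1}^{k}|\RF(\PT{z}_{i})|}\le\prod_{i=1}^{k}\bigl(\expect{|\RF(\PT{z}_{i})|^{k}}\bigr)^{1/k}$, where $k\le\ordiRF$ forces $\expect{|\RF(\PT{z}_{i})|^{k}}<\infty$ by Lyapunov's inequality applied to the finite $\ordiRF$th moment, combined with the finiteness of each $\normb{\vdh{2^{\neord_{i}-1},\fiH}(\PT{x}_{i}\cdot\cdot)}{\Lp{1}{d}}$. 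Once integrability is secured, the change-of-variables and isotropy steps are purely formal, and the two conditions together establish that $\{\RF,\neapx(\RF):\neord\in\Nz\}$ is $\ordiRF$-weakly isotropic.
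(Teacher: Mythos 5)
Your proposal is correct and follows essentially the same route as the paper's proof: the convolution representation \eqref{eq:neapx.RF.conv}, a Minkowski/Jensen-type bound together with the $\mathbb{L}_{1}$-boundedness of the filtered kernel for the moment-finiteness condition, and then Fubini, the zonal-kernel change of variables under rotation (which the paper packages separately as Lemma~\ref{lm:Uj.RF.rot}), and the $\ordiRF$-weak isotropy of $\RF$ for the invariance of joint moments. Your explicit H\"older/Lyapunov justification of the Fubini interchanges is a detail the paper leaves implicit, but it does not change the substance of the argument.
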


\begin{proof} First, we show
\begin{equation}\label{eq:E Uj.T-1}
  \expect{|\neapx(\RF;\PT{x})|^{\ordiRF}}<\infty, \quad \mbox{for~}\PT{x}\in\sph{d}.
\end{equation}
For $\ordiRF\ge1$, by \eqref{eq:neapx.RF.conv} and Minkowski's inequality for integrals, see e.g. \cite[Appendix~A]{Stein1970},
\begin{align}\label{eq:E Uj.T-2}
  \left\{\expect{|\neapx(\RF;\PT{x})|^{\ordiRF}}\right\}^{1/\ordiRF}
  &= \normB{\int_{\sph{d}}\RF(\PT{y})\vdh{2^{\neord-1},\fiH}(\PT{x}\cdot\PT{y})\IntDiff{y}}{\Lpprob{\ordiRF}}\notag\\
  &\le \int_{\sph{d}}\norm{\RF(\PT{y})}{\Lpprob{\ordiRF}}|\vdh{2^{\neord-1},\fiH}(\PT{x}\cdot\PT{y})|\IntDiff{y}.
\end{align}
Since $\RF$ is $n$-weakly isotropic, the second part of Theorem~\ref{thm:RF.nrm.Lpprob} then gives
\begin{align}\label{eq:E Uj.T-4}
  \norm{\RF(\PT{y})}{\Lpprob{\ordiRF}} =\norm{\RF(\PT{e}_{d})}{\Lpprob{\ordiRF}}<\infty,
\end{align}
where $\PT{e}_{d}$ is the north pole of $\sph{d}$. Also, by Theorem~\ref{thm:filter.sph.ker.L1norm.UB},
\begin{equation*}
    \normb{\vdh{2^{\neord-1},\fiH}(\PT{x}\cdot\cdot)}{\Lp{1}{d}}\le c_{d,\fiN,\fis}.
\end{equation*}
This together with \eqref{eq:E Uj.T-2} and \eqref{eq:E Uj.T-4} gives
\begin{equation*}
  \left\{\expect{|\neapx(\RF;\PT{x})|^{\ordiRF}}\right\}^{1/\ordiRF}
   \le\norm{\RF(\PT{e}_{d})}{\Lpprob{\ordiRF}}\normb{\vdh{2^{\neord-1},\fiH}(\PT{x}\cdot\cdot)}{\Lp{1}{d}}
   \le c_{d,\fiN,\fis}\:\norm{\RF(\PT{e}_{d})}{\Lpprob{\ordiRF}} <\infty,
\end{equation*}
thus proving \eqref{eq:E Uj.T-1}.

We can now prove the weak isotropy of the set $\{\RF,\neapx(\RF) : \neord\in\Nz\}$. For $1\le k\le \ordiRF$, let $\neord_{1},\dots,\neord_{k}$ be $k$ positive integers and $\rho$ be a rotation on $\sph{d}$. We need to show the following two identities: for $\PT{x},\PT{x}_{1},\dots,\PT{x}_{k}\in\sph{d}$,
\begin{subequations}\label{eq:w.Isotropy T Uj.T}
  \begin{align}
    \expect{\RF(\rho\PT{x})\neapx[{\neord_{1}}](\RF;\rho\PT{x}_{1})\cdots \neapx[{\neord_{k-1}}](\RF;\rho\PT{x}_{k-1})}
    &=\expect{\RF(\PT{x})\neapx[{\neord_{1}}](\RF;\PT{x}_{1})\cdots \neapx[{\neord_{k-1}}](\RF;\PT{x}_{k-1})}\label{eq:w.Isotropy T Uj.T-a}\\[2mm]
    \expect{\neapx[{\neord_{1}}](\RF;\rho\PT{x}_{1})\cdots \neapx[{\neord_{k}}](\RF;\rho\PT{x}_{k})}&=\expect{\neapx[{\neord_{1}}](\RF;\PT{x}_{1})\cdots \neapx[{\neord_{k}}](\RF;\PT{x}_{k})}.
    \label{eq:w.Isotropy T Uj.T-b}
  \end{align}
\end{subequations}
For \eqref{eq:w.Isotropy T Uj.T-a}, by Lemma~\ref{lm:Uj.RF.rot} and \eqref{eq:neapx.RF.conv},
\begin{align*}
    &\hspace{0.55cm}\expect{\RF(\rho\PT{x})\neapx[{\neord_{1}}](\RF;\rho\PT{x}_{1})\cdots \neapx[{\neord_{k-1}}](\RF;\rho\PT{x}_{k-1})}\\
        &=\expect{\RF(\rho\PT{x})\neapx[{\neord_{1}}](\RF(\rho\cdot);\PT{x}_{1})\cdots \neapx[{\neord_{k-1}}](\RF(\rho\cdot);\PT{x}_{k-1})}\\
        & = \expect{\underbrace{\int_{\sph{d}}\cdots\int_{\sph{d}}}_{k-1}\RF(\rho\PT{x})\prod_{i=1}^{k-1}\left(\RF(\rho\PT{z}_{i})\vdh{2^{\neord_{i}-1},\fiH}(\PT{x}_{i}\cdot\PT{z}_{i})\right)\IntDiffs{\PT{z}_{1}}\cdots\IntDiffs{\PT{z}_{k-1}}}\\
        &=\int_{\sph{d}}\cdots\int_{\sph{d}}\expect{\RF(\rho\PT{x})\RF(\rho\PT{z}_{1})\cdots \RF(\rho\PT{z}_{k-1})}\prod_{i=1}^{k-1}\left(\vdh{2^{\neord_{i}-1},\fiH}(\PT{x}_{i}\cdot\PT{z}_{i})\right)\IntDiffs{\PT{z}_{1}}\cdots\IntDiffs{\PT{z}_{k-1}}\\
        & = \int_{\sph{d}}\cdots\int_{\sph{d}}\expect{\RF(\PT{x})\RF(\PT{z}_{1})\cdots \RF(\PT{z}_{k-1})}\prod_{i=1}^{k-1}\left(\vdh{2^{\neord_{i}-1},\fiH}(\PT{x}_{i}\cdot\PT{z}_{i})\right)\IntDiffs{\PT{z}_{1}}\cdots\IntDiffs{\PT{z}_{k-1}}\\
        & = \expect{\RF(\PT{x})\neapx[{\neord_{1}}](\RF;\PT{x}_{1})\cdots \neapx[{\neord_{k-1}}](\RF;\PT{x}_{k-1})},
  \end{align*}
where the third and last equalities use the Fubini theorem and the fourth equality uses the $\ordiRF$-weak isotropy of $\RF$.
The proof of \eqref{eq:w.Isotropy T Uj.T-b} follows by a similar argument.
\end{proof}

Using a similar argument to the proof for Theorem~\ref{thm:T.neapx.isotr} with the integral representation in \eqref{eq:Uj.RF.conv} for the order-$j$ contribution $\parsum{j}(\RF)$,
we can prove the weakly isotropic property for the set $\{\RF,\parsum{j}(\RF): j=1,2,\dots\}$, as stated in the following theorem.

\begin{theorem}\label{thm:T.parsum.isotr} Let $d\ge2$, $\ordiRF\in\N$. Let $\RF$ be an $\ordiRF$-weakly isotropic random field on $\sph{d}$. Let $\parsum{j}(\RF)$ be the order-$j$ contribution in \eqref{eq:Uj.RF} for $\neapx(\RF)$. Then $\{\RF,\parsum{j}(\RF):j\in\Nz\}$ is $\ordiRF$-weakly isotropic.
\end{theorem}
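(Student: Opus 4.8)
The plan is to follow the proof of Theorem~\ref{thm:T.neapx.isotr} almost verbatim, with the filtered kernel $\vdh{2^{\neord-1},\fiH}$ replaced throughout by the zonal kernel $\vdh{2^{j-1},\fiN^{2}}$ appearing in the convolution representation \eqref{eq:Uj.RF.conv} of the order-$j$ contribution. As in that proof, the verification splits into two parts: a moment bound showing that $\parsum{j}(\RF;\PT{x})$ has a finite $\ordiRF$th moment, followed by the mixed-moment identities expressing weak isotropy of the whole collection $\{\RF,\parsum{j}(\RF):j\in\Nz\}$.

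First I would bound $\expect{|\parsum{j}(\RF;\PT{x})|^{\ordiRF}}$ for each $j\in\Nz$ and $\PT{x}\in\sph{d}$. Starting from \eqref{eq:Uj.RF.conv} and applying Minkowski's inequality for integrals exactly as in \eqref{eq:E Uj.T-2} gives
\[
 \bigl\{\expect{|\parsum{j}(\RF;\PT{x})|^{\ordiRF}}\bigr\}^{1/\ordiRF}
 \le \int_{\sph{d}}\norm{\RF(\PT{y})}{\Lpprob{\ordiRF}}\,\bigl|\vdh{2^{j-1},\fiN^{2}}(\PT{x}\cdot\PT{y})\bigr|\IntDiff{y}.
\]
By the second part of Theorem~\ref{thm:RF.nrm.Lpprob}, $\norm{\RF(\PT{y})}{\Lpprob{\ordiRF}}=\norm{\RF(\PT{e}_{d})}{\Lpprob{\ordiRF}}$ is finite and independent of $\PT{y}$, while, since $\fiN$ is compactly supported, $\vdh{2^{j-1},\fiN^{2}}$ is a spherical polynomial, hence bounded on $\sph{d}$, so $\normb{\vdh{2^{j-1},\fiN^{2}}(\PT{x}\cdot\cdot)}{\Lp{1}{d}}<\infty$. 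This yields the analogue of \eqref{eq:E Uj.T-1} with $\neapx$ replaced by $\parsum{j}$.

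Next I would record the rotation formula $\parsum{j}(\RF;\rho\PT{x})=\parsum{j}(\RF(\rho\cdot);\PT{x})$ for $\rho\in\RotGr$, obtained from \eqref{eq:Uj.RF.conv} via the identity $\rho\PT{x}\cdot\PT{y}=\PT{x}\cdot\rho^{-1}\PT{y}$, the change of variables $\PT{y}\mapsto\rho\PT{y}$ (rotational invariance of $\sigma_{d}$), and the fact that $\vdh{2^{j-1},\fiN^{2}}$ is zonal --- the same computation that precedes Lemma~\ref{lm:Uj.RF.rot}. Then, for $1\le k\le\ordiRF$, any $j_{1},\dots,j_{k}\in\Nz$, any $\rho\in\RotGr$ and any $\PT{x},\PT{x}_{1},\dots,\PT{x}_{k}\in\sph{d}$, I would prove
\[
 \expect{\RF(\rho\PT{x})\,\parsum{j_{1}}(\RF;\rho\PT{x}_{1})\cdots\parsum{j_{k-1}}(\RF;\rho\PT{x}_{k-1})}
 =\expect{\RF(\PT{x})\,\parsum{j_{1}}(\RF;\PT{x}_{1})\cdots\parsum{j_{k-1}}(\RF;\PT{x}_{k-1})}
\]
together with the analogous identity with the first factor $\RF(\rho\PT{x})$ deleted. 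The mechanism is that of \eqref{eq:w.Isotropy T Uj.T}: expand each $\parsum{j_{i}}(\RF;\rho\PT{x}_{i})=\int_{\sph{d}}\RF(\rho\PT{z}_{i})\,\vdh{2^{j_{i}-1},\fiN^{2}}(\PT{x}_{i}\cdot\PT{z}_{i})\IntDiffs{\PT{z}_{i}}$, multiply out, interchange the expectation with the resulting surface integral by Fubini, apply the $\ordiRF$-weak isotropy of $\RF$ to replace $\expect{\RF(\rho\PT{x})\RF(\rho\PT{z}_{1})\cdots\RF(\rho\PT{z}_{k-1})}$ by $\expect{\RF(\PT{x})\RF(\PT{z}_{1})\cdots\RF(\PT{z}_{k-1})}$, and apply Fubini once more to reassemble the contributions.

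The only genuine obstacle is justifying Fubini's theorem, i.e. verifying absolute integrability of $\prod_{i}\bigl(\RF(\rho\PT{z}_{i})\,\vdh{2^{j_{i}-1},\fiN^{2}}(\PT{x}_{i}\cdot\PT{z}_{i})\bigr)$ against $\probm\otimes\sigma_{d}^{\otimes(k-1)}$; here the generalized H\"older inequality gives $\expect{\prod_{i}|\RF(\rho\PT{z}_{i})|}\le\prod_{i}\norm{\RF(\PT{e}_{d})}{\Lpprob{\ordiRF}}$ (using $k\le\ordiRF$ and Theorem~\ref{thm:RF.nrm.Lpprob}), which combined with the $\mathbb{L}_{1}$-finiteness of each kernel established in the first step dominates the integrand by an integrable function and also makes the moment $\expect{\RF(\PT{x})\RF(\PT{z}_{1})\cdots\RF(\PT{z}_{k-1})}$ well defined. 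Finally, the moment bound of the first step supplies the finiteness requirement $\expect{|\parsum{j}(\RF;\PT{x})|^{\ordiRF}}<\infty$ in the definition of an $\ordiRF$-weakly isotropic set of random fields, completing the verification.
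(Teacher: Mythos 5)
Your proposal is correct and is essentially the paper's own argument: the paper proves Theorem~\ref{thm:T.parsum.isotr} precisely by repeating the proof of Theorem~\ref{thm:T.neapx.isotr} with the kernel $\vdh{2^{\neord-1},\fiH}$ replaced by $\vdh{2^{j-1},\fiN^{2}}$ from \eqref{eq:Uj.RF.conv}, which is exactly what you do. Your explicit justification of the Fubini interchanges via the generalized H\"older inequality, and your observation that the $\mathbb{L}_{1}$-bound on the kernel follows simply from its being a spherical polynomial, are minor refinements of details the paper leaves implicit.
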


\begin{proof}[Proof of Theorem~\ref{thm:converg needlet pointwise}]
To prove \eqref{eq:converg needlet pointwise-1}, we let $f_{\neord}(\PT{x}):=|\RF(\PT{x})-\neapx(\RF;\PT{x})|^{p}$. The fact that $\ceil[2]{p}$ is even, see \eqref{eq:p.ceil2}, together with Theorem~\ref{thm:T.neapx.isotr} gives, for each point $\PT{x}\in \sph{d}$ and each rotation $\rho\in \RotGr$,
\begin{align}\label{eq:T Uj isotropy}
    \expect{\bigl|\RF(\rho\PT{x})-\neapx(\RF;\rho\PT{x})\bigr|^{\ceil[2]{p}}}=\expect{\bigl|\RF(\PT{x})-\neapx(\RF;\PT{x})\bigr|^{\ceil[2]{p}}}.
\end{align}
Then for $\PT{x}\in\sph{d}$, using Jensen's inequality \eqref{eq:Jensen.ineq.probab} and $p\le \ceil[2]{p}$,
\begin{align}\label{eq:converg needlet pnt-1}
  \expect{\bigl|\RF(\PT{x})-\neapx(\RF;\PT{x})\bigr|^{p}}
  & =\expect{|f_{\neord}(\PT{x})|}\le \left\{\expect{|f_{\neord}(\PT{x})|^{\frac{\ceil[2]{p}}{p}}}\right\}^{\frac{p}{\ceil[2]{p}}}\notag\\[0.2cm]
  & = \left\{\expect{\bigl|\RF(\PT{x})-\neapx(\RF;\PT{x})\bigr|^{\ceil[2]{p}}}\right\}^{\frac{p}{\ceil[2]{p}}}\notag\\
  & = \left\{\int_{\sph{d}}\expect{\bigl|\RF(\PT{y})-\neapx(\RF;\PT{y})\bigr|^{\ceil[2]{p}}}\IntDiff{y}\right\}^{\frac{p}{\ceil[2]{p}}}\notag\\
  & = \left\{\expect{\int_{\sph{d}}\bigl|\RF(\PT{y})-\neapx(\RF;\PT{y})\bigr|^{\ceil[2]{p}}\IntDiff{y}}\right\}^{\frac{p}{\ceil[2]{p}}},
\end{align}
where the third equality uses \eqref{eq:T Uj isotropy} and the fact that $\sigma_{d}$ is normalised, and the last equality uses the Fubini theorem. By Theorem~\ref{thm:neapx.Lp.converge.RF}, the last formula of \eqref{eq:converg needlet pnt-1} converges to zero. This then gives
\begin{align*}
  \expect{\bigl|\RF(\PT{x})-\neapx(\RF;\PT{x})\bigr|^{p}}\to 0,\;\; \neord\to \infty,
\end{align*}
completing the proof.
\end{proof}

\subsection{Boundedness of needlet approximation}\label{subsec:Bd.neapx.Lppsph}
In this subsection we prove that the $\Lppsph{p}{d}$-norm, $1\le p<\infty$, of $\neapx(\RF)$ is bounded by the $\Lppsph{\ceil{p}}{d}$-norm of the random field $\RF$ when $\RF$ is $\ceil{p}$-weakly isotropic and the filter $\fiN$ is sufficiently smooth. We state the result in the following theorem.
\begin{theorem}[Boundedness of semidiscrete needlet approximation]\label{thm:Bd.neapx.Lppsph} Let $d\ge 2$, $1\le p< \infty$. Let $\RF$ be a $\ceil{p}$-weakly isotropic random field on $\sph{d}$. Let $\fiN$ be a needlet filter in \eqref{subeqs:fiN} satisfying $\fiN\in\CkR$ and $\fis\ge \floor{\frac{d+3}{2}}$. Then
\begin{equation}\label{eq:norm.Lppsph.neapx.RF}
    \normb{\neapx(\RF)}{\Lppsph{p}{d}}\le c\:\norm{\RF}{\Lppsph{\ceil{p}}{d}},
\end{equation}
where the constant $c$ depends only on $d$, $\fiN$ and $\fis$.
\end{theorem}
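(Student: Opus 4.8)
The plan is to combine the convolution form of the needlet approximation with Minkowski's integral inequality, the uniform $\Lp{1}{d}$-bound on the filtered kernel from Theorem~\ref{thm:filter.sph.ker.L1norm.UB}, and the isotropy estimate of Theorem~\ref{thm:RF.nrm.Lpprob}, which I will invoke twice. First I would note that, since $\RF$ is $\ceil{p}$-weakly isotropic with $p\ge1$, Lemma~\ref{lm:convg T by needlets-2} gives $\RF(\omega)\in\Lp{p}{d}\subset\Lp{1}{d}$ $\Pas$, so $\neapx(\RF;\omega)$ is well defined and, by Theorem~\ref{thm:needlets.vs.filter.approx}, admits the representation
\[
\neapx(\RF;\omega,\PT{x}) = \int_{\sph{d}} \RF(\omega,\PT{y})\,\vdh{2^{\neord-1},\fiH}(\PT{x}\cdot\PT{y})\,\IntDiff{y},\qquad \PT{x}\in\sph{d}.
\]
Applying Tonelli's theorem to the nonnegative integrand $|\neapx(\RF;\omega,\PT{x})|^{p}$ then reduces the claim to a uniform (in $\PT{x}$ and $\neord$) estimate for $\norm{\neapx(\RF;\PT{x})}{\Lpprob{p}}$, since $\normb{\neapx(\RF)}{\Lppsph{p}{d}}^{p}=\int_{\sph{d}}\norm{\neapx(\RF;\PT{x})}{\Lpprob{p}}^{p}\IntDiff{x}$.

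The core step would be to fix $\PT{x}\in\sph{d}$ and apply Minkowski's inequality for integrals in $\Lpprob{p}$ to the convolution formula above (as in the proof of Theorem~\ref{thm:T.neapx.isotr}), obtaining
\[
\norm{\neapx(\RF;\PT{x})}{\Lpprob{p}} \le \int_{\sph{d}} \norm{\RF(\PT{y})}{\Lpprob{p}}\,\bigl|\vdh{2^{\neord-1},\fiH}(\PT{x}\cdot\PT{y})\bigr|\,\IntDiff{y}.
\]
Now Theorem~\ref{thm:RF.nrm.Lpprob} bounds $\norm{\RF(\PT{y})}{\Lpprob{p}}$ by $\norm{\RF(\PT{e}_{d})}{\Lpprob{\ceil{p}}}<\infty$ uniformly in $\PT{y}$, and Theorem~\ref{thm:filter.sph.ker.L1norm.UB} — applicable because $\fiH\in\CkR$ with $\fis\ge\floor{\frac{d+3}{2}}$ and, by \eqref{eq:fiH}, $\fiH$ is constant on $[0,1]$ — bounds $\normb{\vdh{2^{\neord-1},\fiH}(\PT{x}\cdot\cdot)}{\Lp{1}{d}}$ by $c_{d,\fiN,\fis}$ uniformly in $\PT{x}$ and $\neord$. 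Multiplying these two bounds and integrating over $\sph{d}$, with $\sigma_{d}$ normalised, would give $\normb{\neapx(\RF)}{\Lppsph{p}{d}} \le c_{d,\fiN,\fis}\,\norm{\RF(\PT{e}_{d})}{\Lpprob{\ceil{p}}}$.

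Finally I would rewrite $\norm{\RF(\PT{e}_{d})}{\Lpprob{\ceil{p}}}$ as $\norm{\RF}{\Lppsph{\ceil{p}}{d}}$: applying Theorem~\ref{thm:RF.nrm.Lpprob} a second time, now with the \emph{integer} exponent $\ceil{p}$ (legitimate since $\RF$ is $\ceil{\ceil{p}}=\ceil{p}$-weakly isotropic), gives $\norm{\RF(\PT{x})}{\Lpprob{\ceil{p}}}=\norm{\RF(\PT{e}_{d})}{\Lpprob{\ceil{p}}}$ for every $\PT{x}\in\sph{d}$, so by Tonelli's theorem $\norm{\RF}{\Lppsph{\ceil{p}}{d}}^{\ceil{p}} = \int_{\sph{d}}\expect{|\RF(\PT{x})|^{\ceil{p}}}\IntDiff{x} = \expect{|\RF(\PT{e}_{d})|^{\ceil{p}}}=\norm{\RF(\PT{e}_{d})}{\Lpprob{\ceil{p}}}^{\ceil{p}}$. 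Combining this with the previous display yields \eqref{eq:norm.Lppsph.neapx.RF} with $c=c_{d,\fiN,\fis}$. I do not anticipate a real obstacle: the genuinely technical analysis is already packaged in Theorems~\ref{thm:filter.sph.ker.UB}--\ref{thm:filter.sph.ker.L1norm.UB}, and the only points needing care are verifying the hypotheses of Theorem~\ref{thm:filter.sph.ker.L1norm.UB} for the composite filter $\fiH$, the two applications of the isotropy estimate (once with the non-integer exponent $p$, once with the integer exponent $\ceil{p}$), and the bookkeeping of the Tonelli and Minkowski interchanges.
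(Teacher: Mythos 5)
Your proposal is correct and follows essentially the same route as the paper's own proof: the convolution representation $\neapx(\RF;\PT{x})=\InnerL{\RF,\vdh{2^{\neord-1},\fiH}(\PT{x}\cdot\cdot)}$, Fubini/Tonelli, Minkowski's inequality for integrals, the isotropy bound of Theorem~\ref{thm:RF.nrm.Lpprob}, and the uniform $\Lp{1}{d}$-bound on the filtered kernel from Theorem~\ref{thm:filter.sph.ker.L1norm.UB}. Your final step, rewriting $\norm{\RF(\PT{e}_{d})}{\Lpprob{\ceil{p}}}$ as $\norm{\RF}{\Lppsph{\ceil{p}}{d}}$ via the integer case of Theorem~\ref{thm:RF.nrm.Lpprob} and Tonelli, is precisely the content of the paper's Lemma~\ref{lm:RF.Lpp.Lppsph.nrms}, which you have simply re-derived inline.
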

We need the following lemma to prove Theorem~\ref{thm:Bd.neapx.Lppsph}.
\begin{lemma}\label{lm:RF.Lpp.Lppsph.nrms} Let $d\ge2$, $\ordiRF\in\N$. Let $\RF$ be an $\ordiRF$-weakly isotropic random field on $\sph{d}$. Then
\begin{align*}
    \norm{\RF(\PT{e}_{d})}{\Lpprob{\ordiRF}}
      =  \norm{\RF}{\Lppsph{\ordiRF}{d}}.
\end{align*}
\end{lemma}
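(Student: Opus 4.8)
The plan is to reduce the asserted identity to the computation with the Fubini theorem and isotropy that is already carried out inside the proof of Lemma~\ref{lm:convg T by needlets-1}. Writing out the $\Lppsph{\ordiRF}{d}$-norm with respect to the product measure $\prodpsphm$ and applying the Fubini theorem --- which is legitimate without any integrability hypothesis because the integrand $|\RF(\omega,\PT{x})|^{\ordiRF}$ is non-negative and $\mathcal{F}\otimes\mathscr{B}(\sph{d})$-measurable --- I would first write
\begin{equation*}
  \norm{\RF}{\Lppsph{\ordiRF}{d}}^{\ordiRF}
  = \expect{\int_{\sph{d}}|\RF(\PT{x})|^{\ordiRF}\IntDiff{x}}
  = \int_{\sph{d}}\expect{|\RF(\PT{x})|^{\ordiRF}}\IntDiff{x}.
\end{equation*}

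Next I would invoke the $\ordiRF$-weak isotropy of $\RF$: for each $\PT{x}\in\sph{d}$ pick a rotation $\rho\in\RotGr$ with $\rho\PT{e}_{d}=\PT{x}$; then $\expect{|\RF(\PT{x})|^{\ordiRF}}=\expect{|\RF(\PT{e}_{d})|^{\ordiRF}}$, exactly the step used in the proofs of Lemma~\ref{lm:convg T by needlets-1} and Theorem~\ref{thm:RF.nrm.Lpprob}. Since $\expect{|\RF(\PT{x})|^{\ordiRF}}$ is therefore constant in $\PT{x}$ and $\sigma_{d}$ is a probability measure on $\sph{d}$,
\begin{equation*}
  \int_{\sph{d}}\expect{|\RF(\PT{x})|^{\ordiRF}}\IntDiff{x}
  = \expect{|\RF(\PT{e}_{d})|^{\ordiRF}}
  = \norm{\RF(\PT{e}_{d})}{\Lpprob{\ordiRF}}^{\ordiRF},
\end{equation*}
and taking $\ordiRF$th roots finishes the argument.

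There is essentially no real obstacle here: the statement is just a re-reading of the identity $\expect{\int_{\sph{d}}|\RF(\PT{x})|^{\ordiRF}\IntDiff{x}}=\expect{|\RF(\PT{e}_{d})|^{\ordiRF}}$ already established inside the proof of Lemma~\ref{lm:convg T by needlets-1}, combined with the definition of the $\Lppsph{\ordiRF}{d}$-norm. The only point worth a sentence of care is the rotation-invariance $\expect{|\RF(\PT{x})|^{\ordiRF}}=\expect{|\RF(\PT{e}_{d})|^{\ordiRF}}$: for even $\ordiRF$ this is immediate from the product-moment form of $\ordiRF$-weak isotropy by taking all $\ordiRF$ evaluation points equal, and in the remaining case it is precisely the use of weak isotropy already made (and relied upon) in Lemma~\ref{lm:convg T by needlets-1}, so I would simply reuse that.
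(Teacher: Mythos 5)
Your argument is the same as the paper's: the paper's one-line proof also just chains $\norm{\RF(\PT{e}_{d})}{\Lpprob{\ordiRF}}^{\ordiRF}=\expect{|\RF(\PT{e}_{d})|^{\ordiRF}}=\int_{\sph{d}}\expect{|\RF(\PT{x})|^{\ordiRF}}\IntDiff{x}=\norm{\RF}{\Lppsph{\ordiRF}{d}}^{\ordiRF}$ using $\ordiRF$-weak isotropy and the Fubini theorem, merely written in the opposite direction. Your extra remark about the rotation invariance of $\expect{|\RF(\PT{x})|^{\ordiRF}}$ for odd $\ordiRF$ is a fair observation, but the paper relies on exactly the same step in Lemma~\ref{lm:convg T by needlets-1} and Theorem~\ref{thm:RF.nrm.Lpprob}, so there is no divergence from its treatment.
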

\begin{proof} The $\ordiRF$-weak isotropy of $\RF$ and the Fubini theorem give
\begin{align*}
    \norm{\RF(\PT{e}_{d})}{\Lpprob{\ordiRF}}
      = \left\{\expect{\bigl|\RF(\PT{e}_{d})\bigr|^{\ordiRF}}\right\}^{\frac{1}{\ordiRF}}
      = \left\{\int_{\sph{d}}\expect{\bigl|\RF(\PT{x})\bigr|^{\ordiRF}}\IntDiff{x}\right\}^{\frac{1}{\ordiRF}}
      = \norm{\RF}{\Lppsph{\ordiRF}{d}},
\end{align*}
completing the proof.
\end{proof}
\begin{proof}[Proof of Theorem~\ref{thm:Bd.neapx.Lppsph}]
For $1\le p<\infty$, by the Fubini theorem and \eqref{eq:neapx.RF.conv},
\begin{align*}
  \normb{\neapx(\RF)}{\Lppsph{p}{d}}^{p}
  &= \int_{\sph{d}}\int_{\probSp}|\neapx(\RF;\omega,\PT{x})|^{p}\:\Dpb\IntDiff{x}\\
  &= \int_{\sph{d}}\normB{\int_{\sph{d}}\RF(\PT{y})\vdh{2^{\neord-1},\fiH}(\PT{x}\cdot\PT{y})\IntDiff{y}}{\Lpprob{p}}^{p}\IntDiff{x}\notag\\
  &\le \int_{\sph{d}}\left|\int_{\sph{d}}\norm{\RF(\PT{y})}{\Lpprob{p}}|\vdh{2^{\neord-1},\fiH}(\PT{x}\cdot\PT{y})|\IntDiff{y}\right|^{p}\IntDiff{x},
\end{align*}
where the inequality uses the Minkowski's inequality for integrals, see e.g. \cite[Appendix~A]{Stein1970}.
By Theorem~\ref{thm:RF.nrm.Lpprob},
\begin{equation*}
\norm{\RF(\PT{y})}{\Lpprob{p}}
  \le \norm{\RF(\PT{e}_{d})}{\Lpprob{\ceil{p}}},
\end{equation*}
where $\PT{e}_{d}$ is the north pole of $\sph{d}$. This with Lemma~\ref{thm:filter.sph.ker.L1norm.UB} then gives
\begin{align*}
  \normb{\neapx(\RF)}{\Lppsph{p}{d}}^{p}
  \le \norm{\RF(\PT{e}_{d})}{\Lpprob{\ceil{p}}}^{p}\int_{\sph{d}}\normb{\vdh{2^{\neord-1},\fiH}(\PT{x}\cdot\cdot)}{\Lp{1}{d}}^{p}\IntDiff{x}
  \le c_{d,\fiN,\fis}^{p}\:\norm{\RF(\PT{e}_{d})}{\Lpprob{\ceil{p}}}^{p}.
\end{align*}
This with Lemma~\ref{lm:RF.Lpp.Lppsph.nrms} gives \eqref{eq:norm.Lppsph.neapx.RF}.
\end{proof}

\section{Approximation errors for smooth random fields}\label{sec:truncate err}

This section studies approximation errors for needlet approximations. We prove that for a $2$-weakly isotropic random field $\RF$ on $\sph{d}$, the order of convergence of the approximation error depends upon the rate of decay of the tail of the \emph{angular power spectrum}.

\subsection{Angular power spectrum}\label{subsec:angular.power.spectrum}

Let $\RF$ be a $2$-weakly isotropic random field on $\sph{d}$. The \emph{centered random field} corresponding to $\RF$ is
\begin{equation}\label{eq:RFc}
    \RFc(\omega,\PT{x}) := \RF(\omega,\PT{x}) - \expect{\RF(\PT{x})}.
\end{equation}
Because $\RF$ is $2$-weakly isotropic, the expected value $\expect{\RF(\PT{x})}$ is independent of $\PT{x}$.
The covariance $\expect{\RFc(\PT{x})\RFc(\PT{y})}$, by virtue of its rotational invariance,
is a zonal kernel on $\sph{d}$
\begin{equation}\label{eq:covariance.G}
  \covarRF(\PT{x}\cdot\PT{y}):=\expect{\RFc(\PT{x})\RFc(\PT{y})}.
\end{equation}
This zonal function $\covarRF(\cdot)$ is said to be the \emph{covariance function} for $\RF$.
Given $d\ge2$, let $\alpha:=(d-2)/2$.
When $\covarRF(\cdot)$ is in $\Lpw[{\Jw[\alpha,\alpha]}]{2}$ it has a convergent Fourier expansion
\begin{equation*}
  \covarRF \sim \sum_{\ell=0}^{\infty}\APS[(d)]{\ell}\shd \NGegen{\ell},
\end{equation*}
where the convergence is in the $\Lpw[{\Jw[\alpha,\alpha]}]{2}$ sense.
The set of Fourier coefficients
\begin{equation*}
    \APS{\ell}:=\APS[(d)]{\ell}
    := \int_{\sph{d}}\covarRF(\PT{x}\cdot\PT{y}) \NGegen{\ell}(\PT{x}\cdot\PT{y}) \IntDiff{x}
\end{equation*}
is said to be the \emph{angular power spectrum} for the random field $\RF$. Using the property of zonal functions,
\begin{equation*}
    \APS{\ell} = \frac{|\sph{d-1}|}{|\sph{d}|} \int_{-1}^{1} \covarRF(t)\NGegen{\ell}(t)(1-t^{2})^{\frac{d-2}{2}} \IntD{t}.
\end{equation*}
By the addition theorem \eqref{eq:addition.theorem} we can write
\begin{equation}\label{eq:covariance.G.expan}
  \expect{\RFc(\PT{x})\RFc(\PT{y})}=\covarRF(\PT{x}\cdot\PT{y})\sim \sum_{\ell=0}^{\infty} \APS{\ell} \shd \NGegen{\ell}(\PT{x}\cdot\PT{y})=\sum_{\ell=0}^{\infty} \APS{\ell} \sum_{m=1}^{\shd} \shY(\PT{x})\shY(\PT{y}).
\end{equation}
As a function of $\PT{x}\in \sph{d}$, the expansion in \eqref{eq:covariance.G.expan} converges in $\Lp{2}{d}$. The orthogonality of $\shY$ then gives
\begin{equation}\label{eq:covar.G.proj}
  \InnerLb{\covarRF(\cdot\cdot\PT{y}),\shY(\cdot)} = \APS{\ell} \shY(\PT{y}).
\end{equation}

We define \emph{Fourier coefficients} for a random field $\RF$ by, cf. \eqref{eq:Fo coeff},
\begin{equation}\label{eq:Fcoe.RF}
  \widehat{\RF}_{\ell m} := \InnerL{\RF, \shY},\quad \ell\ge0,\; 1\le m\le \shd.
\end{equation}

The formula in \eqref{eq:covar.G.proj} implies the following ``orthogonality'' of Fourier coefficients $\RFcFcoe$ for $\RFc$.
It is a trivial generalisation of \cite[p.~125]{MaPe2011} from $\sph{2}$ to higher dimensional spheres.
\begin{lemma}\label{lm:orth.Fcoe.RF} For $d\ge2$, let $\alpha:=(d-2)/2$. Let $\RF$ be a $2$-weakly isotropic random field on $\sph{d}$ and let $\covarRF(\PT{x}\cdot\PT{y})$ be the covariance function for $\RF$ in \eqref{eq:covariance.G} satisfying $\covarRF(\cdot)\in \Lpw[{\Jw[\alpha,\alpha]}]{1}$. Then for $\ell,\ell'\ge0$ and $m,m'=1,\dots,\shd$,
\begin{equation}\label{eq:orth.Fcoe.RF}
  \expect{\RFcFcoe\RFcFcoe[\ell' m']}=\APS{\ell}\delta_{\ell \ell'} \delta_{m m'},
\end{equation}
where $\RFc$ is given by \eqref{eq:RFc} and $\delta_{\ell\ell'}$ is the Kronecker delta.
\end{lemma}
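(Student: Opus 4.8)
The plan is to prove the identity \eqref{eq:orth.Fcoe.RF} by expressing the product $\RFcFcoe\RFcFcoe[\ell' m']$ as a double integral over $\sph{d}\times\sph{d}$, taking expectations, and exploiting the Fubini theorem together with the projection formula \eqref{eq:covar.G.proj} derived from the angular power spectrum expansion. First I would write, using the definition \eqref{eq:Fcoe.RF} applied to $\RFc$,
\begin{equation*}
  \RFcFcoe\,\RFcFcoe[\ell' m'] = \int_{\sph{d}}\int_{\sph{d}} \RFc(\PT{x})\RFc(\PT{y})\,\shY(\PT{x})\,\shY[\ell' m'](\PT{y})\,\IntDiff{x}\,\IntDiff{y}.
\end{equation*}
Taking $\expect{\cdot}$ on both sides and interchanging expectation with the double integral (justified by the Fubini theorem, since the integrability hypothesis $\covarRF(\cdot)\in\Lpw[{\Jw[\alpha,\alpha]}]{1}$ controls $\expect{|\RFc(\PT{x})\RFc(\PT{y})|}$ via $|\expect{\RFc(\PT{x})\RFc(\PT{y})}|\le \sqrt{\covarRF(1)}\,\sqrt{\covarRF(1)}$ bounded, or more carefully by Cauchy--Schwarz in $\probSp$ reducing to $\covarRF(1)<\infty$) gives
\begin{equation*}
  \expect{\RFcFcoe\,\RFcFcoe[\ell' m']} = \int_{\sph{d}}\int_{\sph{d}} \covarRF(\PT{x}\cdot\PT{y})\,\shY(\PT{x})\,\shY[\ell' m'](\PT{y})\,\IntDiff{x}\,\IntDiff{y}.
\end{equation*}

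Next I would evaluate the inner integral over $\PT{x}$. By \eqref{eq:covar.G.proj}, for each fixed $\PT{y}$,
\begin{equation*}
  \int_{\sph{d}}\covarRF(\PT{x}\cdot\PT{y})\,\shY(\PT{x})\,\IntDiff{x} = \InnerLb{\covarRF(\cdot\cdot\PT{y}),\shY(\cdot)} = \APS{\ell}\,\shY(\PT{y}).
\end{equation*}
Substituting this into the double integral and then integrating over $\PT{y}$ yields
\begin{equation*}
  \expect{\RFcFcoe\,\RFcFcoe[\ell' m']} = \APS{\ell}\int_{\sph{d}}\shY(\PT{y})\,\shY[\ell' m'](\PT{y})\,\IntDiff{y} = \APS{\ell}\,\delta_{\ell\ell'}\,\delta_{mm'},
\end{equation*}
where the last equality is the orthonormality of the spherical harmonic basis $\{\shY\}$. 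This completes the argument.

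The main obstacle, and the only place requiring genuine care rather than bookkeeping, is the justification of the Fubini interchange of $\expect{\cdot}$ with the double spherical integral: one must verify that $\expect{\int_{\sph{d}}\int_{\sph{d}}|\RFc(\PT{x})\RFc(\PT{y})\shY(\PT{x})\shY[\ell'm'](\PT{y})|\,\IntDiff{x}\,\IntDiff{y}}<\infty$. Since $\shY$ and $\shY[\ell'm']$ are bounded on the compact sphere, it suffices to bound $\expect{|\RFc(\PT{x})\RFc(\PT{y})|}$, and by the Cauchy--Schwarz inequality in $\Lpprob{2}$ this is at most $\bigl(\expect{\RFc(\PT{x})^2}\bigr)^{1/2}\bigl(\expect{\RFc(\PT{y})^2}\bigr)^{1/2}=\covarRF(1)$, which is finite under the stated hypothesis (indeed $\covarRF(\cdot)\in\Lpw[{\Jw[\alpha,\alpha]}]{1}$ together with $2$-weak isotropy forces $\covarRF(1)=\expect{\RFc(\PT{e}_d)^2}<\infty$, using Theorem~\ref{thm:RF.nrm.Lpprob} applied to $\RFc$, which is itself $2$-weakly isotropic after centering). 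One should also note that \eqref{eq:covar.G.proj} is stated as a consequence of $\Lp{2}{d}$-convergence of the expansion \eqref{eq:covariance.G.expan} in $\PT{x}$, so it may be cleanest to first record that the centered field $\RFc$ inherits $2$-weak isotropy and that its covariance coincides with that of $\RF$, so all the angular-power-spectrum machinery of Section~\ref{subsec:angular.power.spectrum} applies verbatim to $\RFc$.
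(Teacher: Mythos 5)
Your proposal is correct and follows essentially the same route as the paper's proof: express the product of Fourier coefficients as a double integral, interchange expectation and integration by Fubini, apply the projection identity \eqref{eq:covar.G.proj} to the inner integral, and conclude by orthonormality of the $\shY$. Your additional care in justifying the Fubini step via Cauchy--Schwarz and the finiteness of $\covarRF(1)$ is a detail the paper leaves implicit, but it does not change the argument.
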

For completeness we give a proof.
\begin{proof} By \eqref{eq:Fcoe.RF} and \eqref{eq:covar.G.proj} and the $\mathbb{L}_{2}$-orthogonality of the $\shY$,
\begin{align*}
  \expect{\RFcFcoe\RFcFcoe[\ell' m']}
  & = \expect{\int_{\sph{d}}\RFc(\PT{x})Y_{\ell,m}(\PT{x}) \IntDiffs{\PT{x}}\int_{\sph{d}}\RFc(\PT{x}')Y_{\ell',m'}(\PT{x}')\IntDiffs{\PT{x}'}}\\
  & = \int_{\sph{d}}\left(\int_{\sph{d}}\expect{\RFc(\PT{x})\RFc(\PT{x}')}Y_{\ell,m}(\PT{x})\IntDiffs{\PT{x}}\right)Y_{\ell',m'}(\PT{x}')\IntDiffs{\PT{x}'}\\
  & = \int_{\sph{d}}\APS{\ell} Y_{\ell,m}(\PT{x}')Y_{\ell',m'}(\PT{x}')\IntDiffs{\PT{x}'}\\
  & = \APS{\ell} \delta_{\ell m}\delta_{\ell' m'},
\end{align*}
where the second equality uses the Fubini theorem, thus proving \eqref{eq:orth.Fcoe.RF}.
\end{proof}

\begin{remark}
Lemma~\ref{lm:orth.Fcoe.RF} implies that the $\APS{\ell}$ are non-negative.
\end{remark}

The angular power spectrum contains the full information of the covariance of a random field. It plays an important role in depicting cosmic microwave background (CMB) and environmental data in astrophysics and geoscience, see e.g. \cite{CaMa2009,LaDu_etal2014,MaPe2011,HuZhRo2011,JeMaWaYa2014}.

Let $\ellp{2}$ be the collection of all real sequences $(x_{j}:j=1,2,\dots)$ satisfying
$\sum_{j=1}^{\infty}x_{j}^{2}<\infty$.
The following theorem, proved by Lang and Schwab \cite[Section~3]{LaSc2015}, provides a necessary and sufficient condition in terms of $\APS{\ell}$ for $\Diff{s}\covarRF(t)$ to be in $\Lpw[{\Jw[s+\alpha,s+\alpha]}]{2}$ with $s\ge0$ and $\alpha:=(d-2)/2$.
\begin{theorem}[\cite{LaSc2015}]\label{thm:APS.G.in.L2}
  Let $d\ge2$, $s\ge0$ and $\alpha:=(d-2)/2$. Let $\RF$ be a $2$-weakly isotropic random field on $\sph{d}$ and the covariance function $\covarRF(\cdot)$ be given by \eqref{eq:covariance.G}. Then $\Diff{s}\covarRF(t)$ is in $\Lpw[{\Jw[s+\alpha,s+\alpha]}]{2}$
 if and only if the sequence $(\ell^{s+\frac{d-1}{2}}\APS{\ell},\ell\ge 1)$ is in $\ellp{2}$, i.e.
   $\sum_{\ell=1}^{\infty} \bigl|\APS{\ell}\bigr|^{2}\ell^{2s+d-1} < \infty$.
\end{theorem}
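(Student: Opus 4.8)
The plan is to expand $\Diff{s}\covarRF$ in Jacobi polynomials, compute its weighted $\Lpw[{\Jw[s+\alpha,s+\alpha]}]{2}$-norm by orthogonality, and then read off the equivalence from the asymptotic size of the resulting coefficients. Throughout I write $\alpha=(d-2)/2$, so that $\NGegen{\ell}=\Jcb[\alpha,\alpha]{\ell}/\Jcb[\alpha,\alpha]{\ell}(1)$ and, by \eqref{eq:covariance.G.expan}, the covariance has the Gegenbauer expansion $\covarRF\sim\sum_{\ell\ge0}\APS{\ell}\,Z(d,\ell)\,\NGegen{\ell}$.

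First I would differentiate term by term. The classical identity for Jacobi polynomials,
\begin{equation*}
  \Diff{s}\Jcb[\alpha,\alpha]{\ell}(t)
  = \frac{\Gamma(\ell+2\alpha+s+1)}{2^{s}\,\Gamma(\ell+2\alpha+1)}\,\Jcb[\alpha+s,\alpha+s]{\ell-s}(t),
\end{equation*}
shows that for integer $s$ (with $\ell\ge s$) each summand of $\Diff{s}\covarRF$ is a multiple of the shifted Jacobi polynomial $\Jcb[\alpha+s,\alpha+s]{\ell-s}$, which is orthogonal with respect to \emph{exactly} the weight $\Jw[s+\alpha,s+\alpha]$. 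For non-integer $s$ this same identity is taken as the definition of the spectral (fractional) derivative $\Diff{s}$, so the computation that follows is unchanged, with Gamma functions throughout in place of factorials.

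Next I would invoke orthogonality. Writing $h_{n}:=\normb{\Jcb[\alpha+s,\alpha+s]{n}}{\Lpw[{\Jw[s+\alpha,s+\alpha]}]{2}}^{2}$ for the Jacobi norm and using $\Jcb[\alpha,\alpha]{\ell}(1)=\Gamma(\ell+\alpha+1)/(\Gamma(\ell+1)\Gamma(\alpha+1))$, term-by-term orthogonality collapses the norm to a single nonnegative series,
\begin{equation*}
  \normb{\Diff{s}\covarRF}{\Lpw[{\Jw[s+\alpha,s+\alpha]}]{2}}^{2}
  = \sum_{\ell\ge s}\APS{\ell}^{2}\,c_{\ell},
  \qquad
  c_{\ell}:=\frac{Z(d,\ell)^{2}}{\Jcb[\alpha,\alpha]{\ell}(1)^{2}}
  \left(\frac{\Gamma(\ell+2\alpha+s+1)}{2^{s}\,\Gamma(\ell+2\alpha+1)}\right)^{2} h_{\ell-s}.
\end{equation*}
The decisive quantitative step is then to show $c_{\ell}\asymp\ell^{2s+d-1}$. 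Using $Z(d,\ell)\asymp\ell^{d-1}$ from \eqref{eq:dim.sph.harmon}, $\Jcb[\alpha,\alpha]{\ell}(1)\asymp\ell^{\alpha}$, the elementary ratio $\Gamma(\ell+a)/\Gamma(\ell+b)\asymp\ell^{a-b}$, and the explicit Jacobi-norm formula (which after the substitution $n=\ell-s$, $a=b=\alpha+s$ gives $h_{\ell-s}\asymp\ell^{-1}$), the four factors contribute exponents $2(d-1)$, $-2\alpha=-(d-2)$, $2s$, and $-1$, which sum to $2s+d-1$.

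Finally, since $\APS{\ell}\ge0$ (Lemma~\ref{lm:orth.Fcoe.RF}) and $c_{\ell}\asymp\ell^{2s+d-1}$, the series $\sum_{\ell}\APS{\ell}^{2}c_{\ell}$ is finite if and only if $\sum_{\ell\ge1}\APS{\ell}^{2}\ell^{2s+d-1}<\infty$, which is precisely the claimed equivalence; finiteness of the latter series is also what legitimises the term-by-term differentiation and the interchange of summation with integration. I expect the main obstacle to be the careful bookkeeping of the Gamma-function asymptotics required to pin down the exponent $2s+d-1$ exactly (rather than up to an unwanted shift), together with justifying the term-by-term differentiation and, in the non-integer case, making the fractional derivative $\Diff{s}$ precise through the Jacobi-parameter shift so that the orthogonality argument applies verbatim.
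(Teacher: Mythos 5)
The first thing to say is that the paper offers no proof of this theorem to compare against: it is imported from Lang and Schwab \cite[Section~3]{LaSc2015} (the theorem header itself carries the citation), and the authors only restate it. Judged on its own merits, your route --- Gegenbauer expansion of $\covarRF$, the Jacobi derivative identity, orthogonality with respect to the shifted weight, and Gamma-function asymptotics --- is the natural one, and specialised to $d=2$, $\alpha=0$ it is essentially the argument of \cite{LaSc2015} written with Legendre polynomials. Your exponent bookkeeping is also correct: with $Z(d,\ell)\asymp\ell^{d-1}$, $\Jcb[\alpha,\alpha]{\ell}(1)\asymp\ell^{\alpha}$, the Gamma ratio $\asymp\ell^{s}$ and $h_{\ell-s}\asymp\ell^{-1}$, the exponents $2(d-1)$, $-(d-2)$, $2s$, $-1$ sum to $2s+d-1$, so $c_{\ell}\asymp\ell^{2s+d-1}$ as claimed. (Non-negativity of $\APS{\ell}$ is not actually needed, since only $\APS{\ell}^{2}$ enters.)

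There are, however, two genuine gaps. First, your Parseval identity $\normb{\Diff{s}\covarRF}{\Lpw[{\Jw[s+\alpha,s+\alpha]}]{2}}^{2}=\sum_{\ell\ge s}\APS{\ell}^{2}c_{\ell}$ is only self-justifying in the ``if'' direction: when $\sum_{\ell}\APS{\ell}^{2}\ell^{2s+d-1}<\infty$, the termwise-differentiated series converges in the weighted $\mathbb{L}_{2}$ space, hence distributionally on $(-1,1)$, and since distributional differentiation is continuous its limit is $\Diff{s}\covarRF$. Your closing remark that finiteness of the series ``legitimises the term-by-term differentiation'' works only in that direction; in the ``only if'' direction finiteness is the conclusion, not a hypothesis, so it cannot be invoked. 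What is needed there is the identification of the $\Jcb[\alpha+s,\alpha+s]{n}$-coefficients of $\Diff{s}\covarRF$ with the termwise ones, and this cannot be read off from orthogonality, because $\Jcb[\alpha+s,\alpha+s]{n}\,\Jw[s+\alpha,s+\alpha]$ is not compactly supported in $(-1,1)$ and so cannot be paired against a distributional limit. The standard repair is $s$-fold integration by parts combined with the Rodrigues-type lowering identity $\Diff{s}\bigl[\Jw[s+\alpha,s+\alpha]\,\Jcb[\alpha+s,\alpha+s]{\ell-s}\bigr]=c_{\ell,s}\,\Jw[\alpha,\alpha]\,\Jcb[\alpha,\alpha]{\ell}$, whose boundary terms vanish since the weight and its first $s-1$ derivatives vanish at $\pm1$ (here $\alpha\ge0>-1$); this expresses each shifted-basis coefficient of $\Diff{s}\covarRF$ directly as an explicit multiple of $\APS{\ell}$, after which Parseval gives the forward implication. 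Second, for non-integer $s$ your proposed definition does not parse: $\Jcb[\alpha+s,\alpha+s]{\ell-s}$ would have non-integer degree $\ell-s$ and is not a polynomial. You must either restrict to integer $s$ (which is all the paper ever uses --- it invokes the theorem only with $s=0$), define the fractional-order space by interpolation, or use a spectral definition with an integer degree shift; your asymptotics survive any of these choices, since only the weight $\Jw[s+\alpha,s+\alpha]$ and the $\ell^{s}$ coefficient growth matter, but the choice has to be stated as part of the theorem rather than discovered inside the proof.
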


\begin{theorem}\label{thm:APS.RFc.in.sob}
 Let $d\ge2$, $s> d/2$. Let $\RF$ be a $2$-weakly isotropic random field on $\sph{d}$ with the angular power spectrum $\APS{\ell}$ satisfying
 \begin{equation}\label{eq:APS.series.bounded.sob}
  \sum_{\ell=1}^{\infty}\;\APS{\ell} \ell^{2s+d-1} < \infty.
 \end{equation}
 Let $\RFc$ be given by \eqref{eq:RFc}. Then
   $\RFc \in \sob{2}{s}{d}\; \Pas$,
and
\begin{equation}\label{eq:E.RF.Sob.APS}
 \expect{\norm{\RFc}{\sob{2}{s}{d}}^{2}} = \sum_{\ell=0}^{\infty} \bell{2s} \shd \APS{\ell}<\infty,
\end{equation}
where $\bell{2s}$ and $\shd$ are given by \eqref{eq:b_l} and \eqref{eq:dim.sph.harmon} respectively.
\end{theorem}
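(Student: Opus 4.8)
The plan is to reduce everything to Parseval's identity for the $\Lp{2}{d}$-based Sobolev norm together with the orthogonality relation in Lemma~\ref{lm:orth.Fcoe.RF}. Since $\RF$ is $2$-weakly isotropic, Lemma~\ref{lm:convg T by needlets-1} gives $\RF\in\Lp{2}{d}$ $\Pas$, and hence $\RFc\in\Lp{2}{d}$ $\Pas$, so that the Fourier coefficients $\RFcFcoe=\InnerL{\RFc,\shY}$ are well defined and the spherical harmonic expansion of $\RFc$ converges in $\Lp{2}{d}$. For $p=2$ the definition \eqref{eq:sob.norm.sph} of the Sobolev norm reduces, by orthonormality of the $\shY$ and Parseval's identity, to
\begin{equation*}
  \norm{\RFc}{\sob{2}{s}{d}}^{2} = \sum_{\ell=0}^{\infty}\bell{2s}\sum_{m=1}^{\shd}\bigl|\RFcFcoe\bigr|^{2},
\end{equation*}
an identity holding $\Pas$.

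Next I would take expectations of both sides. Because every term $\bell{2s}\bigl|\RFcFcoe\bigr|^{2}$ is non-negative, Tonelli's theorem justifies interchanging the expectation with the double sum, giving
\begin{equation*}
  \expect{\norm{\RFc}{\sob{2}{s}{d}}^{2}} = \sum_{\ell=0}^{\infty}\bell{2s}\sum_{m=1}^{\shd}\expect{\bigl|\RFcFcoe\bigr|^{2}}.
\end{equation*}
Applying Lemma~\ref{lm:orth.Fcoe.RF} with $\ell'=\ell$ and $m'=m$ gives $\expect{\bigl|\RFcFcoe\bigr|^{2}}=\APS{\ell}$, so the inner sum over $m$ contributes a factor $\shd$ and I obtain the claimed identity $\expect{\norm{\RFc}{\sob{2}{s}{d}}^{2}}=\sum_{\ell=0}^{\infty}\bell{2s}\shd\APS{\ell}$.

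It then remains to show this series is finite. Using the asymptotics $\bell{2s}\asymp(1+\ell)^{2s}$ from \eqref{eq:b_l} and $\shd=Z(d,\ell)\asymp(1+\ell)^{d-1}$ from \eqref{eq:dim.sph.harmon}, the general term satisfies $\bell{2s}\shd\APS{\ell}\asymp(1+\ell)^{2s+d-1}\APS{\ell}$; since $\APS{\ell}\ge0$ (see the remark following Lemma~\ref{lm:orth.Fcoe.RF}), the series is bounded, up to the finite $\ell=0$ term and a multiplicative constant, by $\sum_{\ell=1}^{\infty}\APS{\ell}\ell^{2s+d-1}$, which is finite by hypothesis \eqref{eq:APS.series.bounded.sob}. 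This establishes \eqref{eq:E.RF.Sob.APS}.

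Finally, the almost-sure membership $\RFc\in\sob{2}{s}{d}$ follows at once: $\norm{\RFc}{\sob{2}{s}{d}}^{2}$ is a non-negative random variable with finite expectation, and hence is finite $\Pas$. The only points requiring care are the validity of the Parseval representation of the $p=2$ Sobolev norm (which rests on $\RFc$ lying in $\Lp{2}{d}$ almost surely) and the Tonelli interchange of expectation and summation; both are unproblematic once the non-negativity of the summands and of $\APS{\ell}$ is noted, so I do not anticipate a genuine obstacle. It is worth emphasising, however, that the linear (rather than quadratic) dependence on $\APS{\ell}$ here is exactly why the hypothesis \eqref{eq:APS.series.bounded.sob} carries no square, in contrast to the $\Lpw[{\Jw[s+\alpha,s+\alpha]}]{2}$ criterion of Theorem~\ref{thm:APS.G.in.L2}.
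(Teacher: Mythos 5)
Your argument follows essentially the same route as the paper's proof --- the orthogonality relation of Lemma~\ref{lm:orth.Fcoe.RF}, Parseval's identity for $p=2$, a Tonelli/monotone-convergence interchange of expectation and summation, and the comparison of $\bell{2s}\shd$ with $\ell^{2s+d-1}$ --- merely reordered, so that almost-sure membership is deduced at the end from finiteness of the expected coefficient sum rather than stated after an explicit appeal to the Riesz--Fischer theorem. There is, however, one concrete gap: you invoke Lemma~\ref{lm:orth.Fcoe.RF} without verifying its hypothesis that the covariance function satisfies $\covarRF(\cdot)\in \Lpw[{\Jw[\alpha,\alpha]}]{1}$, where $\alpha=(d-2)/2$; this hypothesis is also what guarantees that the coefficients $\APS{\ell}$, including the $\ell=0$ term you set aside as ``finite'', are well-defined finite numbers. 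The paper's proof begins precisely with this verification: from \eqref{eq:APS.series.bounded.sob} it deduces $\sum_{\ell=0}^{\infty}\bigl|\APS{\ell}\bigr|^{2}\ell^{d-1}<\infty$ and then applies Theorem~\ref{thm:APS.G.in.L2} with $s=0$ to conclude that $\covarRF(\cdot)\in\Lpw[{\Jw[\alpha,\alpha]}]{2}\subset\Lpw[{\Jw[\alpha,\alpha]}]{1}$, after which Lemma~\ref{lm:orth.Fcoe.RF} is legitimately available.

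The gap is fillable, either by reproducing the paper's argument just quoted, or more cheaply by noting that the Cauchy--Schwarz inequality and $2$-weak isotropy give $\bigl|\expect{\RFc(\PT{x})\RFc(\PT{y})}\bigr|\le \expect{|\RFc(\PT{e}_{d})|^{2}}<\infty$ for all $\PT{x},\PT{y}\in\sph{d}$, so that $\covarRF$ is bounded and hence integrable against the Jacobi weight. Once that is in place, the remaining steps you give (the Tonelli interchange, $\expect{|\RFcFcoe|^{2}}=\APS{\ell}$, the bound $(1+\ell)^{2s+d-1}\le 2^{2s+d-1}\,\ell^{2s+d-1}$ for $\ell\ge1$, and the passage from finite expectation to almost-sure finiteness) are all correct. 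Note only that your final step tacitly uses Riesz--Fischer, exactly as the paper does explicitly: membership of $\RFc$ in $\sob{2}{s}{d}$ requires the $\Lp{2}{d}$-convergence of the series $\sum_{\ell=0}^{\infty}\sum_{m=1}^{\shd}\bell{s}\RFcFcoe\shY$, not merely the finiteness of $\sum_{\ell,m}\bell{2s}|\RFcFcoe|^{2}$, so your Parseval identity should be read in the extended $[0,\infty]$-valued sense until that point.
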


\begin{proof} The first part of Theorem~\ref{thm:APS.RFc.in.sob} was proved in \cite[Section~4]{LaSc2015}. For completeness, we prove both parts.
By \eqref{eq:APS.series.bounded.sob},
\begin{equation*}
 \sum_{\ell=0}^{\infty} \bigl|\APS{\ell}\bigr|^{2} \ell^{d-1} \le  \left(\sum_{\ell=0}^{\infty} \APS{\ell} \ell^{\frac{d-1}{2}}\right)^{2}
 \le  \left(\sum_{\ell=0}^{\infty} \APS{\ell} \ell^{2s+d-1}\right)^{2} < \infty.
\end{equation*}
This and Theorem~\ref{thm:APS.G.in.L2} with $s=0$ show that the covariance function $\covarRF(\cdot)$ for $\RF$ is in $\Lpw[{\Jw[\alpha,\alpha]}]{2}\subset \Lpw[{\Jw[\alpha,\alpha]}]{1}$, where $\alpha:=(d-2)/2$.
Lemma~\ref{lm:orth.Fcoe.RF} then gives
 \begin{equation}\label{eq:angular-1}
  \expect{|\RFcFcoe|^{2}} = \APS{\ell},\quad 1\le m\le \shd.
 \end{equation}
This with \eqref{eq:APS.series.bounded.sob} gives
\begin{equation*}
  \expect{\sum_{\ell=0}^{\infty}\sum_{m=1}^{\shd} \bell{2s}|\RFcFcoe|^{2}}
   =  \sum_{\ell=0}^{\infty}\sum_{m=1}^{\shd} \bell{2s} \:\expect{|\RFcFcoe|^{2}}
   =  \sum_{\ell=0}^{\infty} \bell{2s} \shd \APS{\ell} \le c_{d,s} \sum_{\ell=0}^{\infty} \APS{\ell} \ell^{2s+d-1} < \infty,
\end{equation*}
where the fact that the expectation and series are exchangeable is a consequence of Lebesgue's monotone convergence theorem, and the first inequality uses \eqref{eq:dim.sph.harmon} and \eqref{eq:b_l}.
Thus
 $\sum_{\ell=0}^{\infty}\sum_{m=1}^{\shd} |\bell{s}\RFcFcoe|^{2}<\infty$ $\Pas$.
The Riesz-Fischer theorem then implies
\begin{equation*}
 \sum_{\ell=0}^{\infty}\sum_{m=1}^{\shd} \bell{s} \RFcFcoe \shY(\cdot) \in \Lp{2}{d}, \quad \Pas.
\end{equation*}
That is,
\begin{equation*}
 \RFc \in \sob{2}{s}{d}, \quad \Pas.
\end{equation*}
By Parseval's identity for orthonormal polynomials $\shY$ on $\sph{d}$,
\begin{align*}
 \expect{\norm{\RFc}{\sob{2}{s}{d}}^{2}}
 = \expect{\normB{\sum_{\ell=0}^{\infty}\sum_{m=1}^{\shd} \bell{s} \RFcFcoe \shY}{\Lp{2}{d}}^2}
  &= \expect{\sum_{\ell=0}^{\infty}\sum_{m=1}^{\shd} \bell{2s} |\RFcFcoe|^{2}}\\
  &= \sum_{\ell=0}^{\infty}\sum_{m=1}^{\shd} \bell{2s} \expect{|\RFcFcoe|^{2}}
  = \sum_{\ell=0}^{\infty} \bell{2s} \shd\APS{\ell},
\end{align*}
where the last equality uses \eqref{eq:angular-1}. This completes the proof.
\end{proof}

Under the condition of Theorem~\ref{thm:APS.RFc.in.sob}, since the spherical function $\expect{\RF(\PT{x})}=\expect{\RF(\PT{e}_{d})}$ is a constant function of $\PT{x}\in\sph{d}$ for a $2$-weakly isotropic random field $\RF$, then $\RF \in \sob{2}{s}{d}\; \Pas$, as stated in the following corollary.
\begin{corollary}\label{cor:APS.RF.in.sob}
Let $d\ge2$, $s> d/2$. Let $\RF$ be a $2$-weakly isotropic random field on $\sph{d}$ with the angular power spectrum $\APS{\ell}$ satisfying
 $\sum_{\ell=1}^{\infty}\APS{\ell} \ell^{2s+d-1} < \infty$.
Then $\RF \in \sob{2}{s}{d}\; \Pas$, i.e. $\RF$ has a continuous version $\Pas$.
\end{corollary}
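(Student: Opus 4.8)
The plan is to reduce the claim to Theorem~\ref{thm:APS.RFc.in.sob} by splitting $\RF$ into its random centered part and its deterministic mean. Write $\RF = \RFc + m$, where $\RFc$ is the centered field of \eqref{eq:RFc} and $m(\PT{x}):=\expect{\RF(\PT{x})}$. First I would note that, because $\RF$ is $2$-weakly isotropic (and hence in particular satisfies the $k=1$ invariance condition), the mean is rotationally invariant, so $m(\PT{x})=\expect{\RF(\PT{e}_{d})}$ is a \emph{constant} function of $\PT{x}\in\sph{d}$.

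Next I would check that this constant function lies in every $\sob{2}{s}{d}$. Indeed $m$ is a spherical polynomial of degree $0$, i.e. an element of $\shSp{0}$, so its only nonzero Fourier coefficient occurs at $\ell=0$; since $\lambda_{0}=0$ the corresponding Sobolev weight $(1+\lambda_{0})^{s/2}$ equals $1$, and therefore $\norm{m}{\sob{2}{s}{d}}=\norm{m}{\Lp{2}{d}}=|\expect{\RF(\PT{e}_{d})}|<\infty$ (using that the surface measure is normalised).

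The core input is Theorem~\ref{thm:APS.RFc.in.sob}: under the tail condition $\sum_{\ell\ge1}\APS{\ell}\,\ell^{2s+d-1}<\infty$ the centered field satisfies $\RFc\in\sob{2}{s}{d}$ $\Pas$. Since $\sob{2}{s}{d}$ is a linear space, adding the deterministic element $m\in\sob{2}{s}{d}$ yields $\RF=\RFc+m\in\sob{2}{s}{d}$ $\Pas$, which is the first assertion.

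Finally, for the continuous-version statement I would invoke Lemma~\ref{lm:embed.sob.C}: since $s>d/2=d/p$ with $p=2$, the embedding $\sob{2}{s}{d}\hookrightarrow\ContiSph{}$ is continuous. Hence, for $\probm$-almost every $\omega$, the sample function $\RF(\omega,\cdot)$ coincides (as an element of $\Lp{2}{d}$) with a continuous function on $\sph{d}$, so $\RF$ admits a continuous version $\Pas$. I do not expect a genuine obstacle here: everything substantive is carried by Theorem~\ref{thm:APS.RFc.in.sob} and the embedding lemma, and the only point to verify by hand is the trivial fact that the constant mean belongs to the Sobolev space, which holds precisely because $\lambda_{0}=0$.
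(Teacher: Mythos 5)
Your proposal is correct and follows essentially the same route as the paper: the paper obtains the corollary from Theorem~\ref{thm:APS.RFc.in.sob} by the very same observation that the $2$-weak isotropy forces the mean $\expect{\RF(\PT{x})}=\expect{\RF(\PT{e}_{d})}$ to be a constant (hence trivially in $\sob{2}{s}{d}$), so that $\RF=\RFc+\expect{\RF(\PT{e}_{d})}\in\sob{2}{s}{d}$ $\Pas$, with the continuous version then supplied by the embedding of Lemma~\ref{lm:embed.sob.C} since $s>d/2$. Your additional verification that the constant has finite Sobolev norm (via $\lambda_{0}=0$) is a detail the paper leaves implicit, but it is the same argument.
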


\begin{remark}
For a random field on a $\mathcal{C}^{r}$ $d$-manifold, $r>0$, $d\ge1$, Andreev and Lang \cite[Theorem~3.5, p.~6]{AnLa2014} proved a Kolmogorov-Chentsov continuation theorem under a H\"older condition for local regions.
\end{remark}

\subsection{Approximation errors by semidiscrete needlets}
In this section we show how the needlet approximation error for a smooth isotropic random field decays.
\begin{theorem}[Mean $\mathbb{L}_{p}$-error for semidiscrete needlets]\label{thm:err.mean.Lp.neapx}
Let $d\ge2$, $1\le p<\infty$, $s>0$. Let $\RF$ be a $\ceil{p}$-weakly isotropic random field on $\sph{d}$ satisfying $\RF\in \sob{p}{s}{d}$ $\Pas$. Let $\fiN$ be a needlet filter (see \eqref{subeqs:fiN}) satisfying $\fiN\in \CkR$ with $\fis\ge \floor{\frac{d+3}{2}}$.
Then, for $\neord\in \Nz$,
\begin{equation*}
 \expect{\normb{\RF-\neapx(\RF)}{\Lp{p}{d}}^{p}}
 \le c\: 2^{-p\neord s}\: \expect{\norm{\RFc}{\sob{p}{s}{d}}^{p}},
\end{equation*}
where $\RFc$ is given by \eqref{eq:RFc} and the constant $c$ depends only on $d$, $p$, $s$, the filter $\fiN$ and $\fis$.
\end{theorem}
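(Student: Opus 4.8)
The plan is to reduce the estimate to the deterministic needlet error bound of Theorem~\ref{thm:needlets.err.W} by fixing $\omega$ and then integrating over $\probSp$; the only structural input needed in order to obtain the centred field $\RFc$ on the right-hand side is that the operator $\neapx$ reproduces constants.

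First I would observe that, for a $\ceil{p}$-weakly isotropic field, the mean $\mu:=\expect{\RF(\cdot)}$ is the constant function $\expect{\RF(\PT{e}_{d})}$ on $\sph{d}$. By Theorem~\ref{thm:needlets.vs.filter.approx}, $\neapx(\mu;\PT{x})=\int_{\sph{d}}\mu\,\vdh{2^{\neord-1},\fiH}(\PT{x}\cdot\PT{y})\IntDiff{y}$; since $\fiH(0)=1$, $Z(d,0)=1$ and $\NGegen{0}\equiv1$, the degree-zero term of the filtered kernel integrates to one while every higher-degree term integrates to zero, so $\neapx(\mu)=\mu$ for each $\neord\in\Nz$. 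Linearity of $\neapx$ then gives, $\Pas$,
\begin{equation*}
 \RF-\neapx(\RF)=\RFc-\neapx(\RFc),
\end{equation*}
with $\RFc=\RF-\mu$ the centred field of \eqref{eq:RFc}.

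Next, since $\mu$ is constant it lies in $\sob{p}{s}{d}$, so the hypothesis $\RF\in\sob{p}{s}{d}$ $\Pas$ yields $\RFc(\omega)\in\sob{p}{s}{d}$ for $\probm$-almost every $\omega$. For each such $\omega$ I would apply Theorem~\ref{thm:needlets.err.W} to the deterministic function $\RFc(\omega)$ --- its filter-smoothness hypothesis $\fis\ge\floor{\frac{d+3}{2}}$ is exactly what is assumed here --- to obtain
\begin{equation*}
 \normb{\RF(\omega)-\neapx(\RF;\omega)}{\Lp{p}{d}}
 =\normb{\RFc(\omega)-\neapx(\RFc;\omega)}{\Lp{p}{d}}
 \le c\,2^{-\neord s}\,\norm{\RFc(\omega)}{\sob{p}{s}{d}}.
\end{equation*}
Raising to the $p$th power and taking the expectation over $\probSp$ then produces the stated bound with constant $c^{p}$.

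The argument is short because the hard analytic work is already packaged in Theorem~\ref{thm:needlets.err.W}; the only point requiring care is the constant-reproduction identity $\neapx(\mu)=\mu$, which is precisely what allows the sharper $\norm{\RFc}{\sob{p}{s}{d}}$ rather than $\norm{\RF}{\sob{p}{s}{d}}$ to appear on the right. I would also verify in passing that $\omega\mapsto\norm{\RFc(\omega)}{\sob{p}{s}{d}}$ is measurable, so that the final expectation is well defined; this is inherited from the measurability of the random field, and since $\neord$ is fixed and the inequality is pointwise in $\omega$, no dominated-convergence argument is needed at this stage.
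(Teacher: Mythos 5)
Your proposal is correct and follows essentially the same route as the paper's proof: both reduce to the deterministic bound of Theorem~\ref{thm:needlets.err.W} via the constant-reproduction property of $\neapx$ and linearity, which yields $\RF-\neapx(\RF)=\RFc-\neapx(\RFc)$, and then take expectations. The only difference is that you spell out the kernel-based justification that $\neapx$ reproduces constants and the measurability remark, details the paper leaves implicit.
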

\begin{remark} Theorem~\ref{thm:err.mean.Lp.neapx} with the Fubini theorem and \eqref{eq:E.RF.Sob.APS} and \eqref{eq:sob.norm.sph.LBO} implies that, for $1\le p<\infty$,
\begin{equation*}
 \normb{\RF-\neapx(\RF)}{\Lppsph{p}{d}}
 \le c\: 2^{-\neord s}\:\normb{\sLB{s/2} \:\RFc}{\Lppsph{p}{d}},
\end{equation*}
where the constant $c$ depends only on $d$, $p$, $s$, the filter $\fiN$ and $\fis$.
\end{remark}
\begin{proof} Let $f(\PT{x}):=\expect{\RF(\PT{x})}$, $\PT{x}\in\sph{d}$. By the weak-isotropy of $\RF$, $f(\PT{x})$ is a constant function. Since the needlet approximation reproduces constants exactly, the linearity of $\neapx$ then gives
\begin{equation}\label{eq:neapx.pntw.err.RF.RFc}
    \RF-\neapx(\RF) = \RFc-\neapx(\RFc).
\end{equation}
Since $\RFc\in \sob{p}{s}{d}$ $\Pas$, with Theorem~\ref{thm:needlets.err.W}, this gives
\begin{equation*}
\expect{\normb{\RF-\neapx(\RF)}{\Lp{p}{d}}^{p}}
 =\expect{\normb{\RFc-\neapx(\RFc)}{\Lp{p}{d}}^{p}}
 \le c\: 2^{-p\neord s}\: \expect{\norm{\RFc}{\sob{p}{s}{d}}^{p}},
 \end{equation*}
where the constant $c$ depends only on $d,p,s,\fiN$ and $\fis$, thus completing the proof.
\end{proof}

The following theorem shows that the semidiscrete needlet approximation error of a $2$-weakly isotropic random field is determined by the rate of decay of the tail of the angular power spectrum.
\begin{theorem}[Mean $\mathbb{L}_{2}$-error for semidiscrete needlets]\label{thm:err.mean.L2.neapx}
Let $d\ge2$, $s>0$. Let $\RF$ be a $2$-weakly isotropic random field on $\sph{d}$ with angular power spectrum $\APS{\ell}$ satisfying $\sum_{\ell=1}^{\infty} \APS{\ell} \ell^{2s+d-1} < \infty$. Let $\fiN$ be a needlet filter (see \eqref{subeqs:fiN}) satisfying $\fiN\in \CkR$ with $\fis\ge \floor{\frac{d+3}{2}}$.
Then, for $\neord\in \Nz$,
\begin{equation*}
 \expect{\normb{\RF-\neapx(\RF)}{\Lp{2}{d}}^{2}}
 \le c\: 2^{-2\neord s}\: \expect{\norm{\RFc}{\sob{2}{s}{d}}^{2}},
\end{equation*}
where $\RFc$ is given by \eqref{eq:RFc} and the constant $c$ depends only on $d$, $s$, the filter $\fiN$ and $\fis$.
\end{theorem}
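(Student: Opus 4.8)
The plan is to recognize that this statement is the $p=2$ specialization of the already-established mean $\mathbb{L}_{p}$-error bound in Theorem~\ref{thm:err.mean.Lp.neapx}, combined with the angular-power-spectrum characterization of Sobolev regularity from Theorem~\ref{thm:APS.RFc.in.sob}. The only hypothesis of Theorem~\ref{thm:err.mean.Lp.neapx} that is not immediately visible here is the requirement that $\RF\in\sob{2}{s}{d}$ $\Pas$, so the first task is to deduce this from the summability condition on the angular power spectrum.

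First I would establish that $\RFc\in\sob{2}{s}{d}$ $\Pas$. The argument in the proof of Theorem~\ref{thm:APS.RFc.in.sob} applies for any $s>0$ (the restriction $s>d/2$ stated there is needed only for the continuity conclusion via the Sobolev embedding Lemma~\ref{lm:embed.sob.C}, not for Sobolev membership itself). Concretely, the hypothesis $\sum_{\ell=1}^{\infty}\APS{\ell}\ell^{2s+d-1}<\infty$ together with the orthogonality relation $\expect{|\RFcFcoe|^{2}}=\APS{\ell}$ from Lemma~\ref{lm:orth.Fcoe.RF} yields the identity \eqref{eq:E.RF.Sob.APS}, namely $\expect{\norm{\RFc}{\sob{2}{s}{d}}^{2}}=\sum_{\ell=0}^{\infty}\bell{2s}\shd\APS{\ell}$, which is finite by \eqref{eq:b_l} and \eqref{eq:dim.sph.harmon}. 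Finiteness of this expectation forces $\norm{\RFc}{\sob{2}{s}{d}}<\infty$ $\Pas$, i.e. $\RFc\in\sob{2}{s}{d}$ $\Pas$.

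Next I would transfer this to $\RF$. Because $\RF$ is $2$-weakly isotropic, $\expect{\RF(\PT{x})}$ is a constant function of $\PT{x}$, so $\RF-\RFc$ is a constant and hence lies in every Sobolev space; consequently $\RF\in\sob{2}{s}{d}$ $\Pas$. Since $\ceil{2}=2$, the field $\RF$ is $\ceil{2}$-weakly isotropic, so all the hypotheses of Theorem~\ref{thm:err.mean.Lp.neapx} are met with $p=2$.

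Finally I would invoke Theorem~\ref{thm:err.mean.Lp.neapx} with $p=2$, which gives directly
\begin{equation*}
 \expect{\normb{\RF-\neapx(\RF)}{\Lp{2}{d}}^{2}}
 \le c\: 2^{-2\neord s}\: \expect{\norm{\RFc}{\sob{2}{s}{d}}^{2}},
\end{equation*}
with $c$ depending only on $d$, $s$, $\fiN$ and $\fis$ (the $p$-dependence collapses since $p=2$ is fixed). I expect no genuine obstacle: the entire content is assembling earlier results, and the only subtlety is noting that the Sobolev-membership half of Theorem~\ref{thm:APS.RFc.in.sob} is valid for all $s>0$ rather than only $s>d/2$.
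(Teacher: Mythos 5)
Your proof is correct and follows essentially the same route as the paper's: the paper likewise obtains $\RFc\in\sob{2}{s}{d}$ $\Pas$ and $\expect{\norm{\RFc}{\sob{2}{s}{d}}^{2}}<\infty$ from Theorem~\ref{thm:APS.RFc.in.sob}, and then applies the constant-reproduction identity \eqref{eq:neapx.pntw.err.RF.RFc} together with the pathwise bound of Theorem~\ref{thm:needlets.err.W}, which is precisely the content of Theorem~\ref{thm:err.mean.Lp.neapx} at $p=2$ that you invoke. Your explicit observation that the Sobolev-membership half of Theorem~\ref{thm:APS.RFc.in.sob} requires only $s>0$ (the hypothesis $s>d/2$ being relevant only to continuity via embedding) is a point the paper leaves implicit when it cites that theorem under the weaker assumption $s>0$.
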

\begin{proof}
Theorem~\ref{thm:APS.RFc.in.sob} with $\sum_{\ell=0}^{\infty} \APS{\ell} \ell^{2s+d-1} < \infty$ shows that $\RFc(\omega)\in \sob{2}{s}{d}$ for almost every $\omega\in\probSp$, and $\expect{\norm{\RFc}{\sob{2}{s}{d}}^{2}}  < \infty$.
This with Theorem~\ref{thm:needlets.err.W} and \eqref{eq:neapx.pntw.err.RF.RFc} gives
\begin{equation*}
\expect{\normb{\RF-\neapx(\RF)}{\Lp{2}{d}}^{2}}
=\expect{\normb{\RFc-\neapx(\RFc)}{\Lp{2}{d}}^{2}}
 \le c\: 2^{-2\neord s}\: \expect{\norm{\RFc}{\sob{2}{s}{d}}^{2}},
 \end{equation*}
where the constant $c$ depends only on $d,s,\fiN$ and $\fis$, thus completing the proof.
\end{proof}

Corollary~\ref{cor:APS.RF.in.sob} with Theorem~\ref{thm:needlets.err.W} and \eqref{eq:neapx.pntw.err.RF.RFc} implies the following pointwise approximation error bound for needlet approximations of a $2$-weakly isotropic random field on $\sph{d}$.
\begin{theorem}[Pointwise error for semidiscrete needlets]\label{thm:err.pointwise.L2.neapx} Let $d\ge2$, $s>0$. Let $\RF$ be a $2$-weakly isotropic random field on $\sph{d}$ with angular power spectrum $\APS{\ell}$ satisfying $\sum_{\ell=1}^{\infty} \APS{\ell} \ell^{2s+d-1} < \infty$. Let $\fiN$ be a needlet filter given by \eqref{subeqs:fiN} and satisfying $\fiN\in \CkR$ with $\fis\ge \floor{\frac{d+3}{2}}$.
Then, for $\neord\in\Nz$, $\Pas$,
\begin{equation*}
  \normb{\RF-\neapx(\RF)}{\Lp{2}{d}} \le  c\:{2^{-\neord s}}\:\norm{\RFc}{\sob{2}{s}{d}},
\end{equation*}
where $\RFc$ is given by \eqref{eq:RFc} and the constant $c$ depends only on $d$, $s$, $\fiN$ and $\fis$.
\end{theorem}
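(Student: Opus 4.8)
The plan is to reduce the pointwise $\Lp{2}{d}$-error to the purely deterministic needlet error bound of Theorem~\ref{thm:needlets.err.W}, applied sample-path by sample-path. First I would exploit the isotropy exactly as in \eqref{eq:neapx.pntw.err.RF.RFc}: because $\RF$ is $2$-weakly isotropic, the mean $\expect{\RF(\PT{x})}$ is a constant function of $\PT{x}\in\sph{d}$, and since $\neapx$ is linear and reproduces constants, the error is unchanged upon centering, i.e. $\RF-\neapx(\RF)=\RFc-\neapx(\RFc)$. Hence it suffices to bound $\normb{\RFc-\neapx(\RFc)}{\Lp{2}{d}}$, and this is what produces the $\norm{\RFc}{\sob{2}{s}{d}}$ (rather than $\norm{\RF}{\sob{2}{s}{d}}$) on the right-hand side of the claim.

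Next I would invoke Theorem~\ref{thm:APS.RFc.in.sob} (equivalently Corollary~\ref{cor:APS.RF.in.sob}) to transfer the summability hypothesis on the angular power spectrum into almost-sure Sobolev regularity of the sample paths. The assumption $\sum_{\ell=1}^{\infty}\APS{\ell}\,\ell^{2s+d-1}<\infty$ guarantees $\RFc(\omega)\in\sob{2}{s}{d}$ for $\Pas$ $\omega\in\probSp$, so that $\norm{\RFc(\omega)}{\sob{2}{s}{d}}<\infty$ on a set of full probability measure; I would note that the Sobolev-membership part of that argument uses only $s>0$, which is all that is assumed here. With this in hand, the final step is deterministic: for each fixed $\omega$ in that full-measure set, $\RFc(\omega)$ is an ordinary function in $\sob{2}{s}{d}$, so Theorem~\ref{thm:needlets.err.W} with $p=2$ gives
\[
  \normb{\RFc(\omega)-\neapx(\RFc;\omega)}{\Lp{2}{d}}\le c\,2^{-\neord s}\,\norm{\RFc(\omega)}{\sob{2}{s}{d}},
\]
with $c=c(d,s,\fiN,\fis)$. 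Combining this with the centering identity of the first step yields the stated bound $\Pas$.

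There is no genuinely hard step here; the substance lives entirely in the supporting results already established. The one point that requires care is that Theorem~\ref{thm:needlets.err.W} is a statement about a single fixed function, so its use must be made \emph{pointwise in $\omega$} rather than in an averaged sense; this is legitimate precisely because Theorem~\ref{thm:APS.RFc.in.sob} delivers almost-sure membership of $\RFc(\omega)$ in $\sob{2}{s}{d}$, not merely finiteness of $\expect{\norm{\RFc}{\sob{2}{s}{d}}^{2}}$. Once that almost-sure membership is secured, the estimate follows immediately, and the constant inherits its dependence on $d$, $s$, $\fiN$, and $\fis$ directly from Theorem~\ref{thm:needlets.err.W}.
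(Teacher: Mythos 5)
Your proof is correct and follows exactly the paper's route: the paper derives this theorem in one line from Corollary~\ref{cor:APS.RF.in.sob} (almost-sure Sobolev membership of the sample paths), the centering identity \eqref{eq:neapx.pntw.err.RF.RFc}, and the deterministic bound of Theorem~\ref{thm:needlets.err.W} with $p=2$ applied pathwise on the full-measure set. Your observation that the Sobolev-membership part of Theorem~\ref{thm:APS.RFc.in.sob} uses only $s>0$ is a careful touch, since the paper cites Corollary~\ref{cor:APS.RF.in.sob}, which is formally stated under the stronger hypothesis $s>d/2$, while the theorem being proved assumes only $s>0$.
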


\subsection{Fully discrete needlet approximation}
For implementation, we follow \cite{WaLeSlWo2016} in using a quadrature rule (which will usually be different from the needlet quadrature) to discretise the needlet coefficients of the semidiscrete needlet approximation in \eqref{eq:neapx.RF}.
Let $\needlet{jk}$ be needlets satisfying \eqref{subeqs:needlets}, and let
\begin{equation}\label{eq:QH}
    \QH := \QH[](N,\ell) := \{(\wH,\pH{i}):i=1,\dots,N\}
\end{equation}
be a \emph{discretisation quadrature rule} that is exact for polynomials of degree up to some $\ell$,
yet to be fixed. Applying the quadrature rule $\QH$ to the needlet
coefficient
  $\InnerL{\RF,\needlet{jk}} = \int_{\sph{d}} \RF(\PT{y})\needlet{jk}(\PT{y}) \IntDiff{y}$,
we obtain the discrete needlet coefficient
\begin{equation*}
    \InnerD{\RF,\needlet{jk}} := \sum_{i=1}^{N} \wH\:\RF(\pH{i})\:\needlet{jk}(\pH{i}).
\end{equation*}
This turns the semidiscrete needlet approximation \eqref{eq:neapx.RF} into the \emph{(fully) discrete needlet approximation}
\begin{equation}\label{eq:disneapx.RF}
    \disneapx(\RF) = \sum_{j=0}^{\neord}\sum_{k=1}^{N_{j}}\InnerD{\RF,\needlet{jk}} \needlet{jk}.
\end{equation}

In \cite[Theorem~4.3]{WaLeSlWo2016}, we gave the following convergence result for the discrete needlet approximation of $f\in\sob{p}{s}{d}$.
\begin{theorem}[\cite{WaLeSlWo2016}]\label{thm:dis.needlets.err.Wp} Let $d\geq2$, $1\leq p\leq \infty$, $s>d/p$, $\neord\in\Nz$. Let $\disneapx$ be the fully discrete needlet approximation with needlet filter $\fiN\in \CkR$ and $\fis\ge \floor{\frac{d+3}{2}}$ and with discretisation quadrature $\QH$ exact for degree $3\cdot 2^{\neord-1}-1$. Then, for $f\in \sob{p}{s}{d}$,
    \begin{equation*}
    \normb{f-\disneapx(f)}{\Lp{p}{d}} \leq c\: 2^{-\neord s}\: \norm{f}{\sob{p}{s}{d}},
    \end{equation*}
    where the constant $c$ depends only on $d$, $p$, $s$, $\fiN$ and $\fis$.
\end{theorem}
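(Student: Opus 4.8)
The plan is to compare the fully discrete approximation with the semidiscrete one and absorb the latter's error into the already-known bound. By the triangle inequality,
\[
\normb{f-\disneapx(f)}{\Lp{p}{d}} \le \normb{f-\neapx(f)}{\Lp{p}{d}} + \normb{\neapx(f)-\disneapx(f)}{\Lp{p}{d}},
\]
and the first term is controlled immediately by Theorem~\ref{thm:needlets.err.W}, which yields $\normb{f-\neapx(f)}{\Lp{p}{d}}\le c\,2^{-\neord s}\norm{f}{\sob{p}{s}{d}}$. Since $s>d/p$, Lemma~\ref{lm:embed.sob.C} ensures that $f$ has a continuous representative, so the point evaluations $f(\pH{i})$ defining $\disneapx(f)$ are meaningful. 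All the remaining work is to show that the pure discretisation error $\neapx(f)-\disneapx(f)$ obeys the same bound.

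First I would recast this difference as a quadrature error. By Theorem~\ref{thm:needlets.vs.filter.approx}, $\neapx(f)=\Vdh{2^{\neord-1},\fiH}(f)$, while summing \eqref{eq:needlets.vs.filter.sph.ker.fiN} over $j$ (Theorem~\ref{thm:needlets.vs.filter.sph.ker}) gives $\disneapx(f;\PT{x})=\sum_{i=1}^{N}\wH\,f(\pH{i})\,\vdh{2^{\neord-1},\fiH}(\pH{i}\cdot\PT{x})$, i.e. the $\QH$-discretisation of the same filtered integral. The key is polynomial exactness. Let $Q^{*}\in\sphpo{\floor{2^{\neord-1}}}$ be a near-best $\Lp{p}$ polynomial approximation to $f$. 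For every $j\le\neord$ and every $k$ the needlet $\needlet{jk}$ has degree $2^{j}-1\le 2^{\neord}-1$, so $Q^{*}\needlet{jk}$ has degree at most $3\cdot2^{\neord-1}-1$; since $\QH$ is exact to this degree, $\InnerD{Q^{*},\needlet{jk}}=\InnerL{Q^{*},\needlet{jk}}$ for all $j,k$, and hence $\neapx(Q^{*})=\disneapx(Q^{*})$. By linearity the discretisation error collapses to
\[
\neapx(f)-\disneapx(f)=\neapx(f-Q^{*})-\disneapx(f-Q^{*}).
\]
Writing $g:=f-Q^{*}$, Lemma~\ref{lm:best.approx.sph.sob} gives $\norm{g}{\Lp{p}{d}}\le c\,E_{\floor{2^{\neord-1}}}(f)_{\Lp{p}{d}}\le c\,2^{-\neord s}\norm{f}{\sob{p}{s}{d}}$, so it suffices to bound both $\normb{\neapx(g)}{\Lp{p}{d}}$ and $\normb{\disneapx(g)}{\Lp{p}{d}}$ by $c\,\norm{g}{\Lp{p}{d}}$. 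The semidiscrete part is immediate from the strong $(p,p)$ boundedness of $\Vdh{2^{\neord-1},\fiH}$ in Theorem~\ref{thm:filter.sph.operator.UB}.

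The hard part is the discrete operator bound $\normb{\disneapx(g)}{\Lp{p}{d}}\le c\,\norm{g}{\Lp{p}{d}}$, because $g$ is not a polynomial and yet $\disneapx$ only sees its nodal values $g(\pH{i})$. I would split this into two ingredients. The first is a discrete Young/Schur-type estimate $\normb{\disneapx(g)}{\Lp{p}{d}}\le c\,(\sum_{i=1}^{N}\wH|g(\pH{i})|^{p})^{1/p}$, which I expect to follow from the strong localisation of the kernel $\vdh{2^{\neord-1},\fiH}$ in Theorem~\ref{thm:filter.sph.ker.UB} (valid since $\fis\ge\floor{(d+3)/2}$), together with the quasi-uniform distribution of the nodes that is forced by positive weights and high exactness. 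The second, and genuinely delicate, ingredient is a Marcinkiewicz--Zygmund inequality bounding the discrete $\ell^{p}$ norm of the nodal values by $c\,\norm{g}{\Lp{p}{d}}$; the difficulty is that such inequalities are intrinsically polynomial statements, whereas $g$ is not a polynomial. This is where the continuity supplied by $s>d/p$ (Lemma~\ref{lm:embed.sob.C}) and a further near-best polynomial decomposition of $g$ at the scale of the quadrature are needed to make the nodal values controllable, at the cost of a remainder already of order $2^{-\neord s}\norm{f}{\sob{p}{s}{d}}$. Granting these, $\normb{\disneapx(g)}{\Lp{p}{d}}\le c\,2^{-\neord s}\norm{f}{\sob{p}{s}{d}}$, and combining with the semidiscrete bound for $f-\neapx(f)$ completes the proof.
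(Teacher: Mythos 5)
Your skeleton is sound as far as it goes, and it matches the route of the actual proof (note that this paper does not reprove the theorem --- it quotes \cite[Theorem~4.3]{WaLeSlWo2016} --- but the proof there runs through exactly the filtered-hyperinterpolation representation you use). Specifically: the triangle inequality against $\neapx(f)$; the identity $\disneapx(f;\PT{x})=\sum_{i=1}^{N}\wH\, f(\pH{i})\,\vdh{2^{\neord-1},\fiH}(\pH{i}\cdot\PT{x})$; the observation that exactness of $\QH$ for degree $3\cdot2^{\neord-1}-1$ forces $\disneapx(Q^{*})=\neapx(Q^{*})$ for $Q^{*}\in\sphpo{\floor{2^{\neord-1}}}$ (the degree count $2^{\neord-1}+2^{\neord}-1=3\cdot2^{\neord-1}-1$ is right); the reduction to $g=f-Q^{*}$; and the bound $\normb{\neapx(g)}{\Lp{p}{d}}\le c\norm{g}{\Lp{p}{d}}\le c\,2^{-\neord s}\norm{f}{\sob{p}{s}{d}}$ via Theorem~\ref{thm:filter.sph.operator.UB} and Lemma~\ref{lm:best.approx.sph.sob} are all correct.

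The gap is the step you yourself flag and then ``grant''. Your reduction ``it suffices to bound $\normb{\disneapx(g)}{\Lp{p}{d}}$ by $c\norm{g}{\Lp{p}{d}}$'' is not merely unproved; it is false for $1\le p<\infty$: $\disneapx(g)$ sees only the nodal values $g(\pH{i})$, and no bound by $\norm{g}{\Lp{p}{d}}$ can hold. For instance, take $g$ continuous, equal to $1$ at every node $\pH{i}$ but supported on tiny caps around the nodes; then $\disneapx(g)=\disneapx(1)=1$ while $\norm{g}{\Lp{p}{d}}$ is arbitrarily small. (For $p=\infty$ the claim is true and easy, which is why that case is not where the difficulty lies.) The same example kills the proposed Marcinkiewicz--Zygmund-type estimate $\bigl(\sum_{i=1}^{N}\wH|g(\pH{i})|^{p}\bigr)^{1/p}\le c\norm{g}{\Lp{p}{d}}$ for non-polynomial $g$. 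The correct and essential ingredient --- and the only place where $s>d/p$ is used quantitatively rather than just to make point evaluation well defined --- is to telescope $g=\sum_{j\ge\neord}(Q_{j+1}-Q_{j})$ into near-best polynomial blocks of dyadic degrees, apply the positive-weight Marcinkiewicz--Zygmund inequality to each block with the degree-overflow factor of order $2^{(j-\neord)d/p}$ (for a block of degree $\asymp 2^{j}$ measured against a rule exact only to degree $\asymp2^{\neord}$), and then use Lemma~\ref{lm:best.approx.sph.sob} on each block to obtain the series $\sum_{j\ge\neord}2^{(j-\neord)d/p}\,2^{-js}\norm{f}{\sob{p}{s}{d}}$, which sums to $c\,2^{-\neord s}\norm{f}{\sob{p}{s}{d}}$ precisely because $s>d/p$ makes the geometric series converge. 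Without carrying out this (or an equivalent) argument, the decisive half of the theorem is assumed rather than proved, so the proposal is incomplete.
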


Theorem~\ref{thm:dis.needlets.err.Wp} implies the following error bound for the discrete needlet approximation of a smooth $\ceil{p}$-weakly isotropic random field, cf. Theorem~\ref{thm:err.mean.Lp.neapx}.
\begin{theorem}[Mean $\mathbb{L}_{p}$-error for discrete needlets]\label{thm:err.mean.Lp.disneapx}
Let $d\ge2$, $1\le p<\infty$, $s>d/p$, $\neord\in \Nz$. Let $\RF$ be a $\ceil{p}$-weakly isotropic random field on $\sph{d}$ satisfying $\RF\in \sob{p}{s}{d}$ $\Pas$. Let $\QH$ be a discretisation quadrature exact for degree $3\cdot 2^{\neord-1}-1$ and let $\fiN$ be a needlet filter given by \eqref{subeqs:fiN} and satisfying $\fiN\in \CkR$ with $\fis\ge \floor{\frac{d+3}{2}}$.
Then
\begin{equation*}
 \expect{\normb{\RF-\disneapx(\RF)}{\Lp{p}{d}}^{p}}
 \le c\: 2^{-p\neord s}\: \expect{\norm{\RFc}{\sob{p}{s}{d}}^{p}},
\end{equation*}
where $\RFc$ is given by \eqref{eq:RFc} and the constant $c$ depends only on $d$, $p$, $s$, the filter $\fiN$ and $\fis$.
\end{theorem}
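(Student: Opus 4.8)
The plan is to follow the proof of Theorem~\ref{thm:err.mean.Lp.neapx} almost verbatim, replacing the semidiscrete error bound (Theorem~\ref{thm:needlets.err.W}) by its fully discrete counterpart (Theorem~\ref{thm:dis.needlets.err.Wp}), arguing pathwise in $\omega$ and only afterwards taking expectations. First I would reduce to the centered field. Setting $f(\PT{x}):=\expect{\RF(\PT{x})}$, the weak isotropy of $\RF$ (of order $\ceil{p}\ge1$) forces $f$ to be constant on $\sph{d}$. Since $\disneapx$ is linear, it remains only to check that it reproduces this constant. Because $\QH$ is exact for polynomials of degree $3\cdot2^{\neord-1}-1\ge 2^{\neord}-1$, while each needlet $\needlet{jk}$ with $0\le j\le\neord$ has degree $2^{j}-1\le 2^{\neord}-1$, we have $\InnerD{1,\needlet{jk}}=\InnerL{1,\needlet{jk}}$ for every $j,k$, so that $\disneapx(1)=\neapx(1)$. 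The semidiscrete approximation reproduces constants (by Theorem~\ref{thm:needlets.vs.filter.approx} it equals the filtered approximation $\Vdh{2^{\neord-1},\fiH}$, and $\fiH(0)=1$ leaves the degree-$0$ component untouched), hence $\disneapx(1)=1$ and $\disneapx(f)=f$. By linearity this yields the discrete analogue of \eqref{eq:neapx.pntw.err.RF.RFc},
\[
  \RF-\disneapx(\RF)=\RFc-\disneapx(\RFc).
\]

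Next I would apply the deterministic estimate pathwise. Since $f$ is constant and $\RF\in\sob{p}{s}{d}$ $\Pas$, also $\RFc\in\sob{p}{s}{d}$ $\Pas$. The hypothesis $s>d/p$ is exactly what guarantees that the point evaluations $\RFc(\pH{i})$ entering $\InnerD{\RFc,\needlet{jk}}$ are meaningful: by Lemma~\ref{lm:embed.sob.C}, $\sob{p}{s}{d}\hookrightarrow\ContiSph{}$, so $\RFc(\omega)$ has a continuous representative for almost every $\omega$ and $\disneapx(\RFc;\omega)$ is well defined. Fixing such an $\omega$ and invoking Theorem~\ref{thm:dis.needlets.err.Wp} on $\RFc(\omega)\in\sob{p}{s}{d}$ gives, $\Pas$,
\[
  \normb{\RFc-\disneapx(\RFc)}{\Lp{p}{d}}\le c\,2^{-\neord s}\,\norm{\RFc}{\sob{p}{s}{d}},
\]
with $c=c(d,p,s,\fiN,\fis)$. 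Raising to the $p$th power, using the identity of the first step, and taking expectations then yields
\begin{align*}
  \expect{\normb{\RF-\disneapx(\RF)}{\Lp{p}{d}}^{p}}
  &=\expect{\normb{\RFc-\disneapx(\RFc)}{\Lp{p}{d}}^{p}}\\
  &\le c^{p}\,2^{-p\neord s}\,\expect{\norm{\RFc}{\sob{p}{s}{d}}^{p}},
\end{align*}
which is the asserted bound; its finiteness, when needed, follows from Theorem~\ref{thm:APS.RFc.in.sob}.

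The main obstacle is precisely the well-definedness issue just flagged. Unlike the semidiscrete operator, which only integrates $\RF$ and hence needs merely $\RF\in\Lp{p}{d}$ (so $s>0$ suffices), the fully discrete operator samples $\RF$ at the nodes $\pH{i}$; such point evaluations are meaningful only after passing to a continuous version, which forces the stronger regularity $s>d/p$ and the use of the embedding Lemma~\ref{lm:embed.sob.C}. One must also verify that $\omega\mapsto\normb{\RFc-\disneapx(\RFc)}{\Lp{p}{d}}$ is measurable, so that the final expectation is legitimate; this follows from the joint $\mathcal{F}\otimes\mathscr{B}(\sph{d})$-measurability of $\RF$ together with the continuity of the finite-dimensional sampling map $\omega\mapsto(\RFc(\omega,\pH{1}),\dots,\RFc(\omega,\pH{N}))$, which makes $\disneapx(\RFc)$ a measurable random field.
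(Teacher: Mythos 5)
Your proof is correct and follows essentially the same route as the paper's: reduce to the centered field via constant reproduction (using $\disneapx(f)=\neapx(f)$ for the constant mean $f$, which holds because the quadrature $\QH$ is exact for degree $3\cdot 2^{\neord-1}-1\ge 2^{\neord}-1$) and linearity, then apply Theorem~\ref{thm:dis.needlets.err.Wp} pathwise and take expectations. The additional details you supply --- the exactness argument giving $\InnerD{1,\needlet{jk}}=\InnerL{1,\needlet{jk}}$, the role of $s>d/p$ via Lemma~\ref{lm:embed.sob.C} in making the point evaluations meaningful, and the measurability of $\omega\mapsto\normb{\RFc-\disneapx(\RFc)}{\Lp{p}{d}}$ --- are points the paper leaves implicit, and you handle them correctly.
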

\begin{remark} Theorem~\ref{thm:err.mean.Lp.disneapx} with the Fubini theorem and \eqref{eq:E.RF.Sob.APS} and \eqref{eq:sob.norm.sph.LBO} implies that
\begin{equation*}
 \normb{\RF-\disneapx(\RF)}{\Lppsph{p}{d}}
 \le c\: 2^{-\neord s}\:\normb{\sLB{s/2} \:\RFc}{\Lppsph{p}{d}},
\end{equation*}
where the constant $c$ depends only on $d$, $p$, $s$, the filter $\fiN$ and $\fis$.
\end{remark}
\begin{proof} Let $f(\PT{x}):=\expect{\RF(\PT{x})}$, $\PT{x}\in\sph{d}$. By the weak-isotropy of $\RF$, $f(\PT{x})$ is a constant function. Since $\disneapx(f)=\neapx(f)$, the discrete needlet approximation reproduces constants exactly. The linearity of $\disneapx$ then gives
\begin{equation}\label{eq:disneapx.pntw.err.RF.RFc}
    \RF-\disneapx(\RF) = \RFc-\disneapx(\RFc).
\end{equation}
Since $\RFc\in \sob{p}{s}{d}$ $\Pas$, Theorem~\ref{thm:dis.needlets.err.Wp} then gives
\begin{equation*}
\expect{\normb{\RF-\disneapx(\RF)}{\Lp{p}{d}}^{p}}
=\expect{\normb{\RFc-\disneapx(\RFc)}{\Lp{p}{d}}^{p}}
 \le c\: 2^{-p\neord s}\: \expect{\norm{\RFc}{\sob{p}{s}{d}}^{p}},
 \end{equation*}
where the constant $c$ depends only on $d,p,s,\fiN$ and $\fis$, thus completing the proof.
\end{proof}

Theorem~\ref{thm:err.mean.Lp.disneapx} implies the following error bound for the discrete needlet approximation of a smooth $2$-weakly isotropic random field, where the condition is stated in terms of angular power spectrum.
\begin{theorem}[Mean $\mathbb{L}_{2}$-error for discrete needlets]\label{thm:err.mean.L2.disneapx} Let $d\ge2$, $s>d/2$, $\neord\in\Nz$. Let $\RF$ be a $2$-weakly isotropic random field on $\sph{d}$ with angular power spectrum $\APS{\ell}$ satisfying $\sum_{\ell=1}^{\infty}\APS{\ell}\ell^{2s+d-1}<\infty$. Let $\QH$ be a discretisation quadrature exact for degree $3\cdot 2^{\neord-1}-1$ and let $\fiN$ be a needlet filter given by \eqref{subeqs:fiN} and satisfying $\fiN\in \CkR$ with $\fis\ge \floor{\frac{d+3}{2}}$. Then,
\begin{equation*}
 \expect{\normb{\RF-\disneapx(\RF)}{\Lp{2}{d}}^{2}}
 \le c\: 2^{-2\neord s}\: \expect{\norm{\RFc}{\sob{2}{s}{d}}^{2}},
\end{equation*}
where $\RFc$ is given by \eqref{eq:RFc} and the constant $c$ depends only on $d$, $s$, filter $\fiN$ and $\fis$.
\end{theorem}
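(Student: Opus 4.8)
The plan is to obtain this result as the $p=2$ specialisation of Theorem~\ref{thm:err.mean.Lp.disneapx}, exactly paralleling the way Theorem~\ref{thm:err.mean.L2.neapx} was deduced from Theorem~\ref{thm:err.mean.Lp.neapx} in the semidiscrete setting. Since Theorem~\ref{thm:err.mean.Lp.disneapx} already absorbs the reduction from $\RF$ to $\RFc$ (via the constant-reproduction property $\disneapx(f)=\neapx(f)$ for the constant mean and the resulting identity \eqref{eq:disneapx.pntw.err.RF.RFc}), the only genuine task is to check that the hypotheses of that theorem are met when $p=2$.

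First I would verify the isotropy condition: because $\ceil{2}=2$, a $2$-weakly isotropic field $\RF$ is precisely a $\ceil{p}$-weakly isotropic field for $p=2$. Next I would supply the required Sobolev regularity $\RF\in\sob{2}{s}{d}$ $\Pas$. Since $s>d/2$ and the angular power spectrum satisfies $\sum_{\ell=1}^{\infty}\APS{\ell}\ell^{2s+d-1}<\infty$, Theorem~\ref{thm:APS.RFc.in.sob} gives $\RFc\in\sob{2}{s}{d}$ $\Pas$ together with the finiteness $\expect{\norm{\RFc}{\sob{2}{s}{d}}^{2}}<\infty$; and because $\expect{\RF(\PT{x})}$ is a constant function (by $2$-weak isotropy) lying in $\sob{2}{s}{d}$, Corollary~\ref{cor:APS.RF.in.sob} yields $\RF\in\sob{2}{s}{d}$ $\Pas$.

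With these hypotheses in hand, and with $p=2$ so that $s>d/2=d/p$, the exactness of $\QH$ for polynomials of degree $3\cdot 2^{\neord-1}-1$ and the filter smoothness $\fis\ge\floor{\frac{d+3}{2}}$ permit a direct application of Theorem~\ref{thm:err.mean.Lp.disneapx}, giving
\begin{equation*}
 \expect{\normb{\RF-\disneapx(\RF)}{\Lp{2}{d}}^{2}}
 \le c\: 2^{-2\neord s}\: \expect{\norm{\RFc}{\sob{2}{s}{d}}^{2}},
\end{equation*}
with $c$ depending only on $d$, $s$, $\fiN$ and $\fis$. No step presents a real obstacle here: the substantive analytic content has already been packaged into Theorem~\ref{thm:err.mean.Lp.disneapx} (whose proof in turn rests on the discrete needlet error bound of Theorem~\ref{thm:dis.needlets.err.Wp}) and into the almost-sure Sobolev membership furnished by Theorem~\ref{thm:APS.RFc.in.sob}. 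The result is therefore essentially a corollary, and the argument amounts to the hypothesis verification just described.
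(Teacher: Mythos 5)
Your proposal is correct and follows essentially the same route as the paper: the paper also deduces this theorem by verifying that Theorem~\ref{thm:APS.RFc.in.sob} supplies the almost-sure membership $\RFc\in\sob{2}{s}{d}$ with $\expect{\norm{\RFc}{\sob{2}{s}{d}}^{2}}<\infty$, and then applying Theorem~\ref{thm:err.mean.Lp.disneapx} with $p=2$ together with the identity \eqref{eq:disneapx.pntw.err.RF.RFc}. Your extra appeal to Corollary~\ref{cor:APS.RF.in.sob} to confirm $\RF\in\sob{2}{s}{d}$ $\Pas$ is a slightly more explicit verification of the hypothesis of Theorem~\ref{thm:err.mean.Lp.disneapx}, but the substance is identical.
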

\begin{proof} Theorem~\ref{thm:APS.RFc.in.sob} with $\sum_{\ell=0}^{\infty} \APS{\ell} \ell^{2s+d-1} < \infty$ shows that $\RFc\in \sob{2}{s}{d}$ $\Pas$, and that $\expect{\norm{\RFc}{\sob{2}{s}{d}}^{2}}  < \infty$.
This with Theorem~\ref{thm:err.mean.Lp.disneapx} and \eqref{eq:disneapx.pntw.err.RF.RFc} gives
\begin{equation*}
\expect{\normb{\RF-\disneapx(\RF)}{\Lp{2}{d}}^{2}}
=\expect{\normb{\RFc-\disneapx(\RFc)}{\Lp{2}{d}}^{2}}
 \le c\: 2^{-2\neord s}\: \expect{\norm{\RFc}{\sob{2}{s}{d}}^{2}},
 \end{equation*}
where the constant $c$ depends only on $d,s,\fiN$ and $\fis$, thus completing the proof.
\end{proof}

\begin{remark} When $\RF$ is $2$-weakly isotropic and the angular power spectrum satisfies $\sum_{\ell=1}^{\infty}\APS{\ell} \ell^{2s+d-1}<\infty$ (i.e. $\RF\in \sob{2}{s}{d}$ $\Pas$ by Corollary~\ref{cor:APS.RF.in.sob}), the Sobolev embedding theorem (see Lemma~\ref{lm:embed.sob.sob}) shows that $\Pas$, $\RF\in \sob{p}{\alpha}{d}$, $1\le p<\infty$, $\alpha\le s-d/2+d/p$. The remark following Theorem~\ref{thm:err.mean.Lp.disneapx} then implies
\begin{equation*}
 \normb{\RF-\disneapx(\RF)}{\Lppsph{p}{d}}
 \le c\: 2^{-\neord \alpha}\:\normb{\sLB{\alpha/2} \RFc}{\Lppsph{p}{d}},
\end{equation*}
where the constant $c$ depends only on $d$, $p$, $s$, $\alpha$, $\fiN$ and $\fis$.
\end{remark}

Theorem~\ref{thm:dis.needlets.err.Wp} with Corollary~\ref{cor:APS.RF.in.sob} and \eqref{eq:disneapx.pntw.err.RF.RFc} gives the pointwise approximation error for discrete needlet approximations, cf. Theorem~\ref{thm:err.pointwise.L2.neapx}.
\begin{theorem}[Pointwise error for discrete needlets]\label{thm:err.pointwise.L2.disneapx} Let $d\ge2$, $s>d/2$, $\neord\in\Nz$. Let $\RF$ be a $2$-weakly isotropic random field on $\sph{d}$ with angular power spectrum $\APS{\ell}$ satisfying $\sum_{\ell=1}^{\infty}\APS{\ell}\ell^{2s+d-1}<\infty$. Let $\QH$ be exact for degree $3\cdot 2^{\neord-1}-1$ given $\neord\in\Nz$ and let $\fiN$ be a needlet filter in \eqref{subeqs:fiN} and satisfying $\fiN\in \CkR$ with $\fis\ge \floor{\frac{d+3}{2}}$. Then, $\Pas$,
\begin{equation*}
  \normb{\RF-\disneapx(\RF)}{\Lp{2}{d}}
 \le c\:{2^{-\neord s}}\:\norm{\RFc}{\sob{2}{s}{d}},
\end{equation*}
where $\RFc$ is given by \eqref{eq:RFc} and the constant $c$ depends only on $d$, $s$, $\fiN$ and $\fis$.
\end{theorem}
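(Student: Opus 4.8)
The plan is to reduce the statement to the deterministic fully discrete error bound of Theorem~\ref{thm:dis.needlets.err.Wp}, applied pathwise, in exact parallel with the semidiscrete pointwise estimate of Theorem~\ref{thm:err.pointwise.L2.neapx}. Three ingredients are needed: almost sure Sobolev regularity of the sample paths, the constant-reproduction identity \eqref{eq:disneapx.pntw.err.RF.RFc}, and the deterministic needlet bound specialised to $p=2$.

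First I would invoke Corollary~\ref{cor:APS.RF.in.sob}: since $s>d/2$ and $\sum_{\ell=1}^{\infty}\APS{\ell}\ell^{2s+d-1}<\infty$, the field satisfies $\RF\in\sob{2}{s}{d}$ $\Pas$, and by Theorem~\ref{thm:APS.RFc.in.sob} the centred field $\RFc$ given by \eqref{eq:RFc} likewise lies in $\sob{2}{s}{d}$ $\Pas$ with $\norm{\RFc}{\sob{2}{s}{d}}<\infty$ $\Pas$. In particular the right-hand side of the asserted inequality is finite almost surely.

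Next I would use the constant-reproduction identity. Because $\RF$ is $2$-weakly isotropic, the mean $\expect{\RF(\PT{x})}$ is independent of $\PT{x}$, hence a constant function; as $\disneapx(f)=\neapx(f)$ on constants and $\neapx$ reproduces constants exactly, the linearity of $\disneapx$ yields \eqref{eq:disneapx.pntw.err.RF.RFc}, namely $\RF-\disneapx(\RF)=\RFc-\disneapx(\RFc)$. Consequently $\normb{\RF-\disneapx(\RF)}{\Lp{2}{d}}=\normb{\RFc-\disneapx(\RFc)}{\Lp{2}{d}}$ for each fixed realisation. Finally, for $\Pas$ $\omega$ the sample path $\RFc(\omega)$ is a deterministic function in $\sob{2}{s}{d}$; since $s>d/2=d/p$ with $p=2$ and the discretisation quadrature $\QH$ is exact for degree $3\cdot2^{\neord-1}-1$, the hypotheses of Theorem~\ref{thm:dis.needlets.err.Wp} are met. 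Applying that theorem pathwise gives $\normb{\RFc(\omega)-\disneapx(\RFc;\omega)}{\Lp{2}{d}}\le c\,2^{-\neord s}\,\norm{\RFc(\omega)}{\sob{2}{s}{d}}$ with $c$ depending only on $d,s,\fiN,\fis$, which combined with the identity above completes the argument.

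This reduction is essentially routine, as all the machinery is already in place; the only point requiring care is the legitimacy of applying a deterministic approximation bound to the random sample paths. This is justified precisely because $\RFc(\omega)\in\sob{2}{s}{d}$ for almost every $\omega$, and because $\disneapx$, being a finite linear combination of point evaluations of $\RF$ at the fixed quadrature nodes $\pH{i}$, acts on each realisation exactly as it would on a fixed function, so that the deterministic estimate transfers verbatim to each sample path.
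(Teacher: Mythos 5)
Your proposal is correct and follows essentially the same route as the paper, which derives the result by combining Theorem~\ref{thm:dis.needlets.err.Wp} (applied pathwise with $p=2$) with Corollary~\ref{cor:APS.RF.in.sob} and the centering identity \eqref{eq:disneapx.pntw.err.RF.RFc}. Your additional remarks on why the deterministic bound transfers to sample paths make explicit what the paper leaves implicit, but the argument is the same.
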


As shown by \cite{WaLeSlWo2016}, the discrete needlet approximation \eqref{eq:disneapx.RF} is equivalent to filtered hyperinterpolation, which we now introduce.
The \emph{filtered hyperinterpolation approximation} \cite{LeMh2008,SlWo2012} with a filtered kernel $\vdh{L,\fil}$ in \eqref{eq:filter.sph.ker} and discretisation quadrature $\QH$ in \eqref{eq:QH} is
\begin{equation*}
    \fihyper{L,\fil,N}(\RF;\PT{x}) := \fihyper[d]{L,\fil,N}(\RF;\PT{x}) := \InnerDb{\RF,\vdh{L,\fil}(\cdot\cdot \PT{x})}
     := \sum_{i=1}^{N} \wH\: \RF(\pH{i})\: \vdh{L,\fil}(\pH{i}\cdot \PT{x}), \quad L\in \Nz.
\end{equation*}
In \cite[Theorem~4.1]{WaLeSlWo2016} it is shown that this is identical to $\disneapx(\RF;\PT{x})$ if $L=2^{\neord-1}$ and $\fil=\fiH$, where $\fiH$ is given by \eqref{eq:fiH}. That is, for a random field $\RF$ on $\sph{d}$ and $\neord\in \Nz$,
\begin{equation}\label{eq:fihyper.disneapx}
    \disneapx(\RF;\omega,\PT{x})=\fihyper{2^{\neord-1},\fiH,N}(\RF;\omega,\PT{x})= \InnerDb{\RF(\omega),\vdh{2^{\neord-1},\fiH}(\cdot\cdot \PT{x})}, \quad \omega\in\probSp,\; \PT{x}\in\sph{d}.
\end{equation}

\subsection{Boundedness of discrete needlet approximation}\label{subsec:Bd.disneapx.Lppsph}
Similarly to the semidiscrete needlet approximation, the $\Lppsph{p}{d}$-norm, $1\le p<\infty$, of $\disneapx(\RF)$ is bounded by the $\Lppsph{\ceil{p}}{d}$-norm of the random field $\RF$ when $\RF$ is $\ceil{p}$-weakly isotropic and the filter $\fiN$ is sufficiently smooth, as proved in the following theorem, cf. Theorem~\ref{thm:Bd.neapx.Lppsph}.
\begin{theorem}[Boundedness of discrete needlet approximation]\label{thm:Bd.disneapx.Lppsph} Let $d\ge 2$, $1\le p< \infty$, $\neord\in\Nz$, $c_{0}>0$. Let $\RF$ be a $\ceil{p}$-weakly isotropic random field on $\sph{d}$. Let $\QH$ be a discretisation quadrature exact for degree at least $c_{0} \:2^{\neord}$ and let $\fiN$ be needlet filter in \eqref{subeqs:fiN} satisfying $\fiN\in\CkR$ and $\fis\ge \floor{\frac{d+3}{2}}$. Then
\begin{equation*}
    \normb{\disneapx(\RF)}{\Lppsph{p}{d}}\le c\:\norm{\RF}{\Lppsph{\ceil{p}}{d}},
\end{equation*}
where the constant $c$ depends only on $d$, $c_{0}$, $\fiN$ and $\fis$.
\end{theorem}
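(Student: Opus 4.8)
The plan is to mirror the proof of the semidiscrete bound (Theorem~\ref{thm:Bd.neapx.Lppsph}), replacing the convolution integral by the discretisation sum. By \eqref{eq:fihyper.disneapx}, $\disneapx(\RF;\omega,\PT{x})=\sum_{i=1}^{N}\wH\,\RF(\omega,\pH{i})\,\vdh{2^{\neord-1},\fiH}(\pH{i}\cdot\PT{x})$. First I would apply the Fubini theorem to write $\normb{\disneapx(\RF)}{\Lppsph{p}{d}}^{p}=\int_{\sph{d}}\norm{\disneapx(\RF;\PT{x})}{\Lpprob{p}}^{p}\IntDiff{x}$, and then use Minkowski's inequality for finite sums in $\Lpprob{p}$ (the discrete counterpart of the Minkowski integral inequality employed in Theorem~\ref{thm:Bd.neapx.Lppsph}) to bound $\norm{\disneapx(\RF;\PT{x})}{\Lpprob{p}}\le\sum_{i=1}^{N}\wH\,\norm{\RF(\pH{i})}{\Lpprob{p}}\,\bigl|\vdh{2^{\neord-1},\fiH}(\pH{i}\cdot\PT{x})\bigr|$.

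Next I would invoke Theorem~\ref{thm:RF.nrm.Lpprob} to replace each factor $\norm{\RF(\pH{i})}{\Lpprob{p}}$ by the uniform bound $\norm{\RF(\PT{e}_{d})}{\Lpprob{\ceil{p}}}$; this constant factors out of the sum, and by Lemma~\ref{lm:RF.Lpp.Lppsph.nrms} it equals $\norm{\RF}{\Lppsph{\ceil{p}}{d}}$ --- this is precisely the step that produces the $\ceil{p}$ on the right-hand side. It then remains to bound $\int_{\sph{d}}\bigl(\sum_{i=1}^{N}\wH\,\bigl|\vdh{2^{\neord-1},\fiH}(\pH{i}\cdot\PT{x})\bigr|\bigr)^{p}\IntDiff{x}$, and since $\sigma_{d}$ is normalised it suffices to bound the inner sum uniformly in $\PT{x}$ by a constant. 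Note that, in contrast to the usual $\mathbb{L}_{p}$ operator-boundedness route, this crude uniform-in-$\PT{x}$ estimate disposes of every $p\in[1,\infty)$ with a single constant.

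The main obstacle is therefore the uniform discrete Lebesgue-constant estimate $\sup_{\PT{x}\in\sph{d}}\sum_{i=1}^{N}\wH\,\bigl|\vdh{2^{\neord-1},\fiH}(\pH{i}\cdot\PT{x})\bigr|\le c$, the discrete analogue of the continuous $\mathbb{L}_{1}$-kernel bound of Theorem~\ref{thm:filter.sph.ker.L1norm.UB}; equivalently it is the $\ContiSph{}\to\ContiSph{}$ operator norm of the filtered hyperinterpolation $\fihyper{2^{\neord-1},\fiH,N}$. It is here that the hypothesis that $\QH$ is exact to degree at least $c_{0}2^{\neord}$ enters: together with positivity of the weights, the Marcinkiewicz--Zygmund theory yields the weight bound $\wH\le c\,2^{-\neord d}$ and a covering and separation of the nodes at the scale $2^{-\neord}$. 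These combine with the strong localisation of the kernel (Theorem~\ref{thm:filter.sph.ker.UB}, applicable since $\fiH\in\CkR$ is constant on $[0,1]$) to compare the weighted sum, annulus by annulus in $\dist{\pH{i},\PT{x}}$, with a constant multiple of $\normb{\vdh{2^{\neord-1},\fiH}(\PT{x}\cdot\cdot)}{\Lp{1}{d}}$, itself bounded uniformly by Theorem~\ref{thm:filter.sph.ker.L1norm.UB}. This estimate is established in \cite{WaLeSlWo2016} and may be cited directly, with the resulting constant depending only on $d$, $c_{0}$, $\fiN$ and $\fis$. Collecting the bounds yields $\normb{\disneapx(\RF)}{\Lppsph{p}{d}}^{p}\le c^{p}\,\norm{\RF}{\Lppsph{\ceil{p}}{d}}^{p}$, which is the claim.
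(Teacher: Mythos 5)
Your proposal is correct and follows essentially the same route as the paper's proof: Fubini plus the representation \eqref{eq:fihyper.disneapx}, the triangle (Minkowski) inequality in $\Lpprob{p}$, Theorem~\ref{thm:RF.nrm.Lpprob} and Lemma~\ref{lm:RF.Lpp.Lppsph.nrms} to extract $\norm{\RF}{\Lppsph{\ceil{p}}{d}}$, and then the discrete Lebesgue-constant bound $\sum_{i=1}^{N}\wH\bigl|\vdh{2^{\neord-1},\fiH}(\PT{x}\cdot\pH{i})\bigr|\le c\,\normb{\vdh{2^{\neord-1},\fiH}(\PT{x}\cdot\cdot)}{\Lp{1}{d}}$, which the paper obtains from the $\mathbb{L}_{1}$ Marcinkiewicz--Zygmund inequality (using that the kernel is a polynomial of degree at most $2^{\neord}-1$ and the quadrature is exact to degree $c_{0}2^{\neord}$) combined with Theorem~\ref{thm:filter.sph.ker.L1norm.UB}. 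The only difference is bibliographic: the paper cites the Marcinkiewicz--Zygmund literature directly rather than \cite{WaLeSlWo2016} for this step, which does not change the argument.
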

\begin{proof}
For $1\le p<\infty$, by the Fubini theorem and \eqref{eq:fihyper.disneapx},
\begin{align*}
  \normb{\disneapx(\RF)}{\Lppsph{p}{d}}^{p}
  &= \int_{\sph{d}}\int_{\probSp}|\disneapx(\RF;\omega,\PT{x})|^{p}\:\Dpb\IntDiff{x}\\
  &= \int_{\sph{d}}\normB{\sum_{i=1}^{N}\wH\:\RF(\pH{i})\vdh{2^{\neord-1},\fiH}(\PT{x}\cdot\pH{i})}{\Lpprob{p}}^{p}\IntDiff{x}\notag\\
  &\le \int_{\sph{d}}\left|\sum_{i=1}^{N}\wH\:\norm{\RF(\pH{i})}{\Lpprob{p}}\bigl|\vdh{2^{\neord-1},\fiH}(\PT{x}\cdot\pH{i})\bigr|\right|^{p}\IntDiff{x},
\end{align*}
where the inequality uses the triangle inequality for $\Lpprob{p}$-norms.
By Theorem~\ref{thm:RF.nrm.Lpprob},
\begin{equation*}
\norm{\RF(\PT{y}_{i})}{\Lpprob{p}}
  \le \norm{\RF(\PT{e}_{d})}{\Lpprob{\ceil{p}}},
\end{equation*}
where $\PT{e}_{d}$ is the north pole of $\sph{d}$. This then gives
\begin{align}\label{eq:disne.RF.Lppsph.nrm}
  \normb{\disneapx(\RF)}{\Lppsph{p}{d}}^{p}
  &\le \norm{\RF(\PT{e}_{d})}{\Lpprob{\ceil{p}}}^{p}\int_{\sph{d}}\left(\sum_{i=1}^{N}\wH\:\bigl|\vdh{2^{\neord-1},\fiH}(\PT{x}\cdot\pH{i})\bigr|\right)^{p}\IntDiff{x}.
\end{align}
Since $\vdh{2^{\neord-1},\fiH}(\PT{x}\cdot\PT{y})$ is a polynomial of $\PT{x}$ or $\PT{y}$ of degree at most $2^{\neord}-1$, the $\mathbb{L}_{1}$ Marciekiewicz-Zygmund inequality \cite[Theorem~2.1]{Dai2006} with \cite[Lemma~1]{Reimer2000},
\cite[Lemma~2]{HeSl2006} and \cite[Lemma~3.2]{BrHe2007}, see also \cite[Theorem~3.3]{Mhaskar2006} and \cite[Theorem~3.1]{MhNaWa2001}, then gives
\begin{equation*}
\sum_{i=1}^{N}\wH\:\bigl|\vdh{2^{\neord-1},\fiH}(\PT{x}\cdot\pH{i})\bigr|
  \le c_{d,c_{0}}\:\normb{\vdh{2^{\neord-1},\fiH}(\PT{x}\cdot\cdot)}{\Lp{1}{d}}.
\end{equation*}
This together with Theorem~\ref{thm:filter.sph.ker.L1norm.UB}, Lemma~\ref{lm:RF.Lpp.Lppsph.nrms} and \eqref{eq:disne.RF.Lppsph.nrm} gives
\begin{equation*}
  \normb{\disneapx(\RF)}{\Lppsph{p}{d}}
  \le c_{d,c_{0},\fiN,\fis}\:\norm{\RF}{\Lppsph{\ceil{p}}{d}},
\end{equation*}
 thus completing the proof.
\end{proof}

\section{Convergence of variance}\label{sec:converge.cov.neapx.RF}
In this section, we prove that variances of errors of needlet approximations $\neapx$ and $\disneapx$ decay at the rate $2^{-\neord s}$ for a $2$-weakly isotropic random field on $\sph{d}$ with angular power spectrum $\APS{\ell}$ satisfying $\sum_{\ell=1}^{\infty}\APS{\ell}\ell^{2s+d-1}<\infty$.

\begin{theorem}[Variance of errors for semidiscrete needlets]\label{thm:var.err.neapx} Let $d\ge2$. Let $\RF$ be a $2$-weakly isotropic random field on $\sph{d}$ with angular power spectrum $\APS{\ell}$ satisfying $\sum_{\ell=1}^{\infty}\APS{\ell}\ell^{2s+d-1}<\infty$. Let $\fiN$ be a needlet filter given by \eqref{subeqs:fiN} and satisfying $\fiN\in \CkR$ with $\fis\ge \floor{\frac{d+3}{2}}$. Then, for $\neord\in\Nz$,
\begin{equation*}
    \var{\normb{\RF - \neapx(\RF)}{\Lp{2}{d}}} \le c\: 2^{-2\neord s}\:\expect{\norm{\RFc}{\sob{2}{s}{d}}^{2}},
\end{equation*}
where $\RFc$ is given by \eqref{eq:RFc} and the constant $c$ depends only on $d,s,\fiN$ and $\fis$.
\end{theorem}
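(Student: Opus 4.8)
The plan is to reduce this variance estimate to the second-moment estimate that has already been established in Theorem~\ref{thm:err.mean.L2.neapx}, using nothing more than the elementary inequality $\var{X}\le\expect{X^{2}}$.

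First I would introduce the non-negative scalar random variable $X:=\normb{\RF-\neapx(\RF)}{\Lp{2}{d}}$ on $\probSp$, whose variance is the quantity to be bounded. From the definition $\var{X}=\expect{X^{2}}-(\expect{X})^{2}$ together with the non-negativity of $(\expect{X})^{2}$, I obtain
\[
\var{\normb{\RF-\neapx(\RF)}{\Lp{2}{d}}}\le\expect{\normb{\RF-\neapx(\RF)}{\Lp{2}{d}}^{2}}.
\]

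Next I would observe that the hypotheses assumed here --- namely $2$-weak isotropy of $\RF$, the angular-power-spectrum decay $\sum_{\ell=1}^{\infty}\APS{\ell}\ell^{2s+d-1}<\infty$, and the filter smoothness $\fis\ge\floor{\frac{d+3}{2}}$ --- coincide exactly with those of Theorem~\ref{thm:err.mean.L2.neapx}. That theorem therefore applies directly and bounds the right-hand side above by $c\:2^{-2\neord s}\:\expect{\norm{\RFc}{\sob{2}{s}{d}}^{2}}$, with the constant $c$ depending only on $d$, $s$, $\fiN$ and $\fis$; this is precisely the asserted estimate.

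The argument presents no substantive obstacle, since its entire content is the inequality $\var{X}\le\expect{X^{2}}$ combined with a previously proved second-moment bound. The only point worth recording is the finiteness of the right-hand side: under the stated spectral condition, Theorem~\ref{thm:APS.RFc.in.sob} already guarantees $\RFc\in\sob{2}{s}{d}$ $\Pas$ and $\expect{\norm{\RFc}{\sob{2}{s}{d}}^{2}}<\infty$, so both $\expect{X^{2}}$ and $\var{X}$ are well-defined and the chain of inequalities is meaningful.
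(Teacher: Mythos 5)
Your proposal is correct and matches the paper's own argument: the paper likewise writes $\var{X}=\expect{X^{2}}-\{\expect{X}\}^{2}$, drops the squared-mean term, and applies Theorem~\ref{thm:err.mean.L2.neapx} (the paper's intermediate use of Jensen's inequality to bound $\expect{X}$ plays the same well-definedness role as your appeal to Theorem~\ref{thm:APS.RFc.in.sob}). No gaps; the two proofs are essentially identical.
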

\begin{proof} Jensen's inequality \eqref{eq:Jensen.ineq.probab} with Theorem~\ref{thm:err.mean.L2.neapx} gives
\begin{equation*}
    \expect{\normb{\RF-\neapx(\RF)}{\Lp{2}{d}}}
    \le \left\{\expect{\normb{\RF-\neapx(\RF)}{\Lp{2}{d}}^{2}}\right\}^{1/2}
    \le c_{d,s,\fiN,\fis}\: 2^{-\neord s}\:\left(\expect{\norm{\RFc}{\sob{2}{s}{d}}^{2}}\right)^{1/2}.
\end{equation*}
This with Theorem~\ref{thm:err.mean.L2.neapx} (again) and the property of the variance gives
\begin{align*}
    \var{\normb{\RF - \neapx(\RF)}{\Lp{2}{d}}}
        &= \expect{\normb{\RF - \neapx(\RF)}{\Lp{2}{d}}^{2}} - \left\{\expect{\normb{\RF - \neapx(\RF)}{\Lp{2}{d}}}\right\}^{2}\\[1mm]
        &\le c\: 2^{-2\neord s}\:\expect{\norm{\RFc}{\sob{2}{s}{d}}^{2}},
\end{align*}
where the constant $c$ depends only on $d,s,\fiN$ and $\fis$.
\end{proof}

A similar result to Theorem~\ref{thm:var.err.neapx} holds for fully discrete needlets, as follows.
\begin{theorem}[Variance of errors for fully discrete needlets]\label{thm:var.err.disneapx} Let $d\ge2$, $s>d/2$, $\neord\in\Nz$. Let $\RF$ be a $2$-weakly isotropic random field on $\sph{d}$ with angular power spectrum $\APS{\ell}$ satisfying $\sum_{\ell=1}^{\infty}\APS{\ell}\ell^{2s+d-1}< \infty$. Let $\QH$ be a discretisation quadrature exact for degree $3\cdot 2^{\neord-1}-1$ and let $\fiN$ be a needlet filter given by \eqref{subeqs:fiN} and satisfying $\fiN\in \CkR$ with $\fis\ge \floor{\frac{d+3}{2}}$. Then
\begin{equation*}
    \var{\normb{\RF - \disneapx(\RF)}{\Lp{2}{d}}} \le c\: 2^{-2\neord s}\:\expect{\norm{\RFc}{\sob{2}{s}{d}}^{2}},
\end{equation*}
where $\RFc$ is given by \eqref{eq:RFc} and the constant $c$ depends only on $d,s,\fiN$ and $\fis$.
\end{theorem}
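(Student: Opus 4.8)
The plan is to follow the template of the proof of Theorem~\ref{thm:var.err.neapx} almost verbatim, replacing the semidiscrete mean-error bound (Theorem~\ref{thm:err.mean.L2.neapx}) by its fully discrete counterpart (Theorem~\ref{thm:err.mean.L2.disneapx}). First I would invoke Theorem~\ref{thm:APS.RFc.in.sob}: since $\sum_{\ell=1}^{\infty}\APS{\ell}\ell^{2s+d-1}<\infty$ and $s>d/2$, the centred field satisfies $\RFc\in\sob{2}{s}{d}$ $\Pas$ and $\expect{\norm{\RFc}{\sob{2}{s}{d}}^{2}}<\infty$, so the right-hand side of the claimed bound is finite and the hypotheses of Theorem~\ref{thm:err.mean.L2.disneapx} are met.

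Writing $X:=\normb{\RF-\disneapx(\RF)}{\Lp{2}{d}}$, I would use the elementary variance identity together with the non-negativity of $\left(\expect{X}\right)^{2}$ to get
\begin{equation*}
  \var{X}=\expect{X^{2}}-\left(\expect{X}\right)^{2}\le \expect{X^{2}}.
\end{equation*}
The mean-square term is exactly what Theorem~\ref{thm:err.mean.L2.disneapx} controls, giving
\begin{equation*}
  \expect{X^{2}}=\expect{\normb{\RF-\disneapx(\RF)}{\Lp{2}{d}}^{2}}\le c\:2^{-2\neord s}\:\expect{\norm{\RFc}{\sob{2}{s}{d}}^{2}},
\end{equation*}
with $c$ depending only on $d$, $s$, $\fiN$ and $\fis$, which is the desired estimate. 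As in the semidiscrete proof one may first bound $\expect{X}$ via Jensen's inequality \eqref{eq:Jensen.ineq.probab} applied to the convex function $\varphi(x)=x^{2}$ to obtain $\expect{X}\le\left(\expect{X^{2}}\right)^{1/2}$, but this step is not strictly needed for the variance bound since the subtracted term is simply dropped.

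I do not expect any genuine obstacle here: all the analytic difficulty has already been absorbed into Theorem~\ref{thm:err.mean.L2.disneapx}, which in turn rests on the deterministic discrete needlet error bound (Theorem~\ref{thm:dis.needlets.err.Wp}) and on the fact that $\disneapx$ reproduces constants, so that $\RF-\disneapx(\RF)=\RFc-\disneapx(\RFc)$ (equation~\eqref{eq:disneapx.pntw.err.RF.RFc}). The threshold $s>d/2$ is required precisely so that Theorem~\ref{thm:dis.needlets.err.Wp} applies with $p=2$; this is the only place the smoothness assumption enters. The remaining content is the one-line variance identity above.
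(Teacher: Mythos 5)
Your proposal is correct and follows essentially the same route as the paper, which proves this theorem by applying Theorem~\ref{thm:err.mean.L2.disneapx} together with Jensen's inequality in exactly the same way as the proof of Theorem~\ref{thm:var.err.neapx}. Your observation that the Jensen step can be skipped by simply discarding the nonnegative term $\left(\expect{\normb{\RF-\disneapx(\RF)}{\Lp{2}{d}}}\right)^{2}$ is only a minor streamlining of the same argument, not a different approach.
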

The proof, using Theorem~\ref{thm:err.mean.L2.disneapx} and Jensen's inequality, is similar to that for Theorem~\ref{thm:var.err.neapx}.

\section{Numerical examples}\label{sec:numer.examples}
In this section, we present an algorithm for the implementation of discrete needlet approximations of random fields on $\sph{d}$. Examples for isotropic Gaussian random fields on $\sph{2}$ are given, supporting the results in previous sections.

\subsection{Calculating a needlet approximation}
Consider computing the discrete needlet approximation $\disneapx(\RF;\widetilde{\PT{x}}_{i})$ of order $\neord\in\Nz$ with needlet filter $\fiN$ at a set of points $\{\widetilde{\PT{x}}_{i}: i=1,\dots,\widetilde{N}\}$. The needlet quadrature rules $\{(\wN,\pN{jk}):k=1,\dots,N_{j}\}$ are,
by definition, exact for polynomials of degree $2^{j+1}-1$.
\begin{algorithm}\label{alg:disneapx.RF}
Choose $\sampnum$ independent samples $\sampdis_{1},\dots,\sampdis_{\sampnum}$
and generate the Fourier coefficients $a_{\ell m}$ as in Section~\ref{S:GenGRF} below.

\begin{enumerate}
\item For every $\sampdis_{n}$, $n=1,\dots,\sampnum$, compute the analysis and synthesis for $\RF(\sampdis_{n})$.
\item Analysis: Compute the discrete needlet coefficients $\InnerD{\RF(\sampdis_{n}), \needlet{jk}}$, $k=1,\dots,N_{j},\;j=0,\dots,\neord$, using a discretization quadrature rule $\QH:=\QH[](N,3\cdot 2^{\neord-1}-1):=\{(\wH,\pH{i}):i=1,\dots,N\}$.
\item Synthesis: Compute the discrete needlet approximation $\sum_{j=0}^{\neord}\sum_{k=1}^{N_{j}}\InnerD{\RF(\sampdis_{n}), \needlet{jk}} \needlet{jk}(\widetilde{\PT{x}}_{i})$, $i=1,\dots, \widetilde{N}$.
\end{enumerate}
\end{algorithm}
Since Algorithm~\ref{alg:disneapx.RF} is actually the repeated use of \cite[Algorithm~5.1]{WaLeSlWo2016}, for the detailed description of quadrature rules and filters we refer to \cite[Section~5]{WaLeSlWo2016}.

In the numerical experiments, symmetric spherical designs \cite{Womersley_ssd_URL} are used
for both needlet quadrature rules and the discretisation quadrature $\QH$, and the sizes
of the needlet quadratures for all levels $j$, $j=0,\ldots,7$,  are given in Table~\ref{tab:Nj}.
A needlet filter $h \in \CkR[5]$ using piecewise polynomials is used as in \cite{WaLeSlWo2016}.

\begin{table}
\begin{center}
\begin{tabular}{lllllllll}
\hline
Level $j$ & 0 & 1 & 2  &  3  & 4   & 5    & 6    & 7 \\
\hline
$N_j$     & 2 & 6 & 32 & 120 & 498 & 2018 & 8130 & 32642 \\
\hline
\end{tabular}
\caption{The number of needlets at different levels}\label{tab:Nj}
\end{center}
\end{table}

\subsection{Generating Gaussian random fields}\label{S:GenGRF}
We use Gaussian random fields on $\sph{2}$ as numerical examples.
Let $a_{\ell,m}$, $\ell\in\Nz$, $m=1,\dots,2\ell+1$, be independent random variables on $\Rone$ following a normal distribution $\normal{\mu_{\ell}}{\sigma_{\ell}^{2}}$ with mean $\mu_{\ell}:=0$ for $\ell\ge1$ and variance
\begin{equation}\label{eq:var.APS.GRF.tr.KL}
    \sigma_{\ell}^{2}:= \APS{\ell},\;  \ell \ge0,
    \quad \hbox{where~}
    \APS{\ell}\hspace{-0.8mm}:= \APS{\ell,\delta}\hspace{-0.8mm}:= 1/(1+\delta \ell)^{2s+2}.
\end{equation}
The scaling factor $\delta>0$ is used to delay the decay of the angular power spectrum $\APS{\ell}$ in \eqref{eq:var.APS.GRF.tr.KL},
increasing the importance of high frequency components as $\delta$ decreases.
Small values of $\delta$ correspond to a short correlation length (in the sense of geodesic distance)
making the random field ``rougher'' and more difficult to approximate,
but $\delta$ does not change the Sobolev index of the random field.

Let
\begin{equation}\label{eq:RF.tr.KL}
 \RF_{\delta,\KLtr}(\omega,\PT{x}) := \sum_{\ell=0}^{\KLtr}\sum_{m=1}^{2\ell+1} a_{\ell m}(\omega) Y_{\ell,m}(\PT{x}),
\end{equation}
so that in the notation of Section 4 we have
$a_{\ell m} = (\widehat{T}_{\delta,\KLtr})_{\ell m}$.
By \cite[Theorem~6.12, Remark~6.13]{MaPe2011}, given $\delta>0$ and $\KLtr\in \Nz$, the $\RF_{\delta,\KLtr}$ in \eqref{eq:RF.tr.KL} is a Gaussian random field on $\sph{2}$ with expectation
\begin{equation*}
    \expect{\RF_{\delta,\KLtr}(\PT{x})}=\expect{a_{0 1}Y_{0,1}(\PT{x})} = \expect{a_{0 1}} = \mu_{0},\quad \PT{x}\in\sph{2},
\end{equation*}
where we used the linearity of expectation and $\expect{a_{\ell m}}=0$ for $\ell\ge1$.

\subsection{Errors of discrete needlet approximation}
\begin{figure}
  \centering
  \begin{minipage}{\textwidth}
  \begin{minipage}{0.485\textwidth}
  \centering
  \includegraphics[trim = 0mm 0mm 0mm 0mm, width=1.03\textwidth]{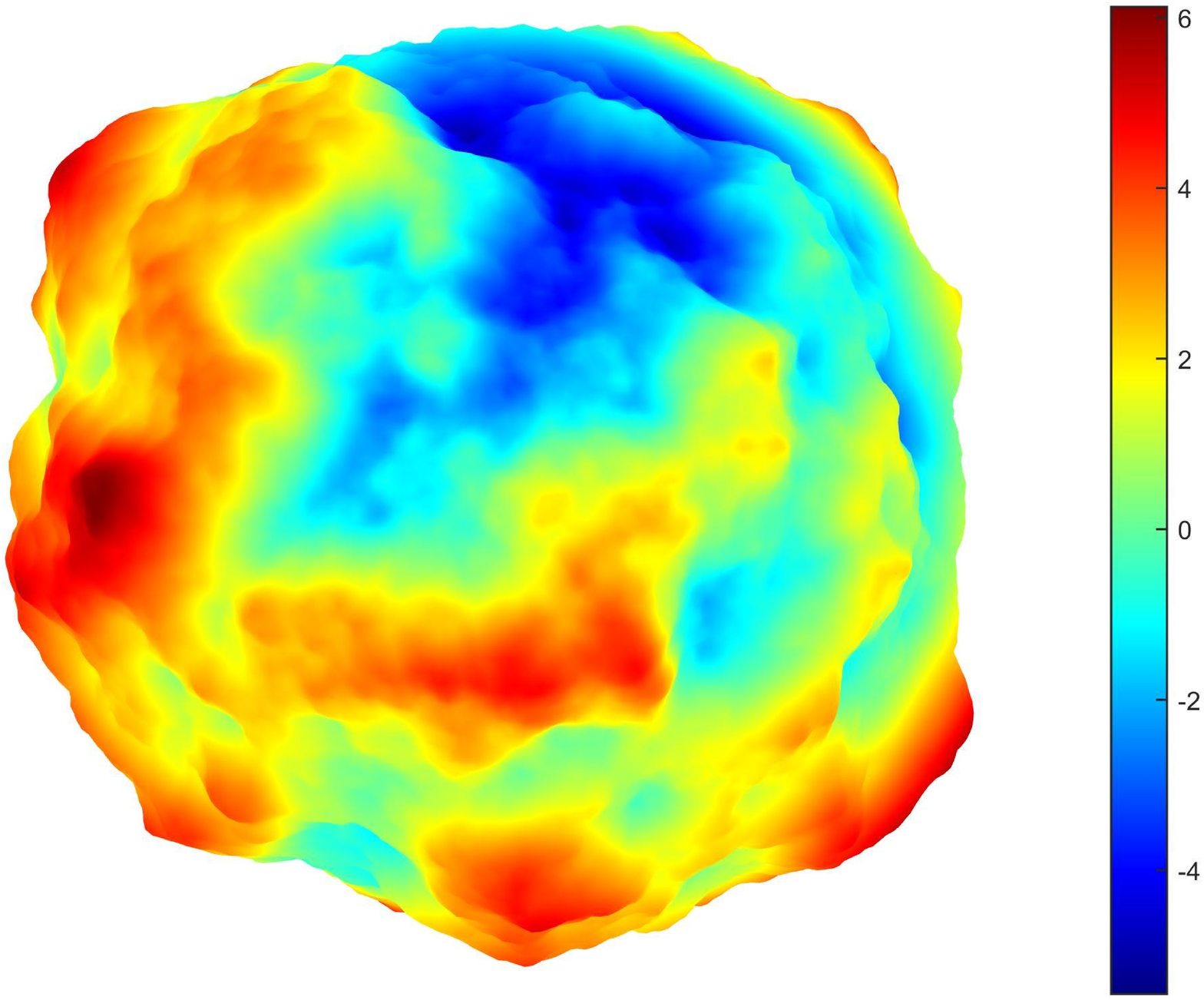}\\[-5mm]
  \subcaption{Original~~~~~}\label{fig:scGRF.one.reali}
  \end{minipage}
  \hspace{0.02\textwidth}
  \begin{minipage}{0.485\textwidth}
  \centering
  \includegraphics[width=\textwidth]{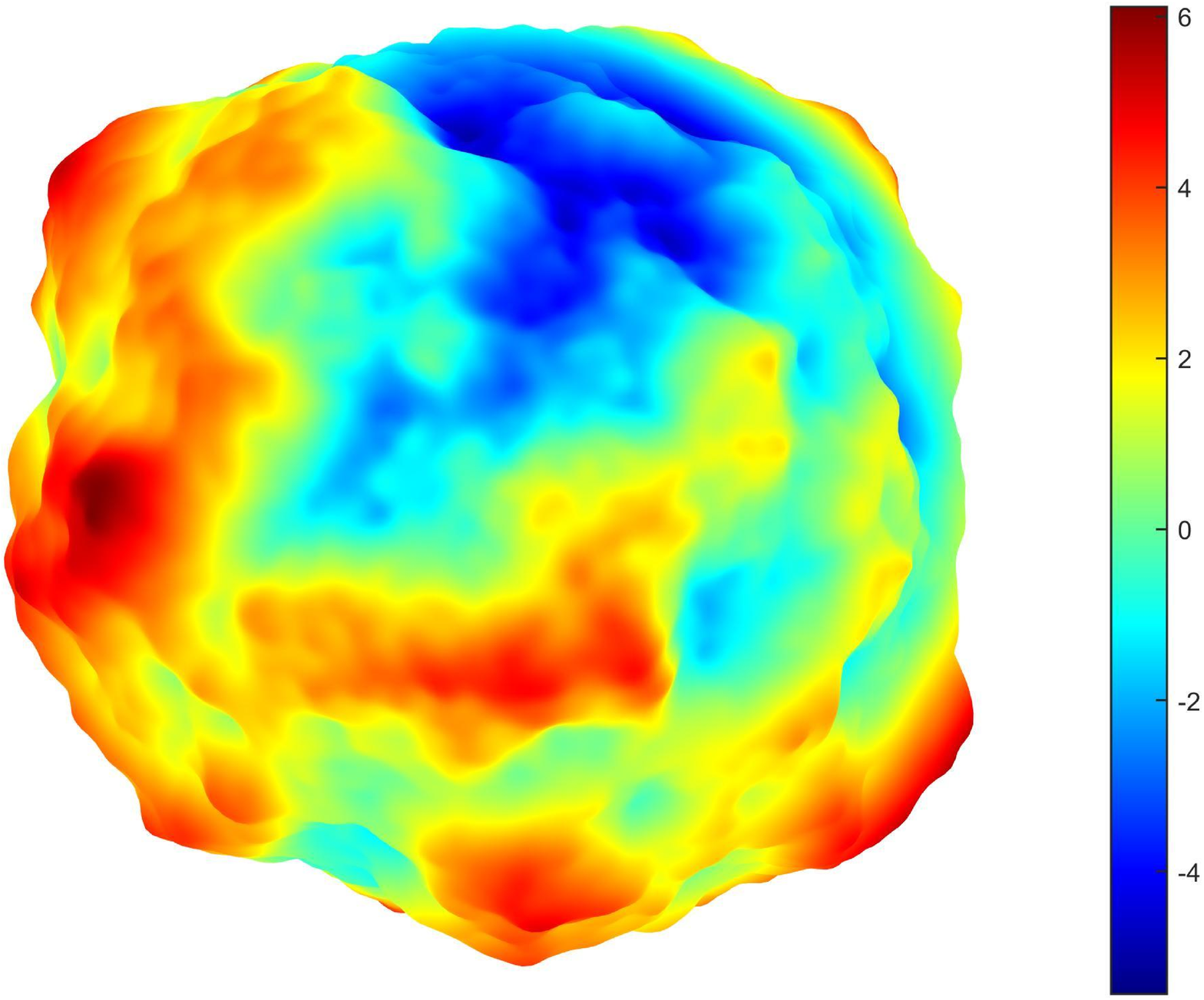}\\[-4mm]
  \subcaption{Fully discrete approximation}\label{fig:disne.scGRF.one.reali}
  \end{minipage}
  \end{minipage}\\[2mm]
  \begin{minipage}{0.5\textwidth}
  \centering
  \includegraphics[trim = 0mm 0mm 0mm 0mm, width=1.05\textwidth]{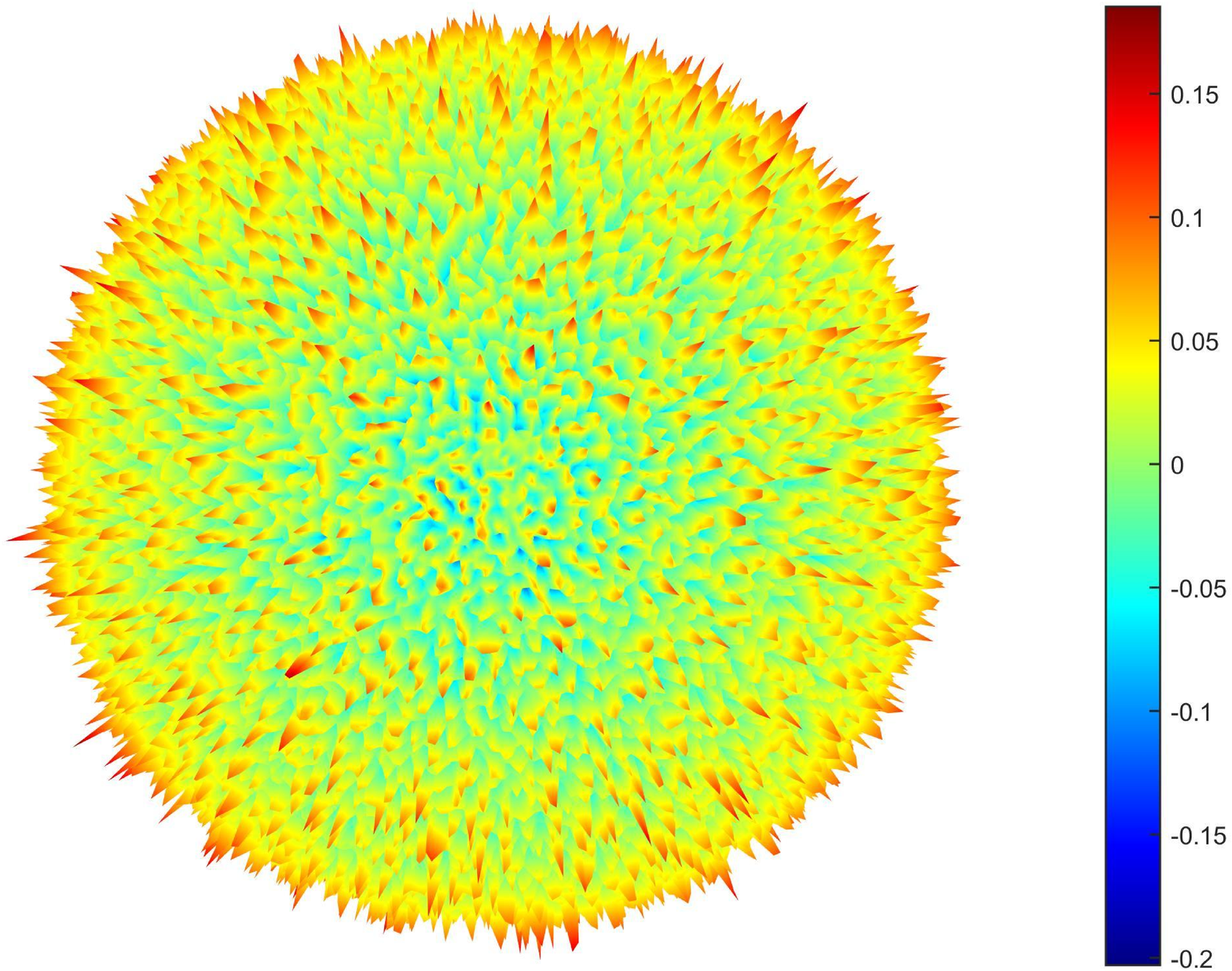}\\[-4mm]
  \subcaption{Pointwise approximation errors}\label{fig:err.disne.scGRF.J6}
  \end{minipage}\\[-1mm]
  \caption{(a) One realisation of the scaled centered Gaussian random field $\RF_{\delta,\KLtr}$,
$\delta=1/5$, $s=1.5$, $\KLtr=300$ so that $\RF_{\delta,\KLtr}\in \sob{2}{s}{2}$, $\Pas$.
(b) Fully discrete needlet approximation with $\neord = 7$.
(c) Needlet approximation errors $\RF(\PT{x}) - \disneapx(\RF;\PT{x})$.} \label{fig:disne.scGRF.J6}
\end{figure}
\begin{figure}
  \centering
  \begin{minipage}{0.48\textwidth}
  \centering
  \includegraphics[trim = 0mm 0mm 0mm 3mm, width=0.935\textwidth]{Fig2_delta_1.eps}\\[1mm]
  \subcaption{$\delta=1$}\label{fig:L2err.disne.scGRF.J7.del1}
  \end{minipage}
  \begin{minipage}{0.48\textwidth}
  \centering
  \includegraphics[trim = 0mm 0mm 0mm 0mm, width=0.9\textwidth]{Fig2_delta_5.eps}\\[1mm]
  \subcaption{$\delta=1/5$}\label{fig:L2err.disne.scGRF.J7.del5}
  \end{minipage}
  \caption{Mean $\mathbb{L}_{2}$-errors for discrete needlet approximations of $\RF_{\delta,\KLtr}$, $\KLtr=300$, $\neord=0,\dots,7$, $\fiN\in \CkR[5]$, $s=1.5,2.5$}\label{fig:L2err.disne.scGRF.J7}
\end{figure}
We consider centered Gaussian random fields of \eqref{eq:RF.tr.KL}, i.e. $\mu_{0}=0$ and $a_{\ell m}$
has variance $A_\ell$, with $A_\ell$ as in \eqref{eq:var.APS.GRF.tr.KL}.
Figure~\ref{fig:scGRF.one.reali} shows
one realisation $\RF_{\delta,\KLtr}(\sampdis)$ with $\delta=1/5$, $s=1.5$ and $\KLtr=300$,
while Figure~\ref{fig:disne.scGRF.one.reali}
shows the fully discrete needlet approximation $\disneapx(\RF_{\delta,\KLtr};\sampdis)$
with $J=7$ (which is a polynomial of degree $127$).
Figure~\ref{fig:err.disne.scGRF.J6} shows the pointwise error $\disneapx(\RF_{\delta,\KLtr};\sampdis)-\RF_{\delta,\KLtr}(\sampdis)$ as a function on $\sph{2}$. Figure~\ref{fig:disne.scGRF.J6} demonstrates that using a discrete needlet approximation up to order $7$ can faithfully reconstruct a realisation of a Gaussian random field with low smoothness, short correlation length and high frequency components, losing only some fine details. The approximation errors in Figure~\ref{fig:err.disne.scGRF.J6} come from the high frequency components in the Gaussian random field and are as good as can be expected, with positive and negative oscillations of roughly equal magnitudes. These missing details can be retrieved by adding higher levels in the needlet approximation.

Computationally we need to both estimate the expected value
and approximate the integral over the sphere for the $\Lp{2}{2}$ norm.
The first is done using the sample mean of $\Lp{2}{2}$-errors of $\disneapx(\RF;\sampdis_{n})$ for $n=1,\dots,\sampnum$.
The second is done using another quadrature rule $\bigl\{(\weval,\peval{i}):i=1,\dots,\Neval\bigr\}$
as follows.
\begin{align}\label{eq:disne.RF.numer.approx}
    \expect{\normb{\RF - \disneapx(\RF)}{\Lp{2}{2}}^{2}}
    &= \expect{\int_{\sph{2}} \bigl|\RF(\PT{x}) - \disneapx(\RF;\PT{x}) \bigr|^{2} \IntDiff[2]{x}}\notag\\
    &\approx \expect{\sum_{i=1}^{\Neval} \weval\: \bigl(\RF(\peval{i}) - \disneapx(\RF;\peval{i})\bigr)^{2}}\notag\\
    &\approx \frac{1}{\sampnum}\sum_{n=1}^{\sampnum}\sum_{i=1}^{\Neval} \weval\:
     \bigl(\RF(\sampdis_{n},\peval{i}) - \disneapx(\RF;\sampdis_{n},\peval{i}) \bigr)^{2}.
\end{align}
To illustrate the convergence order in Theorem~\ref{thm:err.mean.L2.disneapx} we use the root mean square error
\begin{equation}\label{eq:relative.err.RF}
   \err := \sqrt{\expect{\normb{\RF-\disneapx(\RF)}{\Lp{2}{2}}^{2}}}.
\end{equation}

Figure~\ref{fig:L2err.disne.scGRF.J7} shows mean $\mathbb{L}_{2}$-errors \eqref{eq:relative.err.RF}
of the discrete needlet approximation $\disneapx$ with order $\neord=0,\dots,7$ for
$\RF_{\delta,\KLtr}$ with $\mu_{0}=0$, $\delta=1,1/5$, $\KLtr=300$ and $s=1.5$ and $2.5$,
using the formula \eqref{eq:disne.RF.numer.approx} with $\sampnum=100$
and using a symmetric $301$-design~\cite{Womersley_ssd_URL} with $\Neval = 45,454$ to estimate the $L_2$ error.
The graphs show convincingly that mean $\mathbb{L}_{2}$-errors decay at a rate close to $2^{-\neord s}$
for both $s=1.5$ and $2.5$, supporting the assertion of Theorem~\ref{thm:err.mean.L2.disneapx}.
When the scaling factor $\delta$ is smaller, i.e. when the random field is ``rougher'',
higher levels are required before the asymptotic rate becomes apparent.
The graphs also show that in both cases the sample variances converge to zero as $\neord$ increases, which supports Theorem~\ref{thm:var.err.disneapx}.

\subsection{Comparison of needlet and Fourier approximations}

In this section we compare the needlet and truncated Fourier approximations.
In practice we compare the discrete versions of the two approximation schemes:
the discrete needlet approximation on the one hand,
and on the other the so-called ``hyperinterpolation'' approximation \cite{Sloan1995,SlWo2000,HeSl2007}.
The \emph{hyperinterpolation} approximation of degree $L$, denoted by $\hyper(\RF;\omega,\PT{x})$,
truncates the Fourier expansion in \eqref{eq:KL.expan.RF} at $\ell=L$
and replaces the integral in the inner product $\InnerL{\RF,\shY}$ by
a quadrature rule $\QH:=\{(\wH,\pH{i})|i=1,\dots,N\}$
on $\sph{2}$ exact for degree $2L$.

In making this comparison it is not immediately clear what is a fair experiment.
The level $J$ needlet approximation is a polynomial of degree $2^J-1$,
so for $J = 7$ (as in all the experiments in this section)
the needlet is a polynomial of degree $127$.
On the other hand, by design it reproduces spherical harmonics of degree up to degree only $2^{J-1}$, or $64$ in this case.
In the following experiments we choose to take the degree of the hyperinterpolation approximation to be $128$.
In this way a clear advantage is given to the Fourier projection (as represented by its surrogate, hyperinterpolation),
since it is well known that the Fourier projection is the optimal $L_2$ approximation.

On the other hand, needlets have a clear advantage when the aim is not a global approximation of uniform quality,
but rather a high quality approximation over a limited region of the sphere.

In the following experiment the function to be approximated is defined by
\begin{equation}\label{eq:RF.KL.plus.fhump}
  \RF_{\delta,\KLtr}^{(1)}(\omega,\PT{x}) := \RF_{\delta,\KLtr}(\omega,\PT{x}) + f_{\mbox{ccap}}(\PT{x}) ,
  \quad \omega\in\probSp,\; \PT{x}\in\sph{2},
\end{equation}
where, using $\dist{\PT{x},\PT{x}_{0}}:=\arccos(\PT{x}\cdot\PT{x}_{0})$,
\begin{equation*}
  f_{\mbox{ccap}}(\PT{x}) := \left\{
  \begin{array}{ll}
  \cos\left(\frac{\pi}{2} \frac{\dist{\PT{x},\PT{x}_{0}}}{r}\right) & \dist{\PT{x},\PT{x}_{0}}\le r, \\
  0, & \dist{\PT{x},\PT{x}_{0}} > r;
  \end{array}
  \right.
\end{equation*}
is the $C^0$ cosine cap function supported on a spherical cap of radius $r$ about $\PT{x}_0$ (see \cite{AnChSlWo2012}).
The function~(\ref{eq:RF.KL.plus.fhump}) can be interpreted as a random field in which the mean field is not constant over the sphere,
but instead has a cosine cap in a neighbourhood of $\PT{x}_0$, which we take to be the north pole.
The function $f_{\mbox{ccap}}$ is in $\ContiSph[2]{}$ but not in any Sobolev space $\sob{2}{s}{2}$ for $s>1$.
The boundary of the support where $\dist{\PT{x},\PT{x}_{0}} = r$ is the most difficult part to approximate.
Figure~\ref{fig:GRFhump_original} shows one realisation of this random field
$\RF_{\delta,M}^{(1)}$ in \eqref{eq:RF.KL.plus.fhump} with $\delta=1$, $M=130$ and $s=1.5$.

We take the point of view that there is a particular interest in approximating the random field in the polar region,
where the cosine cap occurs.
For that reason we take a relatively low order needlet approximation globally, with $J = 4$,
but in the region where $\arccos(\PT{x}\cdot\PT{x}_{0}) \leq \pi/3$ we increase the order to $J = 7$.
The localised needlet approximation in Figure~\ref{fig:GRFhump_locNeed7_tr4} is excellent in the polar cap
where needlets of all levels up to $J = 7$ are used.
It still achieves a good approximation over the remainder of the sphere where
only needlets up to level $4$ are used, as illustrated in the error plot in Figure~\ref{fig:GRFhump_locNeed7_error}.
We refer to the Riemann localisation of the filtered approximation \cite{WaSlWo2016} for an explanation
of the behaviour of local approximation by needlets.
With symmetric spherical designs, the localised (in a cap of radius $\pi/3$)
needlet approximation in Figure~\ref{fig:GRFhump_locNeed7_tr4} uses a total of $11,341$ needlets.
This is around $26\%$ of the $43,448$ needlets used in fully discrete needlet approximation
for all levels up to $J = 7$ over the whole sphere, see Table~\ref{tab:Nj}.
Thus needlets efficiently allow the local concentration of points in regions of most interest.

Finally, Figure~\ref{fig:GRFhump_Hyper7_error} shows the error in the hyperinterpolation approximation of degree $128$.
Of course in the Fourier case there is no capacity for localising the approximation.

\begin{figure}
  \centering
   \begin{minipage}{\textwidth}
  \begin{minipage}{0.485\textwidth}
  \centering
  \includegraphics[trim = 0mm 0mm 0mm 0mm, width=0.9\textwidth]{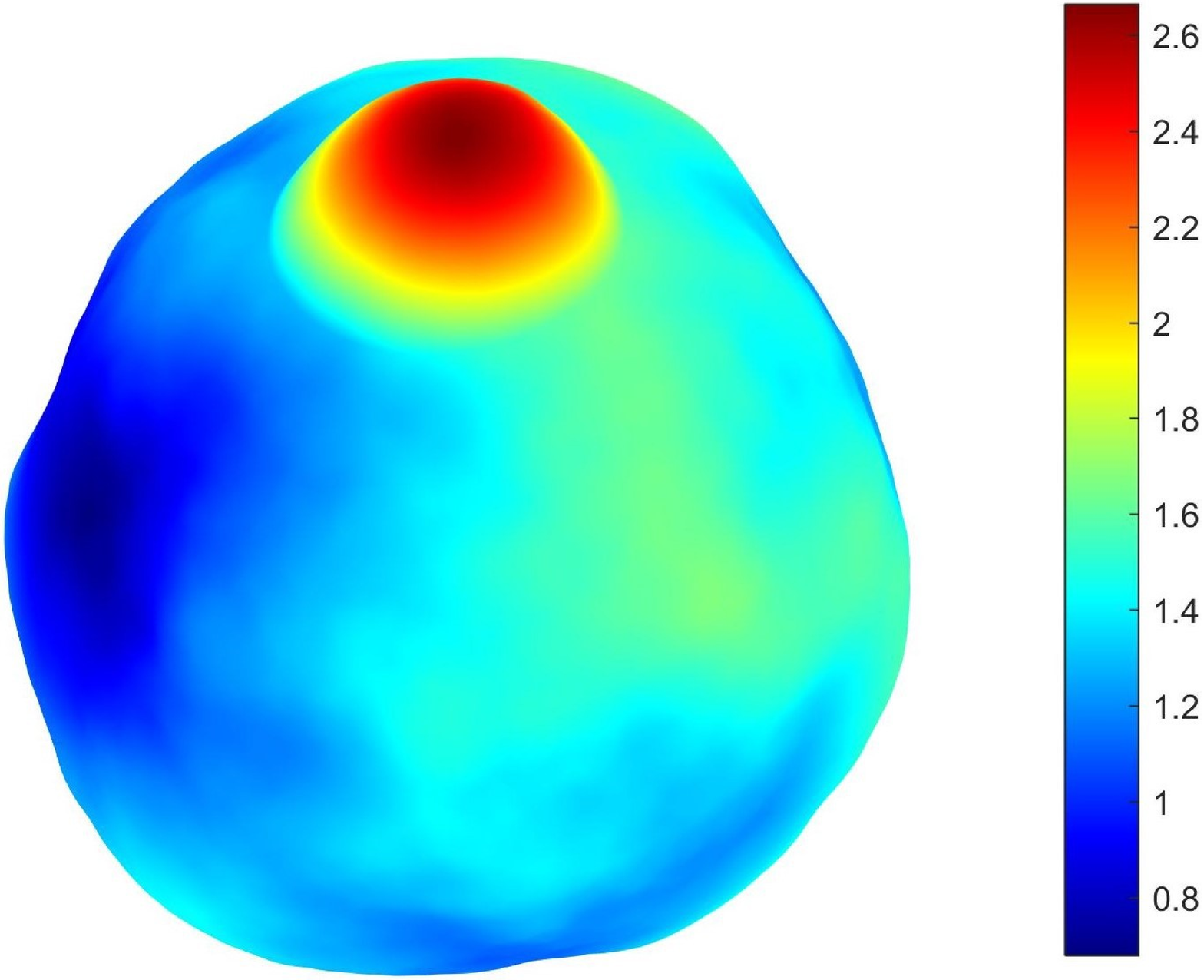}\\[1mm]
  \subcaption{A realisation of Gaussian random field $\RF_{\delta,M}^{(1)}$}\label{fig:GRFhump_original}
  \end{minipage}
  \hspace{0.02\textwidth}
  \begin{minipage}{0.485\textwidth}
  \centering
  \includegraphics[trim = 0mm 0mm 0mm 0mm, width=0.9\textwidth]{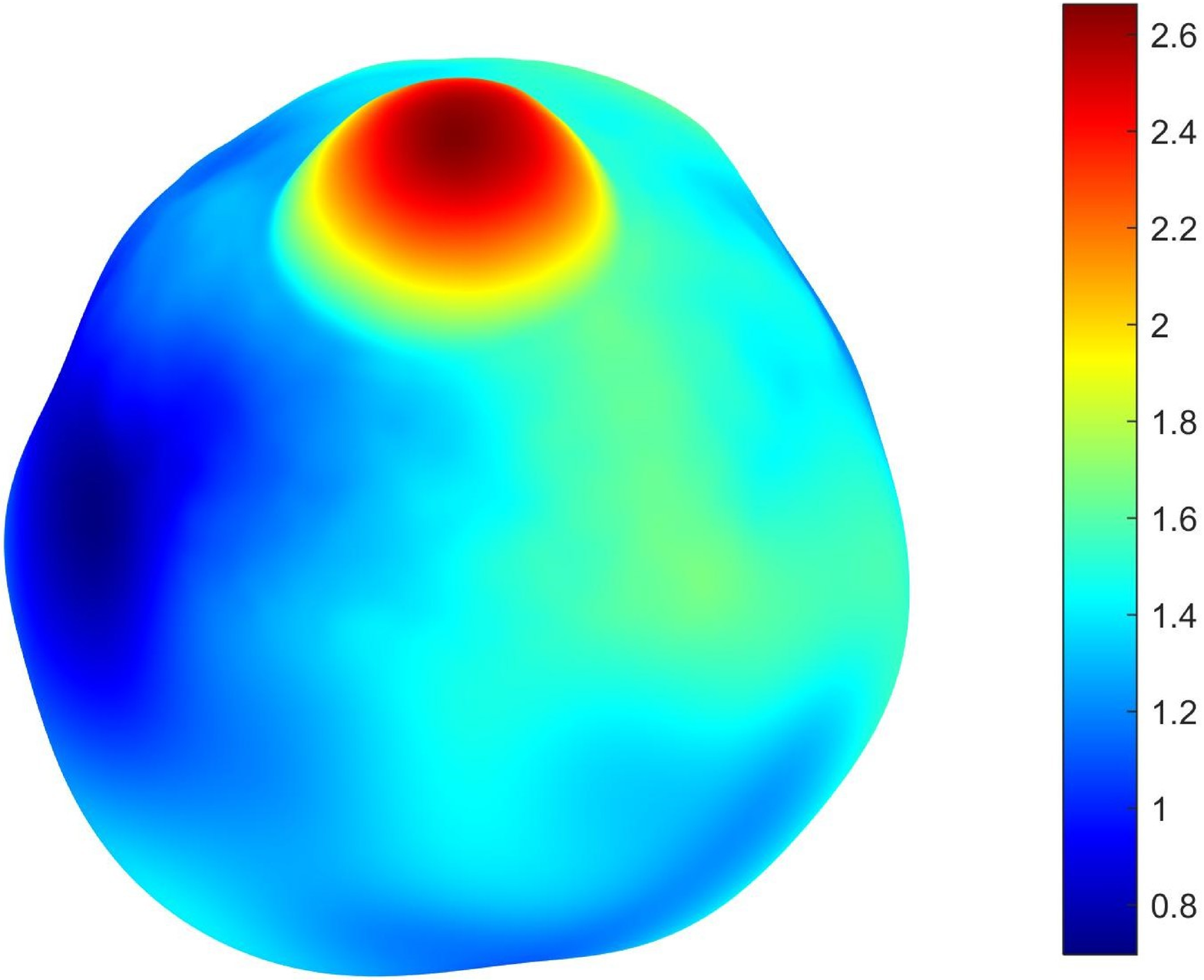}\\[1mm]
  \subcaption{Localised needlet approximation}\label{fig:GRFhump_locNeed7_tr4}
  \end{minipage}
  \end{minipage}\\[4mm]
  \begin{minipage}{\textwidth}
  \begin{minipage}{0.485\textwidth}
  \centering
  \includegraphics[trim = 0mm 0mm 0mm 0mm, width=0.9\textwidth]{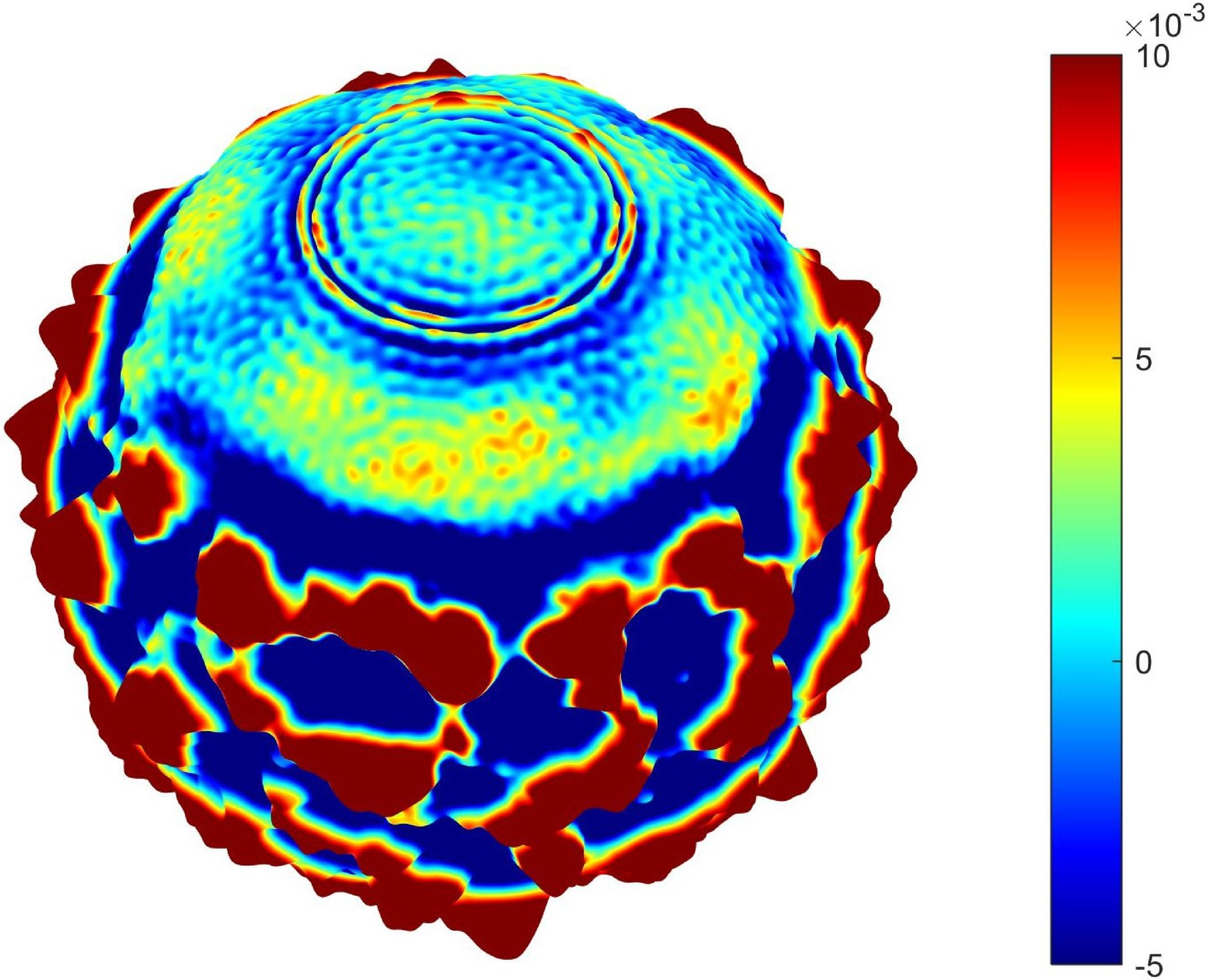}\\[1mm]
  \subcaption{Local needlet approximation error}\label{fig:GRFhump_locNeed7_error}
  \end{minipage}
  \hspace{0.02\textwidth}
  \begin{minipage}{0.485\textwidth}
  \centering
  \includegraphics[trim = 0mm 0mm 0mm 0mm, width=0.9\textwidth]{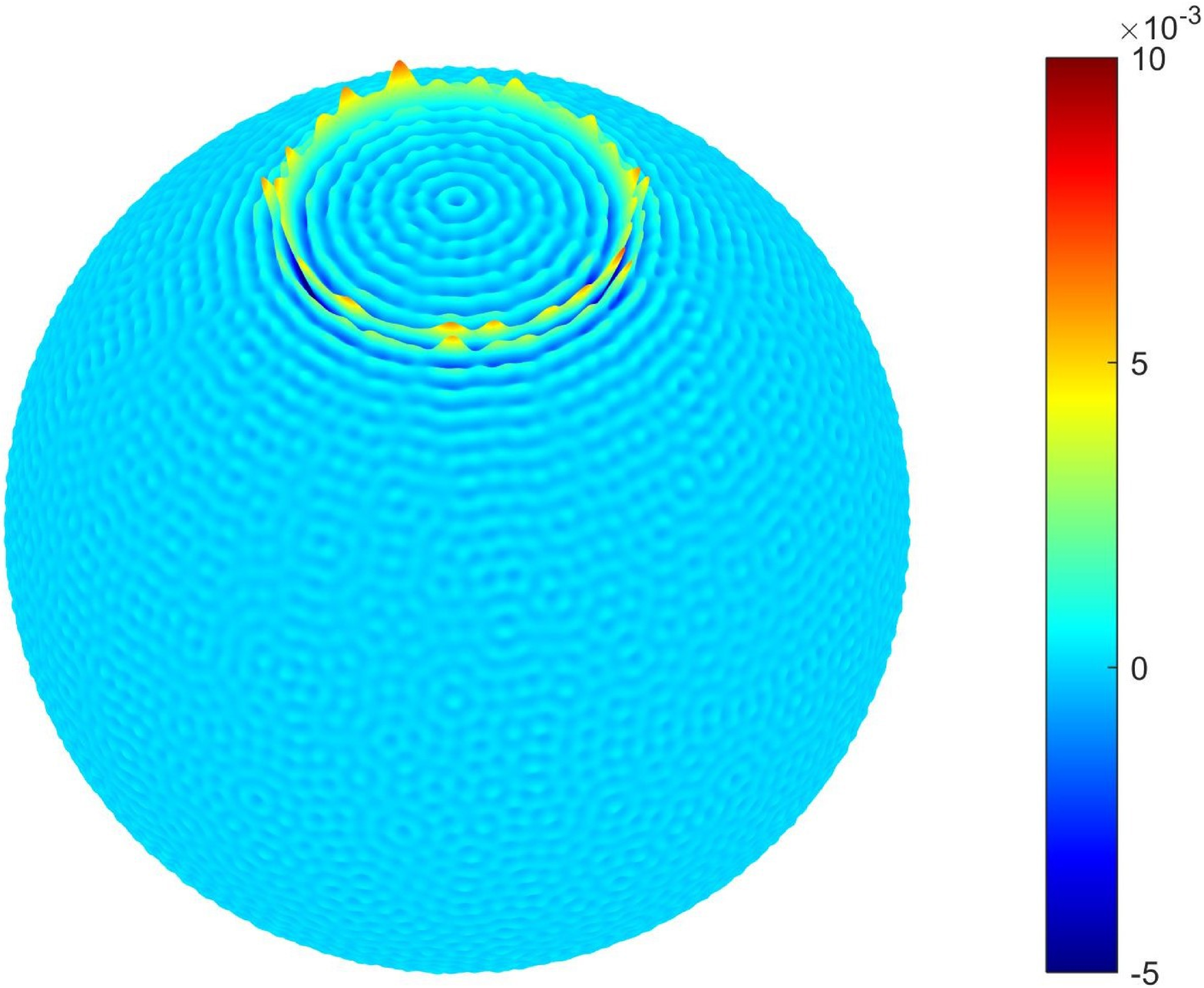}\\
  \subcaption{Hyperinterpolation approximation error}\label{fig:GRFhump_Hyper7_error}
  \end{minipage}
  \end{minipage}\\
  \caption{(a) Realisation of the random field $\RF_{\delta,M}^{(1)}$, $\delta=1$, $s=1.5$, $M=130$.
(b) Localised needlet approximation of (a) using all needlets for levels $0$ to $4$ and
using needlets whose centers are in the cap of radius $\pi/3$ for levels $5$ to $7$, with needlet filter $\fiN\in \CkR[5]$.
(c) Errors of the localised needlet approximation (b).
(d) Errors of the hyperinterpolation approximation $\hyper$ with degree $L=128$.}\label{fig:dN.SHA_GRFhump_J7}
\end{figure}

\hspace{4mm}
\appendix

\section*{Acknowledgements}
The authors thank Yoshihito Kazashi for helping to improve the probabilistic framework of the paper. This research includes extensive computations using the Linux computational cluster Katana supported by the Faculty of Science, UNSW Australia.

\bibliography{srfdisN}

\end{document}